\newcommand\datver[1]{\def\datverp
{\par\boxed{\boxed{\text{Version: #1; Run: \today}}}}}\datver{0.1}
\newcommand\rp{'}
\newcommand\arp[1]{\'{#1}}
\newcommand\dlp{``}
\newcommand\drp{''}
\newcommand\umlaut{\"}
\newcommand{\mfk}{\mathfrak}
\newcommand{\fa}{\mfk{A}}
\newcommand{\Prim}{\operatorname{Prim}}
\newcommand{\Ind}{\operatorname{Ind}}
\newcommand{\CC}{\mathbb C}
\newcommand{\HH}{\mathbb{H}}
\newcommand{\RR}{\mathbb R}
\newcommand{\ZZ}{\mathbb Z}
\newcommand{\CI}{{\mathcal C}^{\infty}}
\newcommand{\CIc}{{\mathcal C}^{\infty}_{\text{c}}}
\newcommand\pa{{\partial}}
\newcommand\oM{\overline{M}}
\newcommand{\ind}{\operatorname{ind}}
\newcommand{\ord}{\operatorname{ord}}
\newcommand{\Hom}{\operatorname{Hom}}
\newcommand{\Diff}{\operatorname{Diff}}
\newcommand{\Symb}{\operatorname{Symb}}
\newcommand{\coker}{\operatorname{coker}}
\newcommand{\cl}{\operatorname{cl}}
\newcommand{\vol}{\operatorname{vol}}
\renewcommand{\div}{\operatorname{div}}
\newcommand\pullback{\sp{\downarrow\downarrow}}
\newcommand{\HP}{\operatorname{HP}}
\newcommand\m[1]{${#1}$}
\newcommand\dm[1]{\begin{equation*}{#1}\end{equation*}}
\newcommand\Kond[2]{\maK\sp{#1}_{#2}}
\newcommand{\sh}[1]{#1^{\sharp}}
\newcommand{\maB}{\mathcal B}
\newcommand{\maC}{\mathcal C}
\newcommand{\maD}{\mathcal D}
\newcommand{\maF}{\mathcal F}
\newcommand{\maG}{\mathcal G}
\newcommand{\maI}{\mathcal I}
\newcommand{\maK}{\mathcal K}
\newcommand{\maP}{\mathcal P}
\newcommand{\maT}{\mathcal T}
\newcommand{\maV}{\mathcal V}
\newcommand{\maW}{\mathcal W}
\newcommand\Dir{\ \backslash \! \!\! \! D}
\newcommand\ede{ \, := \, }
\newtheorem{theorem}{Theorem}[section]
\newtheorem{proposition}[theorem]{Proposition}
\newtheorem{corollary}[theorem]{Corollary}
\newtheorem{lemma}[theorem]{Lemma}
\theoremstyle{definition}
\newtheorem{definition}[theorem]{Definition}
\theoremstyle{remark}
\newtheorem{remark}[theorem]{Remark}
\newtheorem{remarks}[theorem]{Remarks}
\newtheorem{example}[theorem]{Example}
\author[V. Nistor]{Victor Nistor} \address{Universit\'{e} de Lorraine,
  UFR MIM, Ile du Saulcy, CS 50128, 57045 METZ, France and
    Inst. Math. Romanian Acad.  PO BOX 1-764, 014700 Bucharest
   Romania} 
\email{victor.nistor@univ-lorraine.fr} 
\thanks{The author was partially supported by ANR-14-CE25-0012-01.\\
Manuscripts available from {\bf
  http:{\scriptsize//}www.math.univ-metz.fr{\scriptsize /} $\tilde{}$
  nistor{\scriptsize /}}\\
AMS Subject classification (2010): 58J40 (primary) 58H05, 46L80,
46L87, 47L80}
\date\today
\begin{document}

\title[Analysis on singular spaces]{Analysis on singular spaces: Lie manifolds and
  operator algebras}

\begin{abstract} 
We discuss and develop some connections between analysis on singular
spaces and operator algebras, as presented in my sequence of four
lectures at the conference {\em Noncommutative geometry and
  applications}, Frascati, Italy, June 16-21, 2014. Therefore this
paper is mostly a survey paper, but the presentation is new, and there
are included some new results as well. In particular, Sections
\ref{sec3} and \ref{sec4} provide a complete short introduction to
analysis on noncompact manifolds that is geared towards a class of
manifolds--called \dlp Lie manifolds\drp--that often appears in
practice. Our interest in Lie manifolds is due to the fact that they
provide the link between analysis on singular spaces and operator
algebras. The groupoids integrating Lie manifolds play an important
background role in establishing this link because they provide
operator algebras whose structure is often well understood. The
initial motivation for the work surveyed here--work that spans over
close to two decades--was to develop the index theory of stratified
singular spaces. Meanwhile, several other applications have emerged as
well, including applications to Partial Differential Equations and
Numerical Methods. These will be mentioned only briefly, however, due
to the lack of space. Instead, we shall concentrate on the
applications to Index theory.
\end{abstract}

\maketitle

\tableofcontents

\section*{Introduction}

We survey some connections between analysis on singular spaces and
operator algebras, concentrating on applications to Index theory.  The
paper follows rather closely my sequence of four lectures at the
conference {\em Noncommutative geometry and applications}, Frascati,
Italy, June 16-21, 2014. From a technical point of view, the paper
mostly sets up the analysis tools needed for developing a certain
approach to the Index theory of singular and non-compact spaces.
These results were developed by many people over several years. In
addition to these older contributions, we include here also several
new results tying various older results together. In particular, for
the benefit of the reader, we have written the paper in such a way
that the third and fourth sections are to a large extend
self-contained. They include most of the needed proofs, and thus can
be regarded as a very short introduction to analysis on non-compact
manifolds that focuses on applications to Lie manifolds. Lie manifolds
are a class of non-compact manifolds that arise naturally in many
applications involving non-compact and singular spaces.

The main story told by this paper is, briefly, as follows. Some of the
classical Analysis and Index theory results deal with the index of
Fredholm operators. This is rather well understood in the case of
smooth, compact manifolds and in the case of smooth, bounded
domains. By contrast, the non-smooth and non-compact cases are much
less well understood. Moreover, it has become clear that the index
theorems in these frameworks require non-local invariants and (hence)
cyclic homology. The full implementation of this program requires,
however, further algebraic and analytic developments. More
specifically, one important auxiliary question that needs to be
answered is:\ {\em \dlp Which operators on a given non-smooth or
  non-compact space are Fredholm.\drp} A convenient answer to this
question involves Lie manifolds and the Lie groupoids that integrate
them. The techniques that were developed for this purpose have then
proved to be useful also in other mathematical areas, such as Spectral
theory and the Finite Element Method.

Fredholm operators play a central role in this paper for the following
reasons. First of all, the (Fredholm) index is defined only for
Fredholm operators, thus, in order to state an index theorem for the
Fredholm index, one needs to have examples of Fredholm operators. In
fact, the data that is needed to decide whether a given operator is
Fredholm (principal symbol, boundary--or indicial--symbols) is also
the data that is used for the actual computation of the index of that
operator.  Moreover, many interesting quantities (such as the
signature of a compact manifold) are, in fact, the indices of certain
operators. Second, Fredholm operators have been widely used in Partial
Differential Equations (PDEs). For instance, non-linear maps whose
linearization is Fredholm play a central role in the study of
non-linear PDEs. Also, Fredholm operators are useful in determining
the essential spectra of Hamiltonians. Finally, the Fredholm index is
the first obstruction for an operator to be invertible. As an
illustration, this simple last observation is exploited in our
approach to the Neumann problem on polygonal domains (Theorem
\ref{thm.neumann}).  The proof of that theorem relies first on the
calculation of the index of an auxiliary operator, this auxiliary
operator is then shown to be injective, and the final step is to
augment its domain so that it becomes an isomorphism \cite{HMN}.

A certain point on the analysis on singular and non-compact spaces is
worth insisting upon. A typical approach to analysis on singular
spaces--employed also in this paper--is that the analysis on a
singular space happens on the {\em smooth} part of the space, with the
singularities playing the important role of providing the behavior
\dlp at infinity.\drp\ Thus, from this point of view, the analysis on
non-compact spaces is more general than the analysis on singular
spaces. However, for the simplicity of the presentation, we shall
usually discuss only singular spaces, with the understanding that the
results also extend to non-compact manifolds.

Here are the contents of the paper. The first section is devoted to
describing the Index theory motivation for the results presented in
this paper. The approach to Index theory used in this paper is based
on exact sequences of operator algebras.  Thus, in the first section,
we discuss the exact sequences appearing in the Atiyah-Singer index
theorem, in Connes\rp\ index theorem for foliations, and in the
Atiyah-Patodi-Singer (APS) index theorem. The second section is
devoted to explaining the motivation for the results presented in this
paper coming from degenerate (or singular) elliptic partial
differential equations. In that section, we just present some typical
examples of degenerate elliptic operators that suggest how ubiquitous
they are and point out some common structures that have lead to Lie
manifolds, a class of manifolds that is discussed in the third
section. In this third section, we include the definition of Lie
manifolds, a discussion of manifolds with cylindrical ends (the
simplest non-trivial example of a Lie manifold, the one that leads to
the APS framework), a discussion of Lie algebroids and of their
relation to Lie manifolds, and a discussion of the natural metric and
connection on a Lie manifold. The fourth section is a basic
introduction to analysis on Lie manifolds. It begins with discussions
of the needed functions spaces, of \dlp comparison algebras,\drp\ and
of Fredholm conditions. The last section is devoted to applications,
including the formulation of an index problem for Lie manifolds in
periodic cyclic cohomology, an application to essential spectra, an
index theorem for Callias-type operators, and the Hadamard well
posedness for the Poisson problem with Dirichlet boundary conditions
on polyhedral domains.

The four lectures of my presentation at the above mentioned conference
were devoted, each, to one of the following subjects: I. {\em Index
  theory}, II. {\em Lie manifolds}, III. {\em Pseudodifferential
  operators on groupoids,} and IV. {\em Applications}, and are based
mostly on my joint works with Bernd Ammann (Regensburg), Catarina
Carvalho (Lisbon), Alexandru Ionescu (Princeton), Robert Lauter
(Mainz), Anna Mazzucato (Penn State) and Bertrand Monthubert
(Toulouse). Nevertheless, I made an effort to put the results in
context by quoting and explaining other relevant results. I have also
included significant background results and definitions to make the
paper easier to read for non-specialists. I have also tried to
summarize some of the more recent developments. Unfortunately, the
growing size of the paper has finally prevented me from including more
information. Moreover, it was unpractical to provide all the related
references, and I apologize to the authors whose work has not been
mentioned enough.

I would like to thank the Max Planck Institute for Mathematics in
Bonn, where part of this work was completed. Also, I would like to
thank Bernd Ammann, Ingrid and Daniel Belti\c{t}\u{a}, Karsten Bohlen,
Claire Debord, Vladimir Georgescu, Marius M\u{a}ntoiu, Jean Renault,
Elmar Schrohe, and Georges Skandalis for useful comments.

\section{Motivation: Index Theory\label{sec1}}

This paper is devoted in large part to explaining some applications of
Lie manifolds and of their associated operator algebras to analysis on
singular and non-compact spaces. The initial motivation of this author
for studying analysis on singular and non-compact spaces (and hence
also for studying Lie manifolds) comes from Index theory. In this
section, I will describe this initial motivation, while in the next
section, I will provide further motivation coming from degenerate
partial differential equations. Thus, I will not attempt here to
provide a comprehensive introduction to Index Theory, but rather to
motivate the results and constructions introduced in this paper using
it. In particular, I will stress the important role that Fredholm
conditions play for index theorems. In fact, in our approach, both the
index theorem studied and the associated Fredholm conditions rely on
the {\em same} exact sequence discussed in general in the next
subsection. No results in this section are new.

\subsection{An abstract index theorem\label{ssec.abs.ind}}

An approach to Index theory is based on {\em exact sequences} of
algebras of operators. We shall thus consider an abstract exact
sequence
\begin{equation}\label{eq.abs.es}
	0 \to I \to A \to \Symb \to 0 \,,
\end{equation}
in which the algebras involved will be specified in each particular
application. The same exact sequence will be used to establish the
corresponding Fredholm conditions. Typically, \m{A} will be a suitable
{\em algebra of operators} that describes the analysis on a given
(class of) singular space(s). In our presentation, the algebra $A$
will be constructed using Lie algebroids and Lie groupoids. The choice
of the {\em ideal} \m{I} also depends on the particular application at
hand and is not necessarily determined by $A$. In fact, the analysis
on singular spaces distinguishes itself from the analysis on compact,
smooth manifolds in that there will be several reasonable choices for
the ideal $I$.

Often, in problems related to classical analysis--such as the ones
that involve the Fredholm index of operators--the ideal $I$ will be
contained in the ideal of compact operators $\maK$ (on some separable
Hilbert space). In fact, in most applications in this presentation, we
will have $I := A \cap \maK$. We insist, however, that this is not the
only legitimate choice, even if it is the most frequently used one. An
important other example is provided by taking $I$ to be the kernel of
the principal symbol map. As we will see below, in the case of
singular and non-compact spaces, the kernel of the principal symbol
map {\em does not} consist generally of compact operators. This is the
case in the analysis on covering spaces and on foliations, which also
lead naturally to von Neumann algebras \cite{ConnesFol, Jones,
  kordyukovFol, Voiculescu}.

If $I := A \cap \maK$ and \m{P \in A} has an invertible image in
\m{A/I} (that is, it is invertible modulo \m{I}), then the operator
$P$ is Fredholm and a natural and far reaching question to ask then is
to compute $\ind(P) := \dim \ker(P) - \dim \coker(P)$, the {\em
  Fredholm index of} $P$, defined as the difference of the dimensions
of the kernel and cokernel of $P$.  In any case, we see that in order
to formulate an index problem, we need criteria for the relevant
operators to be Fredholm, because it is the condition that $P$ be
Fredholm that guarantees that $\ker(P)$, the kernel of $P$, and
$\coker(P)$, the cokernel of $P$, are finite dimensional. This is also
related to the structure of the exact sequence \eqref{eq.abs.es}.

When the algebra $A$ of the exact sequence \eqref{eq.abs.es} is
defined using groupoids--as is the case in this presentation--then the
structure of the quotient algebra $\Symb := A/I$ is related to the
representation theory of the underlying groupoid. Unfortunately, we
will not have time to treat this important subject in detail, but we
will provide several references in the appropriate places.

The exact sequence \eqref{eq.abs.es} provides us with a boundary (or
index) map
\begin{equation}\label{eq.b.map}
  \pa \, :\, K_1\sp{alg}(\Symb) \, \to\, K_0\sp{alg}(I) \,,
\end{equation}
between {\em algebraic} $K$-theory groups, whose calculation will be
regarded as an index formula for the reasons explained in the
following subsections (see, for instance, Remark
\ref{rem.index}). Thus, in general, {\em the index} of an operator is
an element of a $K_0$ group, which explains why the usual index, which
is an integer, is called {\em the Fredholm index} in this paper.  In
case \m{A} and \m{I} are \m{C\sp{*}}-algebras, the boundary map $\pa$
descends to a map between the corresponding {\em topological}
$K$-theory groups. Moreover, we obtain also a map \m{\pa\rp :
  K_0(\Symb) \to K_1(I)}, acting also between {\em topological}
$K$-theory groups.  The maps \m{\pa} and $\pa\rp$ and the maps
obtained from the functoriality of (topological) \m{K}-groups, give
rises to a six-term exact sequence of \m{K}-groups \cite{Rordam,
  WeggeOlsen}. Unfortunately, often the \m{K}-groups are difficult to
compute, so we need to consider suitable dense subalgebras of
\m{C\sp{*}}-algebras and their cyclic homology (see, for instance,
Subsection \ref{ssec.foliations}).

We begin with a quick introduction to differential and
pseudodifferential operators needed to fix the notation and to
introduce some basic concepts. It is written to be accessible to
graduate students. We then discuss three basic index theorems and
their associated analysis (or exact sequences). These three index
theorems are: the Atiyah-Singer (AS) index theorem, Connes' index
theorem for foliations, and the Atiyah-Patodi-Singer (APS) index
theorem. We will see that, at least from the point of view adopted in
this presentation, Connes' and APS\rp\ frameworks extend the
Atiyah-Singer\rp s framework in complementary directions.

\subsection{Differential and pseudodifferential operators}

We now fix some notation and recall a few basic concepts. On
\m{\RR^n}, we consider the derivations (or, which is the same thing,
  {\em vector fields}) $\pa_j = \frac{\pa}{\pa x_j},$ \m{j=1, \ldots,
    n,} and form the differential monomials $\pa^\alpha :=
  \pa_1^{\alpha_1} \pa_2^{\alpha_2} \ldots \pa_n^{\alpha_n},$ $\alpha
  \in \ZZ_+^n.$ We let \m{|\alpha| := \alpha_1 + \alpha_2 + \ldots +
    \alpha_n \in \ZZ_+}.
A {\em differential operator} of order \m{m} on \m{\RR^n} is then an
operator $P : \CIc(\RR^n) \to \CIc(\RR^n)$ of the form
\begin{equation}\label{eq.def.diff}
	Pu \, = \, \sum_{|\alpha| \le m} a_\alpha \pa^\alpha u \, ,
\end{equation}
with \m{m} minimal with this property.  Sometimes $P : \CIc(\RR^n) \to
L\sp{2}(\RR^n)$, but in this paper we consider only operators $P$
having {\em smooth} coefficients $a_{\alpha}$.

It is easy, but important, to extend the above constructions to
systems of differential operators, in order to account for important
operators such as: vector Laplacians, elasticity, signature, Maxwell,
and many others. We then take $u = (u_1, \ldots , u_k) \in
\CIc(\RR^n)^k = \CIc(\RR^n; \RR^k)$ to be a smooth, compactly
supported section of the trivial vector bundle \m{\underline{\RR^k} =
  \RR^k \times \RR^n \to \RR^n} on \m{\RR^n} and we take $ a_\alpha
\in \CI(\RR^n; M_k(\RR)) $ to be a matrix valued function. Each
coefficient $a_{\alpha}$ is hence {\em an endomorphism} of the trivial
vector bundle \m{\underline{\RR^k}}. Then $P$ maps $\CIc(\RR^n;
\RR\sp{k})$ to $\CIc(\RR^n; \RR\sp{k}).$
Let \m{\Delta = - \pa_1^2 - \ldots - \pa_n^2 \ge 0} and \m{s \in
  \ZZ_+}. We denote as usual
\begin{equation*}
	H^s(\RR^n) \, := \, \{ u : \RR^n \to \CC, \ \pa^\alpha u \in
        L^2(\RR^n), \, |\alpha| \le s \, \} \, = \,
        \maD(\Delta^{s/2})\,.
\end{equation*}
As we will see below, both definitions above of Sobolev spaces extend
to the case of ``Lie manifolds.'' These definitions of Sobolev spaces
also extend immediately to vector valued functions and, if the
coefficients \m{a_\alpha} of \m{P} are bounded (together with enough
derivatives, more precisely, if \m{P \in W^{s, \infty}(\RR\sp{n})}),
then we obtain that \m{P} maps $H^s(\RR^n)$ to $H^{s-m}(\RR^n)$.

For \m{P} a differential operator of order \m{\le m} as in Equation
\eqref{eq.def.diff}, we let
\begin{equation}\label{eq.def.princs}
	\sigma_m(P)(x, \xi) \, = \ \sum_{|\alpha| = m} a_\alpha(x)
        (\imath \xi)^\alpha \, \in \, \CI(\RR^n \times \RR^n; M_k) \,,
\end{equation}
and call it the {\em principal symbol} of \m{P}. In particular, we have
\m{\sigma_{m+1}(P) = 0}. Here \m{x \in\RR^n} and \m{\xi \in \RR^n} is
the {\em dual} variable.

The fact that \m{\xi} is a dual variable to \m{x \in \RR^n} is
confirmed by the formula for transformations of coordinates. The
principal symbol is thus seen to be a function on \m{T^*\RR^n \simeq
  \RR^n \times \RR^n}. It turns out that the principal symbol
\m{\sigma_m(P)} of \m{P} has a much simpler transformation formula
than the {\em (full) symbol} $\overline{\sigma}(P)$ of \m{P} defined
by
\begin{equation}\label{eq.def.fulls}
	\overline{\sigma}(P)(x, \xi) \, = \ \sum_{|\alpha| \le m}
        a_\alpha(x) (\imath \xi)^\alpha \, \in \, \CI(\RR^n \times
        \RR^n; M_k) \,.
\end{equation}
The full symbol \m{p(x, \xi) := \overline{\sigma}(P)(x, \xi)} of \m{P}
defined as above in Equation \eqref{eq.def.fulls} is nevertheless
important because $P = p(x, D)$, where
\begin{equation}\label{eq.def.pseudodiff}
	p(x, D) u(x) \, := \, (2 \pi)^{-n} \int_{\RR^n} e^{\imath x
          \cdot \xi} p(x, \xi) \hat u(\xi) d\xi \,.
\end{equation}

There exist more general classes of functions (or symbols) \m{p} for
which \m{p(x, D)} can still be defined by the above formula
\eqref{eq.def.pseudodiff}. The resulting operator will be a {\em
  pseudodifferential operator} with symbol \m{p}.  Let us recall the
definition of the two most basic classes of symbols for which the
formula \eqref{eq.def.pseudodiff} defining \m{p(x, D)} still makes
sense \cite{hormander3, SchulzeBook91, RodinoBook}.  For simplicity,
we shall consider in the beginning only scalar symbols, although
matrix valued symbols can be handled in a completely similar way. The
first class of symbols $p$ for which the formula
\eqref{eq.def.pseudodiff} still makes sense is the class \m{S_{1,
    0}^m(\RR^k \times \RR^N)}, \m{m \in \RR}. It is defined as the
space of functions \m{a : \RR^{k +N} \to \CC} that satisfy, for any
\m{i, j \in \ZZ_{+}}, the estimate
\begin{equation*}
	| \pa_x ^i \pa_\xi^j a(x, \xi)| \, \le \, C_{i j} (1 +
        |\xi|)^{m-j} \,,
\end{equation*}
for a constant \m{C_{i j} > 0} independent of \m{x} and \m{\xi}.  Of
course, in \eqref{eq.def.pseudodiff}, we take $N = n$.

Let us now introduce {\em classical symbols}. A function \m{a : \RR^k
  \times \RR^N \to \CC} is called {\em eventually homogeneous} of
order \m{s} if there exists \m{M > 0} such that \m{a(x, t\xi) =
  t\sp{s} a(x, \xi)} for \m{|\xi| \ge M} and \m{t \ge 1}. A very
useful class of symbols is $S^m_{\cl}(\RR^{2n})$, defined as the
subspace of symbols $a \in S^m_{1, 0} (\RR^{2n})$ that can be written
as asymptotic series \m{a \sim \sum_{j=0}^\infty a_{m - j}}, meaning
\dm{a - \sum_{j=0}^N a_{m - j} \, \in \, S^{m-N-1}_{1, 0} (\RR^{2n})
  \,,} with \m{a_k \in S^k_{1, 0} (\RR^{2n})} {\em eventually
  homogeneous} of order \m{k}. If \m{a \in S^m_{\cl}(\RR^{2n})}, the
pseudodifferential operator \m{a(x, D)} is called a {\em classical
  pseudodifferential operator} and its principal symbol is defined by
\begin{equation}\label{eq.def.princs2}
	\sigma_m(a(x, D)) \, := \, a_m 
\end{equation}
and is regarded as a smooth, order \m{m} homogeneous function on
\begin{equation*}
  T^*\RR^n \smallsetminus \mbox{ \dlp zero section\drp} \, = \, \RR^{2n}
  \smallsetminus (\RR\sp{n} \times \{0\}) \,.
\end{equation*}
For Index theory, it is generally enough to consider classical
pseudodifferential operators. The reason is that the inverses and
parametrices of classical pseudodifferential operators are again
classical and, if \m{p(x, \xi) := \sum_{|\alpha| \le m} p_\alpha
  (\imath \xi)\sp{\alpha}} is a polynomial in $\xi$ and if we let \m{P
  := p(x, D) = \sum_{|\alpha| \le m} p_\alpha \pa^\alpha}, then \m{P}
is a {\em classical pseudodifferential operator} of order \m{m}.

The definition of a (pseudo)differential operator \m{P} (of order
\m{\le m}) and of its principal symbol \m{\sigma_m(P)} then extend to
manifolds and vector bundles by using local coordinate charts.  To fix
notation, if \m{E \to M} is a smooth vector bundle over a manifold
\m{M}, we shall denote by \m{\Gamma(M; E)} the space of its smooth
sections:
\begin{equation*}
 \Gamma(M; E) \, := \, \{s : M \to E,\, s(x) \in E_x \} \,.
\end{equation*}
Similarly, we shall denote by \m{\Gamma_c(M; E) \subset \Gamma(M; E)}
the subspace of smooth, compactly supported sections of $E$ over
$M$. Sometimes, when no confusion can arise, we denote \m{\Gamma(E) =
  \Gamma(M; E)} and, similarly, \m{\Gamma_c(E) = \Gamma_c(M; E)}.
Getting back to our extension of pseudodifferential operators to
manifolds, we obtain this extension by replacing as follows:
\begin{align*}
    \RR^n & \ \ \leftrightarrow \ \ \ M \, = \mbox{ a smooth manifold
    }\\
    \CIc(\RR^n)^k & \ \ \leftrightarrow \ \ \ \mbox{ sections of a
      vector bundle,}
\end{align*}
which gives for an order $m$ operator \m{P} acting between smooth,
compactly supported sections of \m{E} and \m{F}:
\begin{equation*}
  \begin{gathered}
    P \, : \, \Gamma_c(M; E) \, \to \, \Gamma_c(M; F)\\
    \sigma_m(P) \, \in \, \Gamma(T^*M \smallsetminus \{0\};\ \Hom(E,
    F)) \,.
\end{gathered}
\end{equation*}
Of course, \m{\sigma_m(P)} is homogeneous of order \m{m}. Thus, if
\m{m=0} and if we denote by \m{S\sp{*}M := (T\sp{*}M \smallsetminus
  \{0\})/\RR_{+}\sp{*}} the {\em (unit) cosphere bundle}, then
\m{\sigma_0(P)} identifies with a smooth function on
\m{S\sp{*}M}. (The name \dlp cosphere bundle\drp\ is due to the fact
that, if we choose a metric on \m{M}, then the cosphere bundle
identifies with the set of vectors of length one in \m{T\sp{*}M}.)

The main property of the principal symbol is the {\em multiplicative
  property}
\begin{equation}\label{eq.mult}
  \sigma_{m+m'}(PP') = \sigma_m(P) \sigma_{m'}(P') \, ,
\end{equation}
a property that is enjoyed by its extension to {\em pseudodifferential
  operators} (which are allowed to have negative and non-integer
orders as well).

\begin{definition}\ 
A (classical, pseudo)differential operator \m{P} is called {\em
  elliptic} if its principal symbol is invertible away from the zero
section of \m{T^*M}.
\end{definition}

See \cite{hormander3, plamenevskiiBook, RodinoBook, simanca} for a more complete
discussion of various classes of symbols and of pseudodifferential
operators. See also \cite{aln2, GilkeyBook, Mantoiu2,
  MelroseICM, parenti, SchulzeBook91}.

As a last ingredient before discussing the Fredholm index, we need to
extend the definition of Sobolev spaces to manifolds. To that end, we
consider also a {\em metric} \m{g} on our manifold \m{M} (or a
Lipschitz equivalence class of such metrics) \cite{Aubin,
  HebeyBook}. Then, for a complete manifold \m{M}, the Sobolev spaces
are given by the domains of the powers of the (positive) Laplacian. In
general, this will depend on the choice of the metric~\m{g}.

\subsection{The Fredholm index}

Let now \m{M} be a {\em compact}, smooth manifold, so the Sobolev
spaces \m{H^s(M)} are uniquely defined.  Let also \m{P} be a
(classical, pseudo) differential operator of order \m{\le m} acting
between the smooth sections of the hermitian vector bundles \m{E} and
\m{F}. We denote by \m{H\sp{s}(M; E)} and \m{H\sp{s}(M; F)} the
corresponding Sobolev spaces of sections of these bundles.

Recall that a continuous, linear operator \m{T : X \to Y} acting
between topological vector spaces is {\em Fredholm}, if and only if,
the vector spaces \m{\ker(P) := \{u \in X, Tu = 0 \} } and
\m{\coker(P) := Y/TX} are finite dimensional. One of our {\em model
  results} is then the following classical theorem
\cite{CordesHerman68, Seeley59, Seeley63}.

\begin{theorem}\label{thm.fc.as} 
Let $P$ be an order $m$ pseudodifferential operator acting between the
smooth sections of the bundles \m{E} and \m{F} on the smooth, compact
manifold $M$ and \m{s \in \RR}. Then
\begin{equation*}
  P : H^s(M; E) \to H^{s-m}(M; F) \mbox{ is Fredholm }
  \ \Leftrightarrow \ P \mbox{ is elliptic.}
\end{equation*}
\end{theorem}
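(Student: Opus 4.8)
The plan is to prove both implications of Theorem~\ref{thm.fc.as} by constructing a parametrix.

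\smallskip

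\textbf{The ``$\Leftarrow$'' direction (ellipticity implies Fredholm).} Suppose $P$ is elliptic of order $m$, so that $\sigma_m(P) \in \Gamma(T^*M \smallsetminus \{0\}; \Hom(E,F))$ is invertible. First I would construct a parametrix $Q$, i.e.\ a classical pseudodifferential operator of order $-m$ acting from $F$ to $E$ such that
\begin{equation*}
  QP - \operatorname{Id} \in \Psi^{-\infty}(M; E), \qquad PQ - \operatorname{Id} \in \Psi^{-\infty}(M; F) \,.
\end{equation*}
The construction proceeds by the standard symbol calculus: using the multiplicative property \eqref{eq.mult} of the principal symbol, one first sets $q_{-m} := \sigma_m(P)^{-1}$, which is a well-defined smooth homogeneous section of $\Hom(F,E)$ over $T^*M \smallsetminus \{0\}$ because $\sigma_m(P)$ is invertible there; one then corrects the lower-order terms of the full symbol inductively so that the asymptotic expansion of the symbol of $QP - \operatorname{Id}$ vanishes to all orders, and finally sums the asymptotic series (Borel's lemma). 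The key input is that a pseudodifferential operator of order $-\infty$ on a compact manifold is a smoothing operator, hence bounded $H^s(M; E) \to H^t(M; E)$ for all $s, t$, and by Rellich's lemma (compactness of the Sobolev embeddings $H^t \hookrightarrow H^s$ for $t > s$ on a compact manifold) it is a compact operator $H^s(M; E) \to H^s(M; E)$. Therefore $Q$, viewed as $H^{s-m}(M; F) \to H^s(M; E)$, is a two-sided inverse of $P : H^s(M; E) \to H^{s-m}(M; F)$ modulo compact operators; by Atkinson's theorem this makes $P$ Fredholm.

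\smallskip

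\textbf{The ``$\Rightarrow$'' direction (Fredholm implies ellipticity).} This I would prove by contraposition: if $P$ is \emph{not} elliptic, then $P : H^s(M; E) \to H^{s-m}(M; F)$ fails to be Fredholm. Pick a point $(x_0, \xi_0) \in T^*M$ with $\xi_0 \neq 0$ at which $\sigma_m(P)(x_0, \xi_0)$ is not invertible, say with a nonzero vector $v_0$ in its kernel (the cokernel case is analogous, or follows by passing to adjoints). The idea is to manufacture a sequence of approximate solutions concentrated near $(x_0, \xi_0)$ in phase space: in a local chart, take $u_\lambda(x) := \lambda^{n/4}\, \chi\!\left(\sqrt{\lambda}\,(x - x_0)\right) e^{\imath \lambda\, x \cdot \xi_0}\, v_0$ for a fixed cutoff $\chi \in \CIc$, and show (by a stationary-phase / symbol-estimate computation) that $\|u_\lambda\|_{H^s}$ stays bounded below and away from $0$ while $\|P u_\lambda\|_{H^{s-m}} \to 0$ as $\lambda \to \infty$; moreover the $u_\lambda$ can be chosen so that any subsequence converges weakly to $0$ in $H^s$. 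A bounded sequence with no norm-convergent subsequence that is killed in the limit by $P$ shows that $P$ cannot be bounded below modulo compacts, hence $P$ is not Fredholm. (Alternatively, one can invoke the general fact that on a compact manifold the principal symbol induces an isomorphism $\Psi^m / \Psi^{m-1} \cong \Gamma(S^*M; \Hom(E,F))$ and combine it with the first half: if $\sigma_m(P)$ were not invertible, no parametrix can exist, and one shows directly that a non-elliptic $P$ admits either an infinite-dimensional ``approximate kernel'' or ``approximate cokernel.'')

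\smallskip

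\textbf{Main obstacle.} The essentially formal part — the inductive parametrix construction and the asymptotic summation — is routine symbol calculus, and the compactness of $\Psi^{-\infty}$ on a compact manifold via Rellich is standard. The genuinely delicate step is the non-ellipticity $\Rightarrow$ non-Fredholm implication: one must carefully choose the coherent-state/concentrating family $u_\lambda$, verify the two-sided $H^s$-norm estimates with the correct powers of $\lambda$ so that the lower-order terms of $P$ (which contribute at order $\lambda^{m-1}$ relative to the leading $\lambda^m$) are genuinely negligible, and confirm the weak-null property that rules out compactness. Since the present paper is a survey and this is cited as a classical ``model result'' \cite{CordesHerman68, Seeley59, Seeley63}, I expect the write-up to sketch the parametrix construction in detail and refer to the literature for the sharp estimates behind the converse, or to deduce the converse from the symbol isomorphism $\Psi^m/\Psi^{m-1} \cong \Gamma(S^*M; \Hom(E,F))$ once it has been established.
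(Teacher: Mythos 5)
The paper offers no proof of this statement: it is quoted as a classical ``model result'' with references to \cite{CordesHerman68, Seeley59, Seeley63}, so there is no in-paper argument to compare yours against. Your proposal is the standard proof and is essentially correct. The parametrix construction for the direction ``elliptic $\Rightarrow$ Fredholm'' (invert the principal symbol, correct lower-order terms inductively, sum the asymptotic series, and use Rellich plus Atkinson) is exactly the classical route. For the converse, your quasimode argument is the right idea, but one normalization detail deserves attention: with $u_\lambda(x) = \lambda^{n/4}\chi(\sqrt{\lambda}(x-x_0))e^{\imath\lambda x\cdot\xi_0}v_0$ one has $\|u_\lambda\|_{L^2}\sim 1$ but $\|u_\lambda\|_{H^s}\sim\lambda^{s}$, so for $s\neq 0$ the sequence is not bounded in $H^s$ as written; you must either renormalize by $\lambda^{-s}$ or, more cleanly, reduce to the case $s=0$, $m=0$ by composing with the invertible elliptic operators $(1+\Delta)^{\pm t/2}$ before running the concentration argument. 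You flag that the powers of $\lambda$ need checking, so this is a point to be tightened rather than a gap. The contradiction with Fredholmness is then correctly obtained from the a priori estimate $\|u\|_{H^s}\le C(\|Pu\|_{H^{s-m}} + \|Ku\|)$ valid for some compact $K$ whenever $P$ is Fredholm, applied to the weakly null, norm-bounded-below family $u_\lambda$. Your alternative route via the symbol isomorphism $\Psi^m/\Psi^{m-1}\simeq \Gamma(S^*M;\Hom(E,F))$ also works, but it requires the additional fact that the principal symbol extends continuously to the norm closure of $\Psi^0$ and vanishes on the compacts, which is itself usually proved by the same oscillatory test functions; the direct quasimode argument is therefore the more self-contained choice.
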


Fredholm operators appear all the time in applications (because
elliptic operators are so fundamental). For instance, the theorem
mentioned above is one of the crucial ingredients in the \dlp Hodge
theory\drp\ for smooth compact manifolds, which is quite useful in
Gauge theory.

By the Open Mapping theorem, the invertibility of a continuous, linear
operator \m{P : X_1 \to X_2} acting between two Banach spaces is
equivalent to the condition \m{\dim \ker(P) = \dim \coker(P) = 0}. It
is important then to calculate the {\em Fredholm index} \m{\ind(P)} of
\m{P}, defined by
\begin{equation}
  \ind(P) \, := \, \dim \ker(P) -\dim \coker(P) \,.
\end{equation}
The reason for looking at the Fredholm index rather than looking
simply at the numbers \m{\dim \ker(P)} and \m{\dim \coker(P)} is that
\m{\ind(P)} has better stability properties than these numbers. For
instance, the Fredholm index is homotopy invariant and depends only on
the principal symbol of \m{P}.

\subsection{The Atiyah-Singer index formula}

The index of elliptic operators on smooth, compact manifolds is
computed by the Atiyah-Singer index formula \cite{AS3}:

\begin{theorem}[Atiyah-Singer] 
\label{thm.AS}
Let \m{M} be a compact, smooth manifold and let \m{P} be elliptic,
classical (pseudo)differential operator acting on sections of smooth
vector bundles on \m{M}.  Then
\begin{equation*}
  \ind(P) \, = \, \langle \, ch [\sigma_m(P)] \maT(M)\, ,\, [T^*M] \,
  \rangle \; .
\end{equation*}
\end{theorem}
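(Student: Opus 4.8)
The plan is to prove the Atiyah--Singer index formula by reducing it, via $K$-theory and the topological index, to two computations that can be checked directly. First I would observe that both sides of the claimed equality define homomorphisms from $K^0(T^*M)$ (with compact supports) to $\ZZ$: the left-hand side factors through the \emph{analytic index} $\operatorname{ind-a}\colon K^0(T^*M)\to\ZZ$, which by Theorem~\ref{thm.fc.as} is well defined on the class $[\sigma_m(P)]$ of the principal symbol of an elliptic operator and depends only on that class; the right-hand side is (up to the usual normalisation involving the Todd class of $TM\otimes\CC$) the \emph{topological index} $\operatorname{ind-t}\colon K^0(T^*M)\to\ZZ$ defined by pushing forward to a point through an embedding $M\hookrightarrow \RR^N$. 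So the theorem is equivalent to the assertion $\operatorname{ind-a}=\operatorname{ind-t}$ as maps on $K$-theory.

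The main step is then an axiomatic characterisation: one shows that $\operatorname{ind-a}$ and $\operatorname{ind-t}$ both satisfy (i) functoriality/naturality for open embeddings of manifolds, (ii) compatibility with the $K$-theory pushforward along a closed embedding $M\hookrightarrow M'$, and (iii) the normalisation $\operatorname{ind-a}=\operatorname{ind-t}=\mathrm{id}$ when $M$ is a point. A uniqueness lemma then forces $\operatorname{ind-a}=\operatorname{ind-t}$. For the topological index these properties are essentially built into its definition, using Bott periodicity and the Thom isomorphism to reduce to a point. For the analytic index, property (i) follows because a pseudodifferential operator supported in a coordinate chart can be transported; property (ii) is the serious analytic input and is proved by constructing, from an elliptic symbol on $T^*M$, an elliptic symbol on $T^*M'$ with the same analytic index, using the Thom class of the normal bundle of $M$ in $M'$ and multiplicativity of the symbol \eqref{eq.mult}; property (iii) is immediate since on a point every elliptic operator is an isomorphism of finite-dimensional spaces and its index is the difference of the dimensions, which reads off the virtual dimension of the $K$-theory class. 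Finally one embeds $M$ in a large Euclidean space and uses Bott periodicity on $\RR^N$ to identify $\operatorname{ind-t}$ with the explicit Chern character/Todd expression; combining with $\operatorname{ind-a}=\operatorname{ind-t}$ and the relation $\maT(M)=\operatorname{Td}(TM\otimes\CC)$ restricted to the cotangent bundle gives the stated cohomological formula $\operatorname{ind}(P)=\langle\operatorname{ch}[\sigma_m(P)]\,\maT(M),[T^*M]\rangle$.

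The hard part will be establishing the excision/multiplicativity property (ii) for the analytic index --- equivalently, that the analytic index is compatible with the $K$-theoretic Gysin map for a closed embedding. This requires genuine work in pseudodifferential analysis: given an elliptic operator on $M$, one must produce an elliptic operator on a tubular neighbourhood (and then on $M'$) whose symbol realises the Thom-isomorphism image of the original symbol, and one must verify that the index is unchanged. The classical route does this by an explicit family of operators on the fibres of the normal bundle (a Bott-type operator) and a careful analysis of how indices multiply; keeping track of the analysis here --- ellipticity of the product symbol, invariance of the index under the relevant homotopies, and the passage from the tubular neighbourhood back to $M'$ --- is the technical heart of the proof. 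Once that is in place, the remaining steps (the uniqueness lemma, Bott periodicity on $\RR^N$, and the conversion of the $K$-theory statement into the cohomological formula via the Chern character) are formal.
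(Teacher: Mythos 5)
The paper does not actually prove Theorem \ref{thm.AS}: it states the result, refers to \cite{AS3, GilkeyBook, palais, Seeley63, Troitsky} for proofs, and confines itself to explaining the ingredients and to recasting the index as the composite \m{Tr_* \circ \pa : K_1(\CI(S\sp{*}M)) \to \CC} coming from the exact sequence \eqref{eq.AS.es} (see Remark \ref{rem.index}). So there is no proof in the paper to compare yours against; what you have written is a (correct, standard) outline of the original $K$-theoretic argument of Atiyah--Singer: identify both sides as homomorphisms \m{K\sp{0}(T\sp{*}M) \to \ZZ}, characterize the index axiomatically by normalization on a point and compatibility with Gysin maps for embeddings, and convert the resulting $K$-theory identity into cohomology via the Chern character and Bott periodicity. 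That route is sound, and you correctly isolate the genuine analytic content (multiplicativity of the analytic index under the Thom/Bott construction for a closed embedding, using \eqref{eq.mult} and homotopy invariance of the index) --- though of course as written this step is only described, not carried out, so the proposal is a strategy rather than a proof. It is worth noting that the paper's own framework points to a different organization of the argument: there the index is the map \m{Tr_* \circ \pa} induced by the symbol exact sequence \eqref{eq.AS.es}, to be computed either by identifying the cyclic cocycle \m{\pa Tr} or via Connes' tangent groupoid (which the paper explicitly says ``may be more convenient'' for the AS case, in particular because it avoids the restriction to operators between isomorphic bundles). Your embedding/axiomatic route is the most economical way to reach the cohomological formula on a smooth compact manifold; the operator-algebraic route the paper favors buys generality, since it is the version that survives passage to foliations, manifolds with cylindrical ends, and Lie manifolds, where the symbol algebra is no longer commutative and the classical Chern character argument breaks down.
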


A suitable orientation has to be, of course, chosen on $T\sp{*}M$.
There are many accounts of this theorem, and we refer the reader for
instance to \cite{GilkeyBook, palais, Seeley63, Troitsky} for more
details. See \cite{ConnesBook, kordyukovFol} for an approach using
non-commutative geometry. Let us nevertheless mention some of the main
ingredients appearing in the statement of this theorem, because they
are being generalized (or need to be generalized) to the non-smooth
case. This generalization is in part achieved by non-commutative
geometry and by analysis on singular spaces. Thus, returning to
Theorem \ref{thm.AS}, the meanings of the undefined terms in Theorem
\ref{thm.AS} are as follows:
\begin{enumerate}[(i)]
\item The principal symbol \m{\sigma_m(P)} of \m{P} defines a
  \m{K}-theory class in \m{K\sp{0}(T\sp{*}M)} (with compact supports)
  by the ellipticity of \m{P} \cite{AS3} and \m{ ch [\sigma_m(P)] \in
    H_c^{even}(T^*M)} is the Chern character of this class.

\item \m{\maT(M) \in H\sp{even}(M) \simeq H\sp{even}(T\sp*M)} is the
  Todd class of \m{M}, so the product \m{ch [\sigma_m(P)] \maT(M)} is
  well-defined in \m{H_c^{even}(T^*M)}.

\item \m{[T^*M] \in H_c^{even}(T^*M)\rp} is the fundamental class of
  \m{T^*M} and is chosen such that no sign appears in the index
  formula.
\end{enumerate}

The AS index formula was much studied and has found a number of
applications. It is based on earlier work of Grothendieck and
Hirzebruch and answers to a question of Gelfand.  One of the main
motivations for the work presented here is the desire to extend the
index formula for compact manifolds (the AS index formula) to the
noncompact and singular cases. To this end, it will be convenient to
use the exact sequence formalism described in Subsection
\ref{ssec.abs.ind}. Namely, the exact sequence \eqref{eq.abs.es}
corresponding to the AS index formula is
\begin{equation}\label{eq.AS.es}
  0 \to \Psi\sp{-1}(M) \to \Psi\sp{0}(M) \to \CI(S\sp*M) \to 0\,,
\end{equation}
where \m{S\sp*M} is the cosphere bundle of \m{M}, as before, (that is,
the set of vectors of length one of the cotangent space \m{T\sp*M} of
\m{M}). That is, in the exact sequence \eqref{eq.abs.es}, we have \m{I
  = \Psi\sp{-1}(M)}, \m{A = \Psi\sp{0}(M)}, and \m{\Symb:=A/I \simeq
  \CI(S\sp*M)}.

It is interesting to point out that both the AS index formula and the
Fredholm condition of Theorem \ref{thm.fc.as} are based on the exact
sequence \eqref{eq.AS.es}.  Of course, to actually determine the
index, one has to do additional work, but the information needed is
contained in the exact sequence.  This remains true for most of the
other index theorems. We continue with some remarks.

\begin{remark}\label{rem.index}
Let us see now how the exact sequence \eqref{eq.AS.es} and Theorem
\ref{thm.AS} are related. Recall the boundary map \m{\pa :
  K_1\sp{alg}(\Symb) \to K_0\sp{alg}(I)} in {\em algebraic}
\m{K}-theory associated to the exact sequence \eqref{eq.abs.es}, see
Equation \eqref{eq.b.map}, and let us assume that the ideal \m{I} of
that exact sequence consists of compact operators (i.e. \m{I \subset
  \maK}). We first consider the natural map
\begin{equation}\label{eq.Tr}
	Tr_* \, : \, K_0\sp{alg}(I) \, \to \, \ZZ \,,
\end{equation}
where the trace refers to the trace (or dimension) of a projection.
We have, of course, that \m{Tr_* : K_0\sp{alg}(\maK) = K_0(\maK) \to
  \ZZ} is the usual isomorphism.  Then \m{Tr_* \circ \pa} {\em
  computes the usual (Fredholm) index}, that is, we have the equality
of the morphisms
\begin{equation}\label{eq.ch.it}
	\ind \, = \, Tr_* \circ \pa \, :\ K_1\sp{alg}(\Symb) \,
        \stackrel{\pa}{\longrightarrow} \, K_0\sp{alg}(I) \,
        \stackrel{Tr_*}{-\!\!\!\longrightarrow} \, \CC \,.
\end{equation}
Indeed, if \m{P \in A} is invertible in $A/I = \Symb$, then, on the
one hand, \m{P} defines a class \m{[P] \in K_1(\Symb)}, and, on the
other hand, \m{P} is {\em Fredholm} and its Fredholm index is given by
\begin{equation}
 \ind(P) \, = \, Tr_* \circ \pa[P] \,.
\end{equation}
\end{remark}

We thus see that computing the index of a Fredholm
(pseudo)differential operator on \m{M} is equivalent to computing the
composite map \m{Tr_* \circ \pa \, :\ K_1(\Symb) \to \CC}. This
observation due to Connes is the starting point of the approach to
index theorems described in this paper.

\begin{remark} Let us discuss now shortly the role of the
Chern character in the AS index formula. First, let us recall that the
Chern character establishes an isomorphism \m{ch: K\sp*(M_1) \otimes
  \CC \to H\sp*(M_1) \otimes \CC} for any compact, smooth manifold
\m{M_1}. Moreover, in the case of the commutative algebra
\m{\CI(M_1)}, we have that \m{K_*(\CI(M_1)) \simeq K\sp{*}(M_1)} and
hence any group morphism \m{K_*(\CI(M_1)) \to \CC} factors through the
Chern character \m{ch : K\sp{*}(M_1) \to H\sp{*}(M_1) \otimes \CC}.
Returning to the AS index formula, we have that \m{\Symb = \CI(S^*M)}
and hence the index map $\ind = Tr_* \circ \pa : K_1(\Symb) \simeq
K\sp1(S\sp*M) \to \CC$ can be expressed solely in terms of the Chern
character. It is therefore possible to express the AS Index Formula
purely {\em in classical terms} (vector bundles and cohomology)
because the quotient \m{A/I := \Symb \simeq \CI(S^*M)} {\em is
  commutative.}
\end{remark}

\begin{remark}
Technically, one may have to replace the algebra \m{A} with \m{M_n(A)}
and take \m{P \in M_n(A)}, but this is not an issue since the
$K$-groups (both topological and algebraic) are invariant for the
replacement of \m{A} with its matrix algebras. However, the approach
to the index of elliptic (pseudo)differential operators using exact
sequences can be used to deal with operators \m{P} acting between
sections of {\em isomorphic} bundles. For non-compact manifolds (and
hence also for singular spaces), this is enough. For the AS index
formula, however, one may have to replace first \m{M} with \m{M \times
  S\sp{1}}. For this reason, in the case of the AS index formula,
Connes\rp\ approach using the tangent groupoid may be more convenient.
\end{remark}

\begin{remark}\label{rem.AS}
  Elliptic operators on a smooth, compact manifolds $M$ have certain
  properties that are similar to the properties to $\Gamma$-invariant
  elliptic operators on a covering space $\Gamma \to \tilde M \to M$
  (so here $\Gamma$ is the group of deck transformations of $\tilde M$
  and hence $\tilde M/\Gamma \simeq M$). The reason is that they
  correspond the the {\em same} Lie algebroid on $M$, namely $TM \to
  M$. The two frameworks correspond however to {\em different} Lie
  groupoids, and their analysis is consequently also quite
  different. In particular, if $\Gamma$ is non-trivial, one is lead to
  consider {\em von Neumann algebras} \cite{ConnesFol, Jones,
    Voiculescu}. This is related to the example of foliations
  discussed in the next subsection.
\end{remark}

\subsection{Cyclic homology and Connes' index formula for foliations}
\label{ssec.foliations}
The map \m{Tr_*} of the basic equation \m{\ind(P) = Tr_* \circ \pa[P]}
(recall Equation \eqref{eq.ch.it}, which is valid when \m{I \subset
  \maK}), is a particular instance of the pairing between {\em cyclic
  cohomology} and \m{K}-theory \cite{ConnesBook}. See also
\cite{ConnesNCG, Karoubi, LodayQuillen, Manin1, Tsygan}. This pairing
is even more important when \m{I \not \subset \maK}.  Let us explain
this.  Let us denote by \m{\HP^*(B)} the {\em periodic cyclic
  cohomology} groups of an algebra \m{B} (for topological algebras,
suitable topological versions of these groups have to be considered).

Let us look again at the general exact sequence of Equation
\eqref{eq.abs.es} and let \m{\phi} be a cyclic cocycle on \m{I}, that
is, \m{\phi \in \HP^0(I)}. A more general (higher) index theorem is
then to compute
\begin{equation*}
	\phi_* \circ \pa \, : \, K_1(\Symb) \, \to \, \CC \,.
\end{equation*}
It is known that \m{\phi_* \circ \pa = (\pa \phi)_*}, and hence the
map \m{\phi_* \circ \pa} is also given by a cyclic cocycle
\cite{NistorDocumenta}.

The map \m{\phi_*} and, in general, the approach to Index theory using
cyclic homology is especially useful for {\em foliations} for the
reasons that we are explaining now. We regard a foliation \m{(M,
  \maF)} of a smooth, compact manifold \m{M} as a sub-bundle \m{\maF
  \subset TM} that is {\em integrable} (that is, its space of smooth
sections, denoted \m{\Gamma(\maF)}, is closed under the Lie bracket).
Connes' construction of pseudodifferential operators along the leaves
of a foliation \cite{ConnesFol} then yields the exact sequence of
algebras
\begin{equation}\label{eq.Connes.es}
	0 \to \Psi_{\maF}^{-1}(M) \to \Psi_{\maF}^{0}(M)
        \stackrel{\sigma_0}{\longrightarrow} \CI(S^*\maF) \to 0 \,,
\end{equation}  
where \m{\sigma_0} is again the principal symbol, defined essentially
in the same manner as for the case of smooth manifolds. In fact, for
\m{\maF = TM}, with \m{M} a smooth, compact manifold, this exact
sequence reduces to the earlier exact sequence \eqref{eq.AS.es}. It
also yields a boundary (or index) map
\begin{equation*}
	\pa \, : \, K_1(\CI(S^*\maF)) = K^1(S^*\maF) \, \to \,
        K_0(\Psi_{\maF}^{-1}(M)) \simeq K_0(\CIc(\maF))\,,
\end{equation*} 
where \m{\CIc(\maF)} is the convolution algebra of the groupoid
associated to \m{\maF} and where the topological $K$-groups were used.
A main difficulty here is that there are few calculations of
\m{K_0(\Psi_{\maF}^{-1}(M))}.  These calculations are related to the
Baum-Connes conjecture, which is however known not to be true for
general foliations, see \cite{SkandalisFolBC} and the references
therein. See also \cite{PV1, PV2}. Another feature of the foliation
case is that, unlike our other examples to follow,
\m{\Psi_{\maF}^{-1}(M)} has no canonical proper ideals, so there are
no other index maps.

Unlike its \m{K}-theory, the cyclic homology of
\m{\Psi_{\maF}^{-1}(M)} is much better understood, in particular, it
contains as a direct summand the twisted cohomology of the classifying
space of the groupoid (graph) of the foliaton \cite{BrylinskiNistor}.
We thus have a large set of linearly independent cyclic cocycles and
hence many linear maps $ \phi_* : K_0(\CIc(\maF)) \to \CC \,,$ each of
which defines an index map
\begin{equation*}
	\phi_* \circ \pa : K_0(\CI(S^*\maF)) \to \CC \,.
\end{equation*} 

We will not pursue further the determination of \m{\phi_* \circ \pa },
but we note Connes' results in \cite{ConnesBook, ConnesFol,
  kordyukovFol}, the results of Benameur--Heitsch for Haeffliger
homology \cite{BenameurHeitsch}, the results of Connes--Skandalis
\cite{ConnesSkandalis}, and of myself for foliated bundles
\cite{NistorFol}. We also stress that in the case of foliations, it
is the ideal \m{I} that causes difficulties, whereas the quotient
\m{\Symb := A/I} is commutative and, hence, relatively easy to deal
with. The opposite will be the case in the following subsection.

\subsection{The Atiyah-Patodi-Singer index formula}
A related but different type of example is provided by the
Atiyah-Patodi-Singer (APS) index formulas \cite{APS1}. Let \m{\oM} be
an $n$-dimensional compact manifold with {\em smooth boundary} \m{\pa
  \oM}. By definition, this means that \m{\oM} is locally
diffeomorphic to an open subset of \m{[0, 1) \times \RR\sp{n-1}}. The
  transition functions for a manifold with boundary will be assumed be
  smooth. To \m{\oM} we attach the semi-infinite cylinder
\begin{equation*}
	\pa \oM \times (-\infty, 0\,] \,,
\end{equation*}
yielding a {\em manifold with cylindrical ends}. The metric is taken
to be a product metric \m{g = g_{\pa \oM} + dt^2} far on the end.
Kondratiev's transform $r = e^t$ then maps the cylindrical end to a
tubular neighborhood of the boundary, such that the cylindrical end
metric becomes $g = g_{\pa \oM} + (r^{-1}dr)^2$ near the boundary,
since \m{r^{-1}dr = e\sp{-t} d(e\sp{t}) = dt}. Thus, on \m{\oM}, we
consider two metrics: first, the initial, everywhere smooth metric
(including up to the boundary) and, second, the modified, singular
metric \m{g} that corresponds to the compactification of the
cylindrical end manifold.

We have thus obtained the simplest examples of a non-compact manifold,
that of a {\em manifold with cylindrical ends.} We will consider on
this non-compact manifold only differential operators with
coefficients that extend to smooth functions up to infinity (so, in
particular, they have {\em limits} at infinity).  Because of this, it
will be more convenient to work on \m{\oM} than on \m{\oM \cup \pa \oM
  \times (-\infty, 0]}. This is achieved by the Kondratiev transform
  \m{r = e\sp{t}}. The Kondratiev transform is such that \m{\pa_t}
  becomes \m{r \pa_r}. On \m{\oM}, we then take the coefficients to be
  smooth functions up to the boundary. Therefore, in local coordinates
  \m{(r, x') \in [0, \epsilon) \times \pa \oM} on the distinguished
    tubular neighborhood of \m{\pa \oM}, we obtain the following form
    for our differential operators (here \m{n = \dim(\oM)}):
\begin{equation}\label{eq.def.totchar}
	P \, = \, \sum_{|\alpha| \le m} \, a_{\alpha}(r, x') (r
        \pa_r)^{\alpha_1} \pa_{x'_2}^{\alpha_2} \ldots
        \pa_{x'_n}^{\alpha_n} \, = \, \sum_{|\alpha| \le m} \,
        a_{\alpha} (r \pa_r)^{\alpha_1} \pa^{\alpha'} \,.
\end{equation}
Operators of this form are called {\em totally characteristic
  differential operators.}

Away from the boundary, the definition of the principal symbol for a
totally characteristic differential operator is unchanged. However, in
the same local coordinates near the boundary as in Equation
\eqref{eq.def.totchar}, the {\em principal symbol} for the totally
characteristic differential operator of Equation
\eqref{eq.def.totchar} is
\begin{equation}\label{eq.def.princ}
	\sigma_m(P) \, := \, \sum_{|\alpha| = m} a_{\alpha} \xi^\alpha
        \,.
\end{equation}
Thus the principal symbols is {\em not} \m{\sum_{|\alpha| = m}
  a_{\alpha} r^{\alpha_1} \xi^\alpha} as one might first think! Other
than the fact that this definition of the principal symbol gives the
\dlp right results,\drp\ it can be motivated by considering the
original coordinates \m{(t, x\rp) \in (-\infty, 0] \times \pa \oM} on
  the cylindrical end.

The principal symbol is something that was encountered in the
classical case of the AS-index formula as well as in the case of
foliations, so it is not something significantly new in the case of
manifolds with cylindrical ends--even if in that case the definition
is slightly different. However, in the case of manifolds with
cylindrical ends, there is another significant new ingredient, which
will turn out to be both crucial and typical in the analysis on
singular spaces. This significant new ingredient is the {\em indicial
  family} of a totally characteristic differential operator. To define
and discuss the indicial family of a totally characteristic
differential operator, let \m{P} be as in Equation
\eqref{eq.def.totchar} and consider the same local coordinates near
the boundary as in that equation. The definition of the indicial
family \m{\widehat{P}} of \m{P} is then as follows (we
\underline{underline} the most significant new ingredients of the
definition):
\begin{equation}\label{eq.def.indicial}
	\widehat P(\tau) \, := \, \sum_{|\alpha| \le m} a_{\alpha}(\,
        \underline{0}\, , x') \underline{(\imath \tau)\,
        }^{\alpha_1}\, \pa^{\alpha'} \,.
\end{equation}
Note that \m{\widehat{P}(\tau)} is a family of differential operators
on \m{\pa \oM} that depends on the coefficients of \m{P} only through
their restrictions to the boundary. Moreover, we see that the indicial
family \m{\widehat{P}(\tau)} of \m{P} is the Fourier transform of the
operator
\begin{equation}\label{eq.def.maI}
	\maI (P) \, := \, \sum_{|\alpha| \le m} a_{\alpha}(\,
        \underline{0}\, , x') \underline{\pa_t\, }^{\alpha_1}\,
        \pa^{\alpha'} \,,
\end{equation}
which is a translation invariant operator on \m{\pa \oM \times \RR}.
The operator \m{\maI(P)} is called the {\em indicial operator} of
\m{P} \cite{Lesch, Schulze}. Note that we have denoted $\pa^{\alpha} =
\pa_t^{\alpha_1} \pa^{\alpha'}$.

We are interested in Fredholm conditions for totally characteristic
differential operators, so let us introduce the last ingredient for
the Fredholm conditions. Let us endow \m{M := \oM \smallsetminus \pa
  \oM} with a cylindrical end metric. Since the cylindrical end metric
is complete, the Laplacian \m{\Delta} is self-adjoint, and hence we
can define the Sobolev space \m{H^s(M)} as the domain of
\m{\Delta^{s/2}}, that is, \m{H^s(M) := \maD(\Delta^{s/2})}, which
turns out to be independent of the choice of the cylindrical end
metric. (We consider all differential operators to be defined on their
minimal domain, unless otherwise mentioned, and thus, in particular,
they give rise to closed, densely defined operators.) We then have a
characterization of Fredholm totally characteristic differential
operators similar to the compact case (the differences to the compact
case are \underline{underlined}).

\begin{theorem}\label{thm.fc.aps}
Assume \m{M} has cylindrical ends and \m{P} is a totally
characteristic differential operator of order $m$ acting between the
sections of the bundles \m{E} and \m{F}.  Then, for any fixed $s \in
\RR$, we have that
\begin{multline*}
  P : H^s(M; E) \to H^{s-m}(M; F) \mbox{ is Fredholm } \\
  \Leftrightarrow \ P \mbox{ is elliptic } \underline{\mbox{and }
    \widehat{P}(\tau) \mbox{ is invertible } \forall \tau \in \RR }\,.
\end{multline*}
\end{theorem}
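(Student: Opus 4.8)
The plan is to reduce the Fredholm problem on the non-compact manifold $M$ with cylindrical ends to two model problems: the local (interior) problem, which is governed by the principal symbol $\sigma_m(P)$, and the problem "at infinity", which is governed by the indicial operator $\maI(P)$ (equivalently, by the indicial family $\widehat P(\tau)$). The strategy for the forward implication ($\Leftarrow$) is to construct a parametrix for $P$ by patching together a local interior parametrix with a parametrix for the translation-invariant operator $\maI(P)$ on the cylinder $\pa\oM \times \RR$. First I would fix a cutoff function $\chi$ supported in the distinguished tubular neighborhood $(r,x')\in[0,\epsilon)\times\pa\oM$ of the boundary, and a complementary cutoff $\psi = 1-\chi$ supported away from the boundary, so that $M = \{\psi \ne 0\} \cup \{\chi \ne 0\}$. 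On the region where $\psi \ne 0$, i.e. away from infinity, $P$ is an ordinary elliptic (pseudo)differential operator on a relatively compact piece of a smooth manifold, and ellipticity of $\sigma_m(P)$ produces a local parametrix $Q_{\mathrm{int}}$ with $Q_{\mathrm{int}} P - 1$ and $P Q_{\mathrm{int}} - 1$ smoothing (hence compact on the relevant Sobolev spaces, once we localize with the cutoffs).

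The essential new work is on the cylindrical end. There, after the Kondratiev transform $r = e^t$ so that $r\pa_r = \pa_t$, the operator $P$ is, up to lower-order terms that decay as $t \to -\infty$, equal to its indicial operator $\maI(P) = \sum_{|\alpha|\le m} a_\alpha(0,x')\,\pa_t^{\alpha_1}\pa^{\alpha'}$, a translation-invariant operator on $\pa\oM \times \RR$. Fourier transforming in $t$ turns $\maI(P)$ into the multiplication operator $\tau \mapsto \widehat P(\tau)$, acting on sections of $E,F$ over the compact manifold $\pa\oM$. For each fixed $\tau$, $\widehat P(\tau)$ is an order-$m$ elliptic operator on the compact manifold $\pa\oM$ (its principal symbol in the $x'$ variables agrees with that of $P$ restricted to the boundary), so by Theorem~\ref{thm.fc.as} it is Fredholm; the hypothesis that $\widehat P(\tau)$ is \emph{invertible} for all real $\tau$, together with an estimate uniform in $\tau$ as $|\tau| \to \infty$ coming from ellipticity, lets us invert $\widehat P(\tau)$ with a norm bound uniform in $\tau \in \RR$. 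Transforming back, $\maI(P)$ is invertible from $H^s(\pa\oM\times\RR;E)$ to $H^{s-m}(\pa\oM\times\RR;F)$, and this inverse $\maI(P)^{-1}$ serves as the end parametrix $Q_{\mathrm{end}}$.

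I would then set $Q := \psi_1 Q_{\mathrm{int}} \psi + \chi_1 Q_{\mathrm{end}} \chi$ with suitable slightly larger cutoffs $\psi_1,\chi_1$ equal to $1$ on the supports of $\psi,\chi$. Computing $QP - 1$ and $PQ - 1$, the principal terms cancel by construction; the remainders consist of (a) commutators of $P$ with the cutoffs, which are supported in the overlap region $\{\chi\psi \ne 0\}$, a relatively compact set, hence give compact operators, and (b) the difference $(P - \maI(P))\chi$ on the end, whose coefficients $a_\alpha(r,x') - a_\alpha(0,x')$ vanish at $r=0$ and decay exponentially in $t$; this forces the corresponding operator to be compact on the weighted/unweighted Sobolev spaces (a Rellich-type argument: coefficient $\to 0$ at infinity times an order-$m$ operator, composed with $Q_{\mathrm{end}}$ of order $-m$, is a compact operator). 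Thus $Q$ is a two-sided parametrix modulo compacts, so $P$ is Fredholm. For the converse ($\Rightarrow$), if $P$ is Fredholm then its principal symbol must be invertible (this is already the compact-case necessity, applied locally in the interior), giving ellipticity; and if some $\widehat P(\tau_0)$ were non-invertible one constructs, by translating and rescaling approximate null sequences of $\widehat P(\tau_0)$ along the cylinder, a non-compact sequence witnessing that $P$ cannot be Fredholm (a Weyl-sequence / singular-sequence argument exploiting translation invariance of $\maI(P)$ at infinity). The main obstacle I expect is item (b): proving the \emph{uniform}-in-$\tau$ invertibility of $\widehat P(\tau)$ on $\RR$ and the compactness of the perturbation term $(P-\maI(P))\chi$ on the exact Sobolev scale $H^s = \maD(\Delta^{s/2})$, since one must control the mismatch between the metric Sobolev spaces and the "flat" cylinder Sobolev spaces and verify the decay estimates are strong enough to beat the order of the operator.
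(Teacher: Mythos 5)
Your overall strategy (interior parametrix from ellipticity, inversion of \m{\maI(P)} on the cylinder via the Fourier transform and parameter-ellipticity, patching with cutoffs, singular sequences for the converse) is the classical route of Lockhart--McOwen and Melrose--Mendoza, and is genuinely different from the paper's, which never builds a parametrix by hand: the paper deduces the statement from the exact sequence \eqref{eq.APS.es} for the algebra \m{\Psi_b^0(\oM)} (equivalently from Theorem \ref{thm.fc.aps2}, and in the general setting from Theorem \ref{thm.fc.Lie} via groupoid \m{C\sp{*}}-algebras and exhausting families of representations), citing the literature for the analytic input. However, your argument has a genuine gap at exactly the point you flagged as delicate, namely step (b). The error term \m{(P-\maI(P))\chi_1 Q_{\mathrm{end}}\chi} is \emph{not} compact: its coefficients are \m{O(r)=O(e\sp{t})} near the boundary, but the top-order part is (a decaying function) times (an order-zero, non-decaying operator), and multiplication by a function vanishing at infinity composed with a non-compact order-zero operator is not compact --- the model being \m{f(x)g(D)} on \m{L\sp{2}}, which is compact when \emph{both} \m{f} and \m{g} vanish at infinity but not when only \m{f} does. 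This is precisely why the compact ideal in the exact sequence \eqref{eq.APS.es} is \m{r\Psi_b^{-1}(\oM)} and not the larger space \m{r\Psi_b^{0}(\oM)}: your remainder lives in \m{r\Psi_b^{0}}, which contains non-compact operators (e.g.\ multiplication by \m{r} itself on the cylindrical-end \m{L\sp{2}}). The Rellich-type argument you invoke does dispose of the terms of order \m{<m} in \m{P-\maI(P)} (decay times strictly negative order is compact), but not of the principal part.

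The gap is repairable without changing your architecture. The cleanest fix is a smallness argument: since the offending coefficients are \m{O(r)}, shrinking the width \m{\epsilon} of the collar on which \m{\chi,\chi_1} are supported makes the operator norm of \m{(P-\maI(P))\chi_1 Q_{\mathrm{end}}\chi} less than \m{1}; then \m{PQ = 1 + S + K} with \m{\|S\|<1} and \m{K} compact, so \m{PQ(1+S)\sp{-1} = 1 + K(1+S)\sp{-1}} exhibits a right inverse of \m{P} modulo \m{\maK} (and symmetrically on the left). Alternatively one corrects the parametrix iteratively inside the b-calculus until the remainder lies in \m{r\Psi_b^{-1}\subset\maK}. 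You should also state explicitly that the uniform invertibility of \m{\widehat P(\tau)} for large \m{|\tau|} is \emph{ellipticity with parameter}: it is the invertibility of \m{\sum_{|\alpha|=m}a_\alpha(0,x')(\imath\tau)^{\alpha_1}(\imath\xi')^{\alpha'}} for \m{(\tau,\xi')\neq 0}, i.e.\ the boundary restriction of the b-principal symbol of Equation \eqref{eq.def.princ}, which is where the convention that the symbol is \m{\sum a_\alpha\xi^\alpha} rather than \m{\sum a_\alpha r^{\alpha_1}\xi^\alpha} is actually used. With these two repairs the proof is complete and independent of the operator-algebraic machinery the paper relies on.
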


This result has a long history and related theorems are due to many
people, too many to mention them all here. Nevertheless, one has to
mention the pioneering work of Lockhart-Owen on differential operators
\cite{LockhartOwen} and the work of Melrose-Mendoza for totally
characteristic pseudodifferential operators \cite{MelroseMendoza}. A
closely related theorem for differential operators and domains with
conical points has appeared in a landmark paper by Kondratiev in 1967
\cite{Kondratiev67}. Other important results in this direction were
obtained by Mazya \cite{KMR} and Schrohe--Schultze
\cite{SchroheFrechet, SchSch1}. See the books of Schulze
\cite{SchulzeBook91}, Lesch \cite{Lesch}, and Plamenevski\u{\i} \cite{plamenevskiiBook}
for introductions and more
information on the topics and results of this subsection.

One can easily show that \m{\maI(P)} is invertible if, and only if,
\m{\hat P(\tau)} is invertible for all \m{\tau \in \RR}.  Thus the
Fredholmness criterion of Theorem \ref{thm.fc.aps} can also be given
the following formulation that is closer to our more general result of
Theorem \ref{thm.fc.Lie}.

\begin{theorem}\label{thm.fc.aps2}
Let \m{M} and \m{P} be as in Theorem \ref{thm.fc.aps} and $s \in
\RR$. Then
\begin{multline*}
  P : H^s(M; E) \to H^{s-m}(M; F) \mbox{ is Fredholm } \\
  \Leftrightarrow \ P \mbox{ is elliptic } \underline{\mbox{and }
    \maI(P) \mbox{ is invertible.}}
\end{multline*}
\end{theorem}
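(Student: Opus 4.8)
The plan is to reduce this statement entirely to the already-established Theorem \ref{thm.fc.aps} together with an elementary spectral-theoretic equivalence. The theorem claims that, under the standing hypotheses, $P : H^s(M;E) \to H^{s-m}(M;F)$ is Fredholm if and only if $P$ is elliptic and $\maI(P)$ is invertible. Since Theorem \ref{thm.fc.aps} already tells us that Fredholmness is equivalent to ellipticity together with the invertibility of $\widehat{P}(\tau)$ for every $\tau \in \RR$, the only thing that must be proved is the equivalence
\begin{equation*}
  \maI(P) \text{ is invertible } \ \Longleftrightarrow \ \widehat{P}(\tau) \text{ is invertible for all } \tau \in \RR \,.
\end{equation*}
This is exactly the assertion flagged in the sentence immediately preceding the theorem (\dlp One can easily show that \ldots\drp), so the content of the proof is to make that remark precise.

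First I would recall that $\maI(P)$ is a translation-invariant differential operator on the cylinder $\pa\oM \times \RR$ (Equation \eqref{eq.def.maI}), acting on sections pulled back from the compact manifold $\pa\oM$, and that by construction $\widehat{P}(\tau)$ is its partial Fourier transform in the $\RR$-variable (Equation \eqref{eq.def.indicial}). Concretely, conjugating $\maI(P)$ by the Fourier transform $\maF_t$ in the cylindrical variable turns the generator $\pa_t$ into multiplication by $\imath\tau$, so that $\maF_t\,\maI(P)\,\maF_t^{-1}$ acts fiberwise as the family $\{\widehat{P}(\tau)\}_{\tau\in\RR}$ on $L^2(\RR_\tau)\,\widehat{\otimes}\,H^s(\pa\oM;E)$ (and similarly on the Sobolev scale adapted to the cylindrical metric, since $\maI(P)$ is elliptic on the cylinder exactly when $P$ is elliptic near the boundary). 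The forward implication is then immediate: if $\maF_t\,\maI(P)\,\maF_t^{-1}$ has a bounded two-sided inverse $Q$, then each slice $Q(\tau)$ inverts $\widehat{P}(\tau)$, because evaluation at a fixed $\tau$ respects composition and the identity, giving invertibility of $\widehat{P}(\tau)$ for every $\tau$. For the converse, one assembles the slicewise inverses $\widehat{P}(\tau)^{-1}$ into an operator-valued function of $\tau$; the point is to check that this function is bounded uniformly in $\tau$ on the relevant Sobolev spaces, after which inverse Fourier transform produces a bounded inverse of $\maI(P)$.

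The main obstacle is precisely that uniformity statement in the converse direction. Invertibility of $\widehat{P}(\tau)$ for each individual $\tau$ does not a priori bound $\|\widehat{P}(\tau)^{-1}\|$ as $\tau \to \pm\infty$, so one needs to argue that the family $\widehat{P}(\tau)$ is \emph{elliptic with parameter} $\tau$ (in the Agmon--Douglis--Nirenberg / Shubin sense): ellipticity of $P$ forces the principal symbol of $\widehat{P}(\tau)$ together with the $\tau$-direction to be jointly elliptic, which yields estimates of the form $\|u\|_{H^s} \le C\big(\|\widehat{P}(\tau)u\|_{H^{s-m}} + (1+|\tau|)^{-1}\|u\|_{H^{s-m}}\big)$ for $|\tau|$ large, so that $\widehat{P}(\tau)$ is automatically invertible for $|\tau| \ge R$ with a uniform bound on the inverse. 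On the compact set $|\tau|\le R$, the map $\tau \mapsto \widehat{P}(\tau)$ is continuous (indeed smooth) into the bounded operators $H^s \to H^{s-m}$, and by hypothesis each value is invertible; since inversion is continuous on the open set of invertibles, $\tau\mapsto\widehat{P}(\tau)^{-1}$ is continuous on $[-R,R]$, hence bounded there by compactness. Combining the two regimes gives a global bound $\sup_\tau \|\widehat{P}(\tau)^{-1}\| < \infty$, and transporting back through $\maF_t^{-1}$ produces the bounded two-sided inverse of $\maI(P)$. This closes the equivalence and, with Theorem \ref{thm.fc.aps}, completes the proof. \qed
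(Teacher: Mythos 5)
Your proposal is correct and follows essentially the same route as the paper: Theorem~\ref{thm.fc.aps} reduces everything to the equivalence between invertibility of \m{\maI(P)} and invertibility of \m{\widehat{P}(\tau)} for all \m{\tau \in \RR}, which the paper leaves as something \dlp one can easily show\drp\ and which you establish by Fourier conjugation together with parameter-ellipticity. The uniform bound on \m{\|\widehat{P}(\tau)^{-1}\|} as \m{\tau \to \pm\infty} that you supply is precisely the point that makes the paper's remark honest; note only that the paper also records an alternative derivation, namely that the exact sequence \eqref{eq.APS.es} yields the statement directly.
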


Let us consider now a totally characteristic, twisted Dirac operator
\m{P}. In case \m{P} is Fredholm, its Fredholm index is given by the
Atiyah-Patodi-Singer (APS) formula \cite{APS1}, which expresses
\m{\ind(P)} as the sum of two terms:
\begin{enumerate}[(i)]
\item the integral over \m{\oM} of an explicit form, which is a local
  term that depends only on the principal symbol of the operator
  \m{P}, as in the case of the AS formula, and
\item a boundary contribution that depends only on the indicial family
  \m{\widehat{P}(\tau),} namely the $\eta$-invariant, which is this
  time a {\em non-local} invariant. It can be expressed in terms of
  \m{\maI(P)} \cite{MelroseNistor}.
\end{enumerate}
We thus see that even to be able to formulate the APS-index formula,
we need to know which totally characteristic operators will be
Fredholm. Moreover, the ingredients needed to compute the index of
such an operator \m{P} (that is, its principal symbol $\sigma_m(P)$
and the indicial operator \m{\maI(P)}) are exactly the ingredients
needed to decide that the given operator \m{P} is Fredholm. See
\cite{Bismut, LauterMoroianu2, MelroseNistor,
  CarilloLescureMonthubert} for further results.

Let us now introduce the exact sequence of the APS index formula.
First of all, \m{A := \Psi^0_b(\oM)} is the algebra of {\em totally
  characteristic} pseudodifferential operators on \m{\oM}. One of its
main properties is that the differential operators in \m{A} are
exactly the totally characteristic differential operators. See
\cite{Lesch, SchulzeBook91} for a definition of
\m{\Psi^\infty_b(\oM)}. A definition using groupoids (of a slightly
different algebra) will be given in Subsection \ref{ssec.psdo} in a
more general setting.  Let \m{r} be a defining function of the
boundary \m{\pa \oM} of \m{\oM}, as before.  Next, the ideal is \dm{I
  \,:= \, r \Psi^{-1}_b(\oM) \, = \, \Psi^0_b(\oM) \cap \maK\,.} Then
the symbol algebra \m{\Symb := A/I} is the fibered product
\begin{equation}\label{eq.def.saps}
  \Symb = \CI(S^*\oM) \oplus_{\pa} \Psi^0(\pa \oM \times \RR)^\RR \,,
\end{equation}
more precisely, \m{\Symb} consists of pairs \m{(f, Q)} such that the
principal symbol of the \m{\RR} invariant pseudodifferential operator
\m{Q} matches the restriction of \m{f \in \CI(S^*\oM)} to the
boundary. Recalling the definition of \m{\maI} in Equation
\eqref{eq.def.maI} (and extending it to totally characteristic
pseudodifferential operators), we obtain the exact sequence
\begin{equation}\label{eq.APS.es}
	0 \to r \Psi^{-1}_{b}(\oM) \to \Psi^{0}_{b}(\oM)
        \stackrel{\sigma_0 \oplus \maI}{-\!\!\!\longrightarrow}
        \CI(S^*\oM) \oplus_{\pa} \Psi^0(\pa \oM \times \RR)^\RR \to 0
        \,.
\end{equation}  
(The exact sequence \m{0 \to \Psi^{-1}_{b}(\oM) \to \Psi^{0}_{b}(\oM)
  \stackrel{\sigma_0}{\longrightarrow} \CI(S^*\oM) \to 0 } is, by
contrast, less interesting.)

The exact sequence \eqref{eq.APS.es} in particular gives that \m{P} is
Fredholm if, and only if, the pair \m{(\sigma_0(P), \maI(P)) \in \Symb
  := \CI(S^*M) \oplus_{\pa} \Psi^0(\pa \oM \times \RR)^\RR} is
invertible, which, in turn, is true if, and only if, \m{P} is elliptic
and \m{\maI(P)} is invertible. Thus the exact sequence
\eqref{eq.APS.es} implies Theorem \ref{thm.fc.aps2}.

As before, composing \m{\pa : K_1(\Symb) \to K_0(I)}, \m{I =
  r\Psi^{-1}(\oM))}, with the trace map $Tr_*$ gives us the Fredholm
index
\begin{equation*}
  \ind \, = \, Tr_* \circ \pa \, : \, K_1(\Symb) \, \to \, \CC \,.
\end{equation*}
Since \m{Tr_* \circ \pa = (\pa Tr)_*} \cite{ConnesNCG} (see
\cite{NistorDocumenta} for the case when $Tr$ is replaced by a general
cyclic cocycle), we see that the APS index formula is also equivalent
to the calculation of the class of the cyclic cocycle \m{\pa Tr \in
  \HP^1(\Symb)}.  This was the approach undertaken in
\cite{MelroseNistor, MoroianuNistor}.

\begin{remark}
It is important to stress here first the role of cyclic homology,
which is to define natural morphisms \m{K_1(\Symb) \to \CC}, morphisms
that are otherwise difficult to come by. Also, it is important to
stress that it is the noncommutativity of the algebra of symbols
\m{\Symb} that explains the fact that the APS index formula is
non-local.
\end{remark}

We need to insist of the fact that in the case of the APS framework,
it is the symbol algebra \m{\Symb := A/I} that causes difficulties, in
large part because it is non-commutative (so the classical Chern
character is not defined), whereas the ideal \m{I \subset \maK} is
easy to deal with. This is an opposite situation to the one
encountered for foliations.  It is for this reason that the foliation
framework and the APS framework extend the AS framework in different
directions.

The approach to Index theory explained in this last subsection extends
to more complicated singular spaces, and this has provided the author
of this presentation the motivation to study analysis on singular
spaces.

\section{Motivation: Degeneration and singularity\label{sec2}}

The totally characteristic differential operators studied in the
previous subsection appear not only in index problems, but actually
arise in many practical applications. We shall now examine how the
totally characteristic differential operators and other related
operators appear in practice. In a nut-shell, these operators can be
used to model degenerations and singularities. In this section, we
introduce several examples. We begin with the ones related to the APS
index theorem (the totally characteristic ones, called \dlp rank
one\drp\ by analogy with locally symmetric spaces) and then we
continue with other examples. Again, no results in this section are
new.

\subsection{APS-type examples: rank one}

Let us denote by \m{\rho} the distance to the origin in \m{\RR\sp{d}}.
Here is a list of examples of totally characteristic operators.

\begin{example} In our three examples below, the first
one is a true totally characteristic operator, whereas the other two
require us to remove the factor \m{\rho\sp{-2}} first.

\begin{enumerate}
\item The elliptic generator \m{L} of the Black-Scholes 
 PDE \m{\pa_t - L} \cite{Shreve}
\begin{equation*}
   Lu \, := \, \frac{\sigma^2}{2} x^2\pa_x^2 u + r x \pa_x
  u - ru \,.
\end{equation*}
\item The Laplacian in polar coordinates
  \m{(\rho, \theta)}
\begin{equation*}
  \Delta u \, = \, \rho^{-2} \big( \rho^2 \pa_{\rho}^2 u + {\rho \pa_{\rho}}
  u + \pa_\theta^2u \big).
\end{equation*}
\item The Schr\"{o}dinger operator in spherical coordinates \m{(\rho,
  x\rp)}, \m{x\rp \in S\sp{2}},
\begin{equation*}
  -(\Delta + \frac{Z}{\rho}) u \, = \, -\rho^{-2} \big( \rho^2
  \pa_{\rho}^2 u + 2 {\rho \pa_{\rho}} u + \Delta_{S^2}u + Z \rho u
  \big).
\end{equation*}
\end{enumerate}
\end{example}

A similar expansion is valid for elliptic operators in generalized
spherical coordinates in arbitrary dimensions and was used by
Kondratiev in \cite{Kondratiev67} to study domains with conical
points. Kondratiev\rp s paper is widely used since it provides the
needed analysis facts to deal with polygonal domains, the main testing
ground for numerical methods.

\subsection{Manifolds with corners}
For more complicated examples we will need manifolds with corners.
Recall that \m{\oM} is a {\em manifold with corners} if, and only if,
\m{\oM} is locally diffeomorphic to an open subset of \m{[0,
    1)\sp{n}}. The transition functions of \m{\oM} are supposed to be
  smooth, as in the case of manifolds with smooth boundary. A manifold
  with boundary is a particular case of a manifold with corners, but
  we agree in this paper that a {\em smooth manifold does not have
    boundary} (or corners), since we regard the corners (or boundary)
  as some sort of singularity.

A point $p \in \oM$ is called of {\em depth} $k$ if it has a
neighborhood $V_p$ diffeomorphic to $[0, 1)^{k} \times (-1, 1)^{n-k}$
  by a diffeomorphism $\phi_p : V_p \to [0, 1)^{k} \times (-1,
    1)^{n-k}$ mapping $p$ to the origin:\ $\phi_p(p) = 0$. A connected
    component $F$ of the set of points of depth $k$ will be called an
    open face (of codimension $k$) of $\oM$. The set of points of
    depth $0$ of $\oM$ is called the {\em interior} of $\oM$, is
    denoted $M$, and its connected components are also considered to
    be an open faces of $\oM$. The closure in $\oM$ of an open face
    $F$ of $\oM$ will be called a {\em closed} face of $\oM$. A closed
    face of $\oM$ may not be a manifold with corners in its own. The
    union of the proper faces of \m{\oM} is denoted by \m{\pa \oM} and
    is called the {\em boundary} of \m{\oM}. Thus \m{M:= \oM
      \smallsetminus \pa \oM}.

The following set of vector fields will be useful when defining Lie
manifolds:
\begin{equation}\label{eq.def.maVb}
  \maV_{b} \, := \, \{\, X \in \Gamma(\oM; T\oM), \ X \, \mbox{ is
    {\em tangent} to all boundary faces of } \, \oM \, \} \,.
\end{equation}
Let us notice that in the case of manifolds with boundary, the totally
characteristic differential operators on \m{\oM}, see Equation
\eqref{eq.def.totchar}, are generated by \m{\CI(\oM)} and the vector
fields \m{X \in \maV_b}.

\subsection{Higher rank examples}
We now continue with more complicated examples, which we call \dlp
higher rank\drp\ examples, again by analogy with locally symmetric
spaces. In general, the natural domains for these higher rank examples
will be manifolds with corners.

\begin{example}\label{ex.edge}
There are no \dlp higher rank\drp\ example in dimension one, so we
begin with an example in dimension two.
 
\begin{enumerate}
 \item The simplest non-trivial example is the Laplacian
\begin{equation*}
  \Delta_{\HH} = y\sp{2}( \pa_x\sp{2} + \pa_y \sp{2})
\end{equation*}
on the hyperbolic plane \m{\HH = \RR \times [0, \infty)}, whose
metric is \m{y\sp{-2}(dx\sp2 + dy\sp2)}. 

\item The Laplacian on the hyperbolic plane is closely related to the
  SABR Partial Differential Equation (PDE) due to Lesniewsky and
  collaborators \cite{SABR}. The SABR PDE is also a parabolic PDE
  \m{\pa_t - L} associated to a stochastic differential equation, with
\begin{equation*}
  2L \, := \, y^2\big( x^2 \partial_x^2 + 2\rho\nu x \pa_x \pa_y +
  \nu^2 \pa_y^2 \big) \, ,
\end{equation*}
with \m{\rho} and \m{\nu} real parameters. Stochastic differential
equations provide many interesting and non-trivial examples of
degenerate parabolic PDEs that can be treated using Lie manifolds.

\item A related example is that of the Laplacian in cylindrical
  coordinates \m{(\rho, \theta, z)} in three dimensions:
\begin{equation}\label{eq.def.edge}
   \Delta u \, = \, \rho^{-2} \big( (\rho \pa_{\rho})^2 u +
   \pa_\theta^2u + (\rho \pa_z)^2 \big) \, .
\end{equation}
Ignoring the factor \m{\rho^{-2}}, which amounts to a conformal change
of metric, we see that our differential operator (that is,
$\rho\sp{2}\Delta$) is generated by the vector fields
\begin{equation*}
	\rho \pa_\rho, \ \pa_\theta , \ \mbox{ and } \ \, \rho \pa_z \,,
\end{equation*}
and that the linear span of these vector fields is a {\em Lie
  algebra}. The resulting partial differential operators are usually
called {\em edge differential operators}. This example can be used to
treat the behavior near edges of polyhedral domains of elliptic
PDEs. This behavior is more difficult to treat than the behavior near
vertices.  For a boundary value problems in a three dimensions wedge
of dihedral angle $\alpha$, the natural domain is \m{[0, \alpha]
  \times [0, \infty) \times \RR}, a manifold with corners of
  codimension two.
\end{enumerate}

\end{example}

We thus again see that Lie algebras of vector fields are one of the
main ingredients in the definition of the differential operators that
we are interested in. More related examples will be provided below as
examples of Lie manifolds.

Degenerate elliptic equations have many applications in Numerical
Analysis, see \cite{BNZ3D2, CDN12, DaugeBook, Hengguang, HMN}, for
example.

\section{Lie manifolds: definition and geometry\label{sec3}}

Motivated by the previous two sections, we now give the definition of
a Lie manifold largely following \cite{aln1}. We also introduce a
slightly more general class of manifolds than in \cite{aln1} by
allowing the manifold with corners appearing in the definition to be
noncompact.  We also slightly simplify the definition of a Lie
manifold based on a comment of Skandalis. We thus define our Lie
manifolds using Lie algebroids and then we recover the usual
definition in terms of Lie algebras of vector fields.  I have tried to
make this section as self-contained as possible, thus including most
of the proofs, some of which are new.

\subsection{Lie algebroids and Lie manifolds}
\label{ssec.liealg}
We have found it convenient to introduce Lie manifolds and \dlp open
manifolds with a Lie structure at infinity\drp\ in terms of Lie
algebroids, which we will recall shortly. First, recall that we use
the following notation, if \m{E \to X} is a smooth vector bundle, we
denote by \m{\Gamma(X; E)} (respectively, by \m{\Gamma_c(X;E)}) the
space of smooth (respectively, smooth, compactly supported) sections
of \m{E}. Sometimes, when no confusion can arise, we simply write
\m{\Gamma(E)}, or, respectively, \m{\Gamma_c(E)}. We now introduce Lie
algebroids. Lie algebroids were introduced by Pradines
\cite{Pradines}. See also \cite{HigginsMackenzie1, HigginsMackenzie2}
for some basic results on Lie algebroids and Lie groupoids.  We refer
to \cite{MackenzieBook1, MoerdijkFolBook} for further material and
references to Lie algebroids and groupoids.

\begin{definition} 
A {\em Lie algebroid} \m{A \to \oM} is a real vector bundle over a
manifold with corners \m{\oM} together with a {\em Lie algebra}
structure on \m{\Gamma(\oM;A)} (with bracket \m{[\ , \ ]}) and a
vector bundle map \m{\varrho: A \rightarrow T\oM}, called {\em
  anchor}, such that the induced map \m{\varrho_* : \Gamma(\oM; A) \to
  \Gamma(\oM; T\oM)} satisfies the following two conditions:
\begin{enumerate}[(i)]
\item \m{\varrho_*([X,Y]) = [\varrho_*(X),\varrho_*(Y)]} and
\item \m{[X, fY] = f[X,Y] + (\varrho_*(X) f)Y}, for all \m{X, Y \in
  \Gamma(\oM; A)} and \m{f \in \CI(\oM)}.
\end{enumerate}
\end{definition}

For further reference, let us recall here the {\em isotropy} of a Lie
algebroid.

\begin{definition}\label{def.isotropy}
Let \m{\varrho : A \to T\oM} be a Lie algebroid on \m{\oM} with anchor
\m{\varrho}. Then the kernel \m{\ker (\varrho_x : A_x \to T_x \oM)} of
the anchor is the {\em isotropy} of \m{A} at \m{x \in \oM}.
\end{definition}

The isotropy at any point can be shown to be a Lie algebra. See
\cite{ASkandalis1} for generalizations. Recall that we
denote by \m{\pa \oM} the boundary \m{\oM}, that is, the union of its
proper faces, and by \m{M:= \oM \smallsetminus \pa \oM} its interior.

\begin{definition} \label{def.LieManifold}
A pair $(\oM, A)$ consisting of a manifold with corners $\oM$ and a
Lie algebroid $A \to \oM$ is called an {\em open manifold with a Lie
  structure at infinity} if its anchor \m{\varrho: A \to T\oM}
satisfies the following properties:
\begin{enumerate}[(i)]
\item \m{\varrho: A_x \to T_x\oM} is an isomorphism for all \m{x \in M
  := \oM \smallsetminus \pa M} and
\item \m{\maV := \varrho_*(\Gamma(\oM; A)) \subset \maV_b}.
\end{enumerate}
If \m{\oM} is compact, then the pair \m{(\oM, A)} will be called a
{\em Lie manifold}.
\end{definition}

Condition (ii) means that the Lie algebra of vector fields \m{\maV :=
  \varrho_*(\Gamma(A))} consists of vector fields tangent to all faces
of \m{\oM}. One of the main reason for introducing open manifolds with
a Lie structure at infinity is in order to be able to localize Lie
manifolds. Thus, if \m{(\oM, A)} is a Lie manifold and \m{V \subset
  \oM} is an open subset, then \m{(V, A\vert_{V})} will not be a Lie
manifold, in general, but will be an open manifold with a Lie
structure at infinity.  Thus the Lie manifolds are exactly the compact
open manifold with a Lie structure at infinity.  Lie manifolds were
introduced in \cite{aln1}.

By extension, $\oM$ and $M := \oM \smallsetminus \pa \oM$ in
Definition \ref{def.LieManifold} will also be called open manifolds
with a Lie structure at infinity.  We shall write \m{\Gamma(A)}
instead of \m{\Gamma(\oM; A)} when no confusion can arise, also, we
shall usually write \m{\Gamma(A)} instead of \m{\varrho_*(\Gamma(A))}.
We have the following \dlp trivial\drp\ example.

\begin{example}
The ``example zero'' of a Lie manifold is that of a smooth, compact
manifold \m{M = \oM} (no boundary or corners) and is obtained by
taking \m{A = TM}, thus \m{\maV = \Gamma(TM) = \Gamma(M; TM) =
  \maV_b}. Then \m{(M, A)} is a (trivial) example of a Lie
manifold. This example of a Lie manifold provides the framework for
the AS Index Theorem. Similarly, every smooth manifold \m{M} is an
open manifold with a Lie structure at infinity by taking \m{\oM = M}
and \m{A = TM}.
\end{example}

\begin{example} Let \m{\oM} be a manifold with corners
such that its interior \m{M := \oM \smallsetminus \pa \oM} identifies
with the quotient of a {\em Lie group} \m{G} by a discrete subgroup
\m{\Gamma} and the action of \m{G} on \m{G/\Gamma} by left
multiplication extends to an action of \m{G} on \m{\oM}. Let \m{\mfk
  g} be the Lie algebra of \m{G}. Then \m{A := \oM \times \mfk g} with
anchor given by the infinitesimal action of \m{G} is naturally a Lie
algebroid. Note that the action of the Lie algebra \m{\mfk g}
preserves the structure of faces of \m{\oM} and hence
\m{\varrho_*(\Gamma(A)) \subset \maV_b}. We call the corresponding
manifold with a Lie structure at infinity {\em a group enlargement.}
The simplest example is that of \m{G = M = \RR_+\sp{*}} acting on
\m{[0, \infty]}. Many interesting Lie manifolds arising in practice
are, locally, group enlargements, see for instance
\cite{GeorgescuNistor2, GeorgescuNistor1} for some examples coming
from quantum mechanics.
\end{example}

Let \m{(\oM, A)} be an open manifold with a Lie structure at infinity.
In applications, it is easier to work with the vector fields \m{\maV
  := \varrho_*(\Gamma(A))}, associated to a Lie manifold $(M, A)$,
than with the Lie algebroid $A \to M$.  We shall then use the
following alternative definition of Lie manifolds.

\begin{proposition} \label{prop.LieManifold}
Let us consider a pair \m{(\oM, \maV)} consisting of a compact
manifold with corners \m{\oM} and a subspace \m{\maV \subset
  \Gamma(\oM; T\oM)} of vector fields on \m{\oM} that satisfy:
\begin{enumerate}[(i)]
\item \m{\maV} is closed under the Lie bracket \m{[\;,\;]};
\item \m{\Gamma_c(M; TM) \subset \maV \subset \maV_b};
\item \m{\CI(\oM) \maV = \maV} and \m{\maV} is a finitely-generated
  \m{\CI(\oM)}--module;
\item \m{\maV} is projective (as a \m{\CI(\oM)}--module).
\end{enumerate} 
Then there exists a Lie manifold \m{(\oM, A)} with anchor \m{\varrho}
such that \m{\varrho_*(\Gamma(\oM; A)) = \maV}.

Conversely, if \m{(\oM, A)} is a Lie manifold, then \m{\maV :=
  \varrho_*(\Gamma(\oM; A))} satisfies the conditions (i)--(iv) above.
\end{proposition}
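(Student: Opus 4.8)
The plan is to establish the equivalence between the Lie-algebroid description of a Lie manifold and the vector-field description by exploiting the Serre--Swan correspondence between finitely generated projective \m{\CI(\oM)}-modules and smooth vector bundles. The key observation is that conditions (iii) and (iv) are precisely what is needed to realize \m{\maV} as the module of sections of a vector bundle, while condition (i) supplies the Lie algebra structure and conditions (i)--(ii) control the anchor.

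For the forward direction, I would begin by applying Serre--Swan to the \m{\CI(\oM)}-module \m{\maV}: since \m{\maV} is finitely generated (condition (iii)) and projective (condition (iv)), there exists a smooth vector bundle \m{A \to \oM} together with a \m{\CI(\oM)}-module isomorphism \m{\Gamma(\oM; A) \simeq \maV}. The inclusion \m{\maV \subset \Gamma(\oM; T\oM)} then furnishes, through this isomorphism, a \m{\CI(\oM)}-linear map \m{\Gamma(\oM; A) \to \Gamma(\oM; T\oM)}, and such a map is induced by a unique vector bundle morphism \m{\varrho : A \to T\oM}, the candidate anchor. The Lie bracket on \m{\maV} (available because \m{\maV} is closed under brackets by condition (i)) transports to a bracket on \m{\Gamma(\oM; A)}; the Leibniz identity (axiom (ii) of a Lie algebroid) follows from the Leibniz rule for vector fields together with \m{\CI(\oM)\maV = \maV}, and the compatibility \m{\varrho_*([X,Y]) = [\varrho_*(X),\varrho_*(Y)]} is automatic since \m{\varrho_*} is, under the identification, just the inclusion into \m{\Gamma(\oM; T\oM)}. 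This makes \m{(\oM, A)} a Lie algebroid. To see it is a Lie manifold, I would verify the anchor conditions of Definition \ref{def.LieManifold}: condition (ii) there, namely \m{\varrho_*(\Gamma(\oM; A)) = \maV \subset \maV_b}, is immediate from the right-hand inclusion in condition (ii) here; the isomorphism requirement \m{\varrho : A_x \to T_x \oM} for \m{x \in M} follows from the left-hand inclusion \m{\Gamma_c(M; TM) \subset \maV}, which forces \m{\varrho_*} to surject onto \m{T_x M} at interior points, and a dimension/rank argument (the isomorphism on the open dense interior pins down the rank of \m{A}) upgrades surjectivity to an isomorphism.

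The converse direction is the routine one: given a Lie manifold \m{(\oM, A)}, set \m{\maV := \varrho_*(\Gamma(\oM; A))}. Closure under the bracket is condition (i) of the Lie algebroid definition; the \m{\CI(\oM)}-module structure and finite generation follow because \m{\Gamma(\oM; A)} is the module of sections of a vector bundle and \m{\varrho_*} is \m{\CI(\oM)}-linear; projectivity of \m{\maV} would follow once one checks that \m{\varrho_*} is injective on \m{\Gamma(\oM; A)}, so that \m{\maV \simeq \Gamma(\oM; A)} as modules. Injectivity of \m{\varrho_*} holds because the anchor is an isomorphism on the dense interior \m{M}, so a section mapping to zero vanishes on \m{M} and hence everywhere by continuity. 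The inclusions \m{\Gamma_c(M; TM) \subset \maV \subset \maV_b} of condition (ii) come respectively from the interior-isomorphism property and from Definition \ref{def.LieManifold}(ii).

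The main obstacle, and the step deserving the most care, is verifying that the rank of the bundle \m{A} produced by Serre--Swan is correct on all of \m{\oM}, so that the anchor is genuinely an isomorphism over the interior rather than merely a surjection of modules. The interior condition only controls things on the open dense set \m{M}, and one must argue that the projective module \m{\maV} has constant rank equal to \m{\dim \oM} (matching \m{T\oM}) and that the transported bracket is well defined independently of the chosen module isomorphism. The Leibniz-rule bookkeeping for the bracket on \m{\Gamma(\oM; A)} is straightforward but needs condition (iii) in an essential way, since without \m{\CI(\oM)\maV = \maV} the derivation term in the Leibniz identity could fail to land back in \m{\maV}.
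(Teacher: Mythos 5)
Your proposal follows the same route as the paper's proof: Serre--Swan applied to conditions (iii)--(iv) produces the bundle \m{A}, the inclusion \m{\maV \subset \Gamma(\oM; T\oM)} induces the anchor and transports the bracket (so both Lie algebroid axioms are inherited from vector fields, as you say), and the converse is the routine verification. Your converse matches the paper's essentially verbatim; in fact you are more explicit than the paper on the one point that matters there, namely that injectivity of \m{\varrho_*} (needed so that \m{\maV \simeq \Gamma(\oM; A)} as modules, hence projectivity) follows from the anchor being an isomorphism over the dense interior.

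The one step where your argument does not close up is the claim that \m{\varrho_x : A_x \to T_x\oM} is an isomorphism for \m{x \in M}. As written it is circular: you propose to upgrade surjectivity to an isomorphism by a rank count, but the parenthetical justification of the rank count is ``the isomorphism on the open dense interior pins down the rank of \m{A}'' --- which is exactly the statement being proved --- and your final paragraph concedes that the constant-rank claim is still to be argued. The repair is to prove fiberwise \emph{injectivity} directly; the rank computation is then a consequence, not an ingredient. Recall that the Serre--Swan fiber is \m{(A_{\maV})_x = \maV/I_x\maV}, so injectivity at an interior point \m{x} means: if \m{X \in \maV} and \m{X(x) = 0}, then \m{X \in I_x\maV}. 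To see this, choose \m{\phi \in \CIc(M)} with \m{\phi \equiv 1} near \m{x}, supported in a coordinate neighborhood of \m{x} contained in \m{M}, and write \m{X = \phi^2 X + (1-\phi^2)X}. The second term lies in \m{I_x\maV} because \m{1 - \phi^2 \in I_x} and \m{X \in \maV}. For the first term, write \m{X = \sum_j a_j \pa_j} in the local coordinates; then \m{\phi^2 X = \sum_j (\phi a_j)(\phi \pa_j)}, where each \m{\phi \pa_j \in \Gamma_c(M; TM) \subset \maV} and each \m{\phi a_j \in \CI(\oM)} vanishes at \m{x} (since \m{a_j(x) = 0}), so this term also lies in \m{I_x\maV}. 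Combined with the surjectivity you already obtained from the same inclusion \m{\Gamma_c(M; TM) \subset \maV}, this gives the isomorphism over \m{M}, and as a byproduct the rank of \m{A} equals \m{\dim \oM} on every component (each component has nonempty interior). To be fair, the paper's own proof also asserts the interior isomorphism directly from Assumption (ii) without this computation, so what you flagged as ``the main obstacle'' is precisely the content left implicit in both write-ups; but since you structured your proof so that everything hinges on it, you need to supply it in this non-circular order.
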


\begin{proof}
Let \m{(\oM, \maV)} be as in the statement. Since \m{\maV} is a
finitely generated, projective \m{\CI(\oM)}--module, the Serre--Swan
Theorem implies then that there exists a finite dimensional vector
bundle \m{A_\maV \to \oM}, uniquely defined up to isomorphism, such
that
\begin{equation}\label{eq.def.A}
  \maV \, \simeq \, \Gamma(\oM; A_\maV) \,,
\end{equation}
as \m{\CI(\oM)}--modules.  Let \m{I_x := \{ \phi \in \CI(\oM), \,
  \phi(x) = 0\}} be the maximal ideal corresponding to \m{x \in \oM}.
The fibers \m{(A_{\maV})_{x}}, \m{x \in \oM}, of the vector bundle
\m{A_{\maV} \to \oM} are given by \m{(A_{\maV})_{x} = \maV/I_{x}
  \maV}. Since \m{\Gamma(\oM; A_\maV) \simeq \maV \subset \Gamma(\oM;
  T\oM)}, we automatically obtain for each \m{x \in \oM} a map
\begin{equation*}
  (A_{\maV})_{x} \, := \, \maV/I_{x} \maV \, \rightarrow\, \Gamma(\oM;
  T\oM)/I_{x}\Gamma(\oM; T\oM) \, = \, T_x \oM\,.
\end{equation*}
These maps piece together to yield a bundle map (anchor) \m{\varrho :
  A_{\maV} \to T\oM} that makes \m{A_{\maV} \to \oM} a Lie
algebroid. The anchor map \m{\varrho} is an isomorphism over the
interior \m{M} of \m{\oM} since \m{\Gamma_c(M; TM) \subset \maV},
which is part of Assumption (ii). Since \m{\maV \subset \maV_b}, again
by Assumption (ii), we obtain that \m{(\oM, A_\maV)} is indeed a Lie
manifold.

Conversely, let \m{(\oM, A)} be a Lie manifold with anchor \m{\varrho
  : A \to T\oM}.  We need to check that \m{\maV :=
  \varrho_*(\Gamma(\oM; A))} satisfies conditions (i)--(iv) of the
statement.  Indeed, \m{\maV := \varrho_*(\Gamma(\oM; A))} is a Lie
algebra because \m{\Gamma(\oM; A)} is a Lie algebra and \m{\varrho_* :
  \Gamma(\oM; A) \to \Gamma(\oM; T\oM)} is an injective Lie algebra
morphism. So Condition (i) is satisfied.  To check the second
conditions, we notice that Definition \ref{def.LieManifold}(i)
(isomorphism over the interior) gives that \m{\Gamma_c(M; T\oM)
  \subset \maV}. Since we have by assumption \m{\maV \subset \maV_b},
we see that Condition (ii) is also satisfied. Finally, Conditions
(iii) and (iv) are satisfied since the space of smooth sections of a
finite dimensional vector bundle defines a projective module over the
algebra of smooth functions on the base, again by the Serre-Swan
theorem.
\end{proof}

Let \m{(\oM, \maV)} as in the statement of the above proposition,
Proposition \ref{prop.LieManifold}. We call \m{\maV} its {\em
  structural Lie algebra of vector fields} and we call the Lie
algebroid \m{A_\maV \to \oM} introduced in Equation \eqref{eq.def.A}
the {\em the Lie algebroid associated to} \m{(\oM, \maV)}.  The
alternative characterization of Lie manifolds in Proposition
\ref{prop.LieManifold} is the one that will be used in our examples.

\begin{remark}\label{rem.localBasis}
It is worthwhile pointing out that the condition that \m{\maV} be a
finitely generated, projective \m{\CI(\oM)}--module in Proposition
\ref{prop.LieManifold} together with the fact that the anchor
\m{\varrho} is an isomorphism over the interior of \m{\oM} are
equivalent to the following condition, where \m{n = \dim(\oM)}:
\begin{quotation}
  For every point \m{p \in \oM}, there exist a neighborhood \m{V_p} of
  \m{p} in \m{\oM} and \m{n}-vector fields \m{X_1, X_2, \ldots, X_n
    \in \maV} such that, for any vector field \m{Y \in \maV}, there
  exist smooth functions \m{\phi_1, \phi_2, \ldots, \phi_n \in
    \CI(\oM)} such that
\begin{equation}
  Y = \phi_1 X_1 + \phi_2 X_2 + \ldots + \phi_n X_n \text{ on } V_p
  \,,\quad \text{with } \ \phi_i\vert_{V_p} \mbox{ uniquely
    determined.}
\end{equation}
\end{quotation}
The vector fields \m{X_1, X_2, \ldots, X_n} are then called a {\em
  local basis} of \m{\maV} on $V_p$. (This is the analog in our case
of the well known fact from commutative algebra that a module is
projective if, and only if, it is locally free.)
\end{remark}

In the next example, we shall need the defining functions of a \dlp
hyperface.\drp\ A {\em hyperface} is a proper face \m{H \subset \oM}
of maximal dimension (dimension \m{\dim(H) = \dim(M) - 1}). Recall
that a {\em defining function} of a hyperface \m{H} of \m{\oM} is a
function \m{x} such that \m{H = \{x = 0 \}} and \m{dx \neq 0} on
\m{H}.  The hyperface \m{H \subset \oM} is called {\em embedded} if it
has a defining function. The existence of a defining function is a
global property, because locally one can always find defining
functions, a fact that will be needed in the example below.

The simplest example of a non-compact Lie manifold is that of a
manifold with cylindrical ends. The following example generalizes this
example to the higher rank case. It is a basic example to which we
will come back later.

\begin{example} \label{ex.one}
Let \m{\oM} a compact manifold with corners and \m{\maV =
  \maV_{b}}. Let us check that \m{(\oM, \maV_b)} is a Lie manifold.
We shall use Proposition \ref{prop.LieManifold}. Condition (i) is
easily verified since the Lie bracket of two vector fields tangent to
a submanifold is again tangent to that submanifold. Condition (ii) in
the Proposition \ref{prop.LieManifold} is even easier since, by
definition, vector fields that are zero near the boundary \m{\pa \oM}
are contained in \m{\maV_{b}}. Clearly, \m{\maV} is a \m{\CI(\oM)}
module. The only non-trivial fact to check is that \m{\maV} is
finitely generated and projective as an \m{\CI(\oM)} module.  This is
actually the only fact that we still need to check.  To verify it, let
us fix a corner point \m{p} of codimension \m{k} (that is, \m{p}
belongs to an open face \m{F} of codimension \m{k}). Then, in a
neighborhood of \m{p}, we can find \m{k} defining functions \m{r_1,
  r_2, \ldots, r_k} of the hyperfaces containing \m{p} such that a
local basis of \m{\maV} around \m{p} (see Remark \ref{rem.localBasis})
is given by
\begin{equation}\label{eq.local.basis.one}
  r_1 \pa_{r_1}, \, r_2 \pa_{r_2}, \, \ldots,\, r_k \pa_{r_k},\,
  \pa_{y_{k+1}}, \, \ldots , \, \pa_{y_n} \,,
\end{equation}
where \m{y_{k+1}, \ldots, y_n} are local coordinates on the open face
\m{F} of dimension \m{k} containing \m{p}, so that \m{(r_1, r_2,
  \ldots, r_k , y_{k+1}, \ldots, y_{n})} provide a local coordinate
system in a neighborhood of \m{p} in \m{\oM}. If \m{\oM} has a smooth
boundary, then \m{\maV_{b}} generates the totally characteristic
differential operators, which were introduced in Equation
\eqref{eq.def.totchar}, and hence this example corresponds to a
manifold with cylindrical ends.  In fact, we will see that the natural
Riemannian metric of a manifold with (asymptotically) cylindrical
ends. This example was studied also by Debord and Lescure
\cite{DebordLescure2, DebordLescure}, Melrose and Piazza
\cite{MelrosePiazza}, Monthubert \cite{Monthubert}, Schulze
\cite{SchulzeBook91}, and many others.
\end{example}

By \cite{aln1}, every vector field \m{X \in \maV} that has compact
support in \m{\oM} gives rise to a one parameter group of
diffeomorphisms \m{\exp(tX) : \oM \to \oM}, \m{t \in \RR}. We denote
by \m{\exp(\maV)} the subgroup of diffeomorphisms generated by all
\m{\exp(X)} with \m{X \in \maV} and compact support in \m{\oM}. The
results in \cite{ASkandalis1} show that \m{\exp(\maV)} acts by Lie
automorphisms of \m{\maV} (the condition (iv) of Proposition
\ref{prop.LieManifold} that \m{\maV} be a projective module is not
necessary). Also, it would be interesting to see how the groups
\m{\exp(\maV)} fit into the general theory of infinite dimensional Lie
groups \cite{Neeb}.

\subsection{The metric on Lie manifolds}

As seen from the example of manifolds with cylindrical ends, Lie
manifolds have an intrinsic geometry. We now discuss some results in
this direction following \cite{aln1} and we extend them to open
manifolds with a Lie structure at infinity (this extension is
straightforward, but needed). Thus, from now on, \m{(\oM, A)} will be
an open manifold with a Lie structure at infinity.  (Thus we will not
assume \m{(\oM, A)} to be a Lie manifold, unless explicitly stated.)

\begin{definition}\label{def.compatible}
Let \m{(\oM, A)} be an open manifold with a Lie structure at infinity.
A metric on \m{TM} is called {\em compatible} (with the structure at
infinity) if it extends to a metric on \m{A \to \oM}.
\end{definition}

We shall need the following lemma.

\begin{lemma}\label{lemma.distance}
Let \m{(\oM, A)} be an open manifold with a Lie structure at infinity
with compatible metric \m{g}. Assume \m{\oM} to be paracompact. Then
there exists a smooth metric \m{h} on \m{T\oM} such that \m{h \le g}.
\end{lemma}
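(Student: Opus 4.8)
The plan is to build $h$ locally on small coordinate charts, arrange the inequality $h\le g$ there by a single scaling constant, and then glue using a partition of unity (this is where paracompactness is used). One cannot simply push the compatible metric on $A\to\oM$ forward along the anchor $\varrho$, nor pull a smooth metric on $T\oM$ back to $A$: at a boundary point the map $\varrho_x\colon A_x\to T_x\oM$ is in general neither injective (its kernel is the isotropy, Definition \ref{def.isotropy}) nor surjective (since $\maV=\varrho_*(\Gamma(A))\subset\maV_b$ is tangent to every face). So the two metrics genuinely live on different bundles and must be compared through a common local frame.

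First I would fix $p\in\oM$ and choose a relatively compact coordinate neighborhood $U\ni p$ whose closure $\overline U$ lies inside a larger chart over which the vector bundle $A$ is trivial; pick a smooth frame $X_1,\dots,X_n\in\Gamma(A)$ over that larger chart and let $h_0$ be the Euclidean metric of the coordinates (a smooth metric on $T\oM$ there). Put $G_{ij}:=g(X_i,X_j)$, where now $g$ denotes the compatible metric regarded as a metric on $A\to\oM$, and $H_{ij}:=h_0(\varrho_*X_i,\varrho_*X_j)$. Both are smooth symmetric matrix-valued functions; $G$ is positive definite everywhere, while $H$ is positive semidefinite everywhere and positive definite on $M$, because there $\varrho_*X_1,\dots,\varrho_*X_n$ is a frame of $TM$ by Definition \ref{def.LieManifold}(i). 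On the compact set $\overline U$ there are constants $c,C>0$ with $G\ge cI$ and $H\le CI$ (matrix inequalities), so $\lambda H\le G$ on $\overline U$ with $\lambda:=c/C$. For $x\in U\cap M$ and $v\in T_xM$ I would expand $v=\sum_i c_i\,\varrho_*X_i(x)$ and use compatibility in the form $g(\varrho_*X_i,\varrho_*X_j)=g(X_i,X_j)=G_{ij}$ to get $(\lambda h_0)_x(v,v)=\lambda\sum_{ij}c_ic_jH_{ij}(x)\le\sum_{ij}c_ic_jG_{ij}(x)=g_x(v,v)$. Thus $\lambda h_0$ is a smooth metric on $T\oM|_U$ with $\lambda h_0\le g$ on $U\cap M$.

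To globalize, cover $\oM$ by such charts $U_\alpha$ with metrics $h_\alpha:=\lambda_\alpha h_0^{(\alpha)}$, pass (by paracompactness) to a locally finite refinement with a subordinate smooth partition of unity $\{\phi_\alpha\}$, and set $h:=\sum_\alpha\phi_\alpha h_\alpha$, each term extended by zero outside $U_\alpha$. The sum is locally finite, hence a smooth section of $\mathrm{Sym}^2T^*\oM$; it is positive definite because at every point some $\phi_\alpha$ is positive and the corresponding $\phi_\alpha h_\alpha$ is positive definite there while the others are positive semidefinite, so $h$ is a Riemannian metric on $T\oM$. Finally, for $x\in M$ and $v\in T_xM$ one has $h_x(v,v)=\sum_\alpha\phi_\alpha(x)(h_\alpha)_x(v,v)\le\big(\sum_\alpha\phi_\alpha(x)\big)g_x(v,v)=g_x(v,v)$, since $(h_\alpha)_x(v,v)\le g_x(v,v)$ whenever $\phi_\alpha(x)\neq0$.

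The only substantial point---and the place the hypotheses are actually needed---is the local scaling: it works precisely because the metric is \emph{compatible}, so $G_{ij}=g(X_i,X_j)$ remains uniformly positive definite up to the boundary on a compact chart; without this a fixed smooth metric on $T\oM$ cannot in general be pushed below $g$ near $\pa\oM$. The rest is routine, and convexity of the cone of metrics is what makes the partition-of-unity gluing preserve $h\le g$.
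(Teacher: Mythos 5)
Your proof is correct and follows essentially the same route as the paper's: compare $g$ (viewed as a metric on $A$) with an auxiliary smooth metric on $T\oM$ over relatively compact sets to extract a local scaling constant, then glue by a partition of unity, using that the inequality $h\le g$ is preserved under convex combinations. The only cosmetic difference is that the paper scales a single globally chosen auxiliary metric $h_0$ by local constants $M_p^{-1}$ (and uses $g$ itself on an interior piece of the cover), whereas you use local coordinate Euclidean metrics and a frame of $A$; both are the same compactness-plus-partition-of-unity argument.
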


\begin{proof}
Let us choose an arbitrary metric \m{h_0} on \m{\oM} (or, more
precisely, on \m{T\oM}). For each \m{p \in \oM}, let \m{U_p \subset
  V_p} be open neighborhoods of \m{p} in \m{\oM} such that \m{V_p} has
compact closure and contains the closure of \m{U_p}. Since \m{V_p} has
compact closure and the anchor map \m{\varrho} is continuous, we
obtain that there exists \m{M_p > 0} such that \m{h_0(\xi) \le M_p
  g(\xi)} for every \m{\xi \in A\vert_{V_p}}. Let us choose $I \subset
\pa \oM$ such that \m{ \{U_p, p \in I\} } is a locally finite covering
of the boundary \m{\pa \oM}. Let \m{V_0} be the complement of
\m{\cup_{p \in I} \overline{U_p}} and let \m{(\phi_p)_{p \in I \cup
    \{0\}}} be a smooth, locally finite partition of unity on \m{\oM}
subordinated to the covering \m{(V_p)_{p \in I \cup \{0\}}}. Then, if
we define
\begin{equation*}
  h \, = \, \sum_{p \in I} \phi_p M_{p}\sp{-1} h_0 + \phi_0 g \,,
\end{equation*}
the metric \m{h} will satisfy \m{h \le g} everywhere, as desired.
\end{proof}

Let us fix from now on a metric \m{g} on \m{A}, which restricts to a
compatible Riemannian metric on \m{M}. The inner product of two
vectors (or vector fields) \m{X, Y \in \Gamma(M; TM)} in this metric
will be denoted \m{(X, Y) \in \CI(M)} and the associated volume form
\m{d\vol_g}. Of course, if \m{X, Y \in \maV := \varrho_*(\Gamma(\oM;
  A)}), then \m{(X, Y) \in \CI(\oM)}. We now want to investigate some
properties of the metric \m{g}. For simplicity, we write \m{\Gamma(TM)
  = \Gamma(M; TM)} and \m{\varrho_*(\Gamma(A)) = \Gamma(\oM; A)}.
Let us consider the Levi-Civita connection
\begin{equation}\label{eq.def.LC}
  \nabla \sp{g} : \Gamma(TM) \, \to \, \Gamma(TM \otimes T^*M) 
\end{equation}
associated to the metric $g$.  Recall that an \m{A}--connection on a
vector bundle \m{E \to \oM} (see \cite{aln1} and the references
therein) is given by a differential operator $\nabla$ such that
\begin{equation}
  \nabla_{fX}(f_1 \xi) \, = \, f \big ( f_1\nabla_{X}(\xi) + X(f_1) \xi \big )
\end{equation}
for all \m{f, f_1 \in \CI(\oM)} and \m{\xi \in \Gamma(\oM; E)}.  The
following proposition from \cite{aln1} gives that the Levi-Civita
connection extends to an \dlp $A$-connection.\drp

\begin{proposition}\label{prop.LC}
Let $(\oM, A)$ be an open manifold with a Lie structure at infinity
and $g$ be a compatible metric $M$. The Levi-Civita connection
associated to the compatible metric $g$ extends to a linear
differential operator \m{\nabla = \nabla\sp{g} : \varrho_*(\Gamma(A))
  \to \Gamma(A \otimes A^*),} satisfying
\begin{enumerate}[(i)]
  \item \m{\nabla_X(fY) = X(f) Y + f \nabla_X(Y)},
  \item \m{X(Y,Z) = (\nabla_XY,Z) + (Y,\nabla_XZ)}, and
  \item \m{\nabla_X Y - \nabla_Y X = [X, Y]},
\end{enumerate}
for all \m{X, Y, Z \in \maV = \varrho_*(\Gamma(A))}.
\end{proposition}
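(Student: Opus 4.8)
The plan is to reduce the claimed $A$-connection identity to the classical Levi-Civita identity on the interior $M$, and then argue that the resulting differential operator extends over the boundary by a density/continuity argument using the module structure of $\maV$. First I would recall the Koszul formula for the Levi-Civita connection $\nabla^g$ on $M$: for $X, Y, Z \in \Gamma(TM)$,
\begin{equation*}
  2(\nabla_X Y, Z) \, = \, X(Y,Z) + Y(X,Z) - Z(X,Y) + ([X,Y],Z) - ([X,Z],Y) - ([Y,Z],X) \,.
\end{equation*}
The point is that every term on the right-hand side makes sense for $X, Y, Z \in \maV = \varrho_*(\Gamma(\oM;A))$ and takes values in $\CI(\oM)$, \emph{not} merely in $\CI(M)$: indeed $\maV$ is closed under the Lie bracket by Proposition~\ref{prop.LieManifold}(i), and if $X,Y \in \maV$ then $(X,Y) \in \CI(\oM)$ because the metric $g$ extends to a metric on $A \to \oM$ (Definition~\ref{def.compatible}), so each directional derivative $X(Y,Z)$ of such a function along a vector field in $\maV$ again lies in $\CI(\oM)$. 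Thus the Koszul formula defines a map $\maV \times \maV \to \CI(\oM)$, $(X,Y,Z) \mapsto 2(\nabla_X Y, Z)$, which is $\CI(\oM)$-linear in $Z$.

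Next I would use Remark~\ref{rem.localBasis}: around each point $p \in \oM$ there is a local basis $X_1, \ldots, X_n \in \maV$ of $\maV$, and $g$ restricted to this basis is an invertible matrix of functions in $\CI(V_p)$ (invertibility because $g$ is a genuine metric on $A$). Hence from the functional $Z \mapsto 2(\nabla_X Y, Z)$ I can solve for a well-defined section $\nabla_X Y \in \maV|_{V_p}$ characterized by the Koszul formula; these local definitions agree on overlaps by uniqueness of the representing vector, so they glue to a global $\nabla_X Y \in \varrho_*(\Gamma(\oM;A))$, i.e.\ an element of $\Gamma(\oM; A)$ via the isomorphism $\varrho_*$. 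This gives the desired operator $\nabla : \varrho_*(\Gamma(A)) \to \Gamma(A \otimes A^*)$, and the fact that it is a differential operator (of order one) follows from the explicit Koszul expression, which involves only first derivatives of the coefficients.

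Finally, properties (i)--(iii) I would verify by the same mechanism: each is an identity between sections of $A$ (equivalently, of $TM$ over the interior), both sides of which are continuous in the $\CI(\oM)$-topology, and both sides already coincide on the interior $M$ because there $\nabla = \nabla^g$ is the ordinary Levi-Civita connection, for which the Leibniz rule (i), metric compatibility (ii), and torsion-freeness (iii) are standard. Since $M$ is dense in $\oM$ and the sections involved are smooth up to the boundary, equality on $M$ forces equality on all of $\oM$. Alternatively, and more cleanly, one checks (i)--(iii) directly from the Koszul formula by the same elementary manipulations as in the classical proof, now carried out in $\CI(\oM)$ rather than $\CI(M)$; the manipulations are term-by-term identical and use only bilinearity of $g$, the Leibniz rule for $X(\cdot)$, and the Jacobi identity for $[\cdot,\cdot]$ in $\maV$. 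The main obstacle, and the only genuinely new point beyond the classical argument, is the verification that the Koszul-formula output lands in $\maV$ (equivalently in $\Gamma(\oM;A)$) rather than just in $\Gamma(M;TM)$ — i.e.\ that $\nabla$ does not develop singularities at $\pa\oM$ — and this is exactly what the compatibility of $g$ with $A$ and the local-basis statement of Remark~\ref{rem.localBasis} are designed to guarantee.
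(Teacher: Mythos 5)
Your proposal is correct and follows essentially the same route as the paper: the paper also writes down the Koszul-type expression, observes that closure of $\maV$ under brackets and the extension of $g$ to $A$ make every term a function in $\CI(\oM)$ depending ($\CI(\oM)$-)linearly on $Z$, defines $\nabla_X Y$ as the representing section in $\maV$, and deduces (i)--(iii) by density of $M$ in $\oM$. The only difference is that you spell out the existence of the representing section via a local basis and the invertibility of the Gram matrix, a step the paper asserts without detail.
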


\begin{proof} We recall the proof for the benefit of the reader.
Since the metric \m{g} actually comes from an metric on \m{A} by
restriction to \m{TM \subset A}, we see that
\begin{equation}\label{eq.def.nxy}
  \phi(Z) \, := \, ([X,Y],Z) - ([Y,Z],X) + ([Z,X],Y) + X (Y,Z) + Y
  (Z,X) - Z (X,Y) \,,
\end{equation}
defines a smooth function on \m{\oM} for any \m{Z \in \maV} and that
this smooth function depends linearly on \m{Z}. Hence there exists a
smooth section \m{V \in \maV} such that \m{\phi(Z) = (V, Z)} for all
\m{Z \in \maV}. We then define \m{\nabla_X Y := V}. By the definition
of \m{\nabla} and by the classical definition of the Levi-Civita
connection, \m{\nabla} extends the Levi-Civita connection. Since the
Levi-Civita connection satisfies the properties that we need to prove
(on \m{M}), by the density of \m{M} in \m{\oM}, we obtain that
\m{\nabla} satisfies the same properties.
\end{proof}

We continue with some remarks

\begin{remark}
An important consequence of the above proposition is that each of the
covariant derivatives \m{\nabla^k R} of the curvature \m{R} extends to
a tensor defined on the {\em whole of} \m{\oM}. If $\oM$ is compact
(that is, if \m{(\oM, A)} is a Lie manifold), it follows that the
curvature and all its covariant derivatives are {\em bounded}. It
turns out also that the radius of injectivity of \m{M} is positive
\cite{sobolev, Debord1}, and hence \m{M} has {\em bounded geometry}.
\end{remark}

We next discuss the divergence of a vector field, which is needed to
define adjoints.

\begin{remark}
Another important consequence of the existence of an extension of the
Levi-Civita connection to $\oM$ is the definition of the divergence of
a vector field. Indeed, let us fix a point \m{p \in \oM} and a local
orthonormal basis $X_1, \ldots, X_n$ of $A$ on some neighborhood of
\m{p} in $\oM$ (\m{n = \dim(\oM)}). We then write $\nabla_{X_i}X =
\sum_{j=1}\sp{n} c_{ij}(X) X_j$ and define
\begin{equation}\label{eq.def.div}
  \div(X) \, := \, -\sum_{j=1}\sp{n} c_{jj}(X) \,,
\end{equation} 
which is a smooth function on the given neighborhood of \m{p} that
does not depend on the choice of the local orthonormal basis \m{(X_i)}
used to define it. Consequently, this formula defines a global
function \m{\div(X) \in \CI(\oM)}.
\end{remark}

We now introduce differential operators on open manifolds with a Lie
structure at infinity. The desire to study these operators is the main
reason why we are interested in Lie manifolds.

\begin{definition} Let $(\oM, A)$ be an open manifold with a 
Lie structure at infinity and $\maV := \Gamma(\oM; A)$.  The algebra
\m{\Diff(\maV)} is the algebra of differential operators on \m{\oM}
generated by the operators of multiplication with functions in
\m{\CI(\oM)} and by the directional derivatives with respect to vector
fields \m{X \in \maV}.
\end{definition}

In \cite{ASkandalis1}, the definition of $\Diff(\maV)$ was extended to
the case of not necessarily projective $\CI(\oM)$-modules $\maV$.

Clearly, in our first example, Example \ref{ex.one}, the resulting
algebra of differential operators \m{\Diff(\maV) = \Diff(\maV_b)} for
$\oM$ a manifold with boundary is the algebra of totally
characteristic differential operators.  We shall see several other
examples in this paper.  The differential operators in \m{\Diff(\maV)}
can be regarded as acting either on functions on \m{\oM} or on
functions on \m{M := \oM \smallsetminus \pa \oM}. When it comes to
classes of measurable functions--say Sobolev spaces--this makes no
difference. However, the fact that \m{\Diff(\maV)} maps \m{\CI(\oM)}
to \m{\CIc(\oM)} is a non-trivial property that does not follow from
the $L\sp{2}$-mapping properties of \m{\Diff(\maV)} on \m{M}. We have
the following simple remark on the local structure of operators in
\m{\Diff(\maV)}.

\begin{remark}
Every \m{P \in \Diff(\maV)} of order at most \m{m} can be written as a
sum of differential monomials of the form \m{X_1\sp{\alpha_1}
  X_2\sp{\alpha_2} \ldots X_k\sp{\alpha_k}}, where \m{X_i \in \maV},
\m{k \le m}, and \m{\alpha} is a multi-index. If \m{Y_1, Y_2, \ldots,
  Y_n} are vector fields in \m{\maV} forming a local basis around \m{p
  \in \oM} (so \m{\dim(M) = n}), then every \m{P \in \Diff(\maV)} of
order at most \m{m} can be written in a neighborhood of \m{p} in
\m{\oM} uniquelly as
\begin{equation*}
  P \, = \, \sum_{|\alpha| \le m} a_{\alpha} Y_1\sp{\alpha_1}
  Y_2\sp{\alpha_2} \ldots Y_n\sp{\alpha_n} \,, \quad a_\alpha \in
  \CIc(\oM)\,.
\end{equation*}
This follows from the Poincar\arp{e}-Birkhoff-Witt theorem of
\cite{NWX}.
\end{remark}


The next remark states that the algebra $\Diff(\maV)$ is closed under
adjoints.

\begin{remark}\label{rem.L2}
We shall denote the inner product on \m{L\sp{2}(M; \vol_g)} by \m{(\ ,
  \ )_{L\sp{2}}}. Let \m{P \in \Diff(\maV)}. The formal adjoint
$\sh{P}$ of \m{P} is then defined by
\begin{equation}
  (P f_1, f_2)_{L\sp{2}} \, = \, (f_1, \sh{P}f_2)_{L\sp{2}}, \qquad
  f_1, f_2 \in \CIc(M).
\end{equation}
Let $X \in \maV := \varrho_*(\Gamma(A))$. Since \m{\div(X) \in
  \CI(\oM)} of Equation \eqref{eq.def.div} extends the classical
definition on \m{M}, we have that
\begin{equation*}
  \int_{M} \, X(f) \ d\vol_g \, = \, \int_{M}\, f \div(X) \ d\vol_g
  \,.
\end{equation*}
In particular, the formal adjoint of $X$ is
\begin{equation}
  \sh{X} \, = \, - X + \div(X) \in \Diff(\maV) \,,
\end{equation}
and hence \m{\Diff(\maV)} is closed under formal adjoints. 
\end{remark}

We can consider matrices of operators in $\Diff(\maV)$ and operators
acting on bundles.

\begin{remark}\label{rem.bundle}
We can extend the definition of \m{\Diff(\maV)} by considering the
space \m{\Diff(\maV; E, F)} of operators acting between smooth
sections of the vector bundles \m{E, F \to \oM}. This can be done
either by embedding the vector bundles \m{E} and \m{F} into trivial
bundles or by looking at a local basis. The formal adjoint of \m{P \in
  \Diff(\maV; E, F)} is then an operator \m{\sh{P} \in \Diff(\maV;
  F\sp{*}, E\sp{*})}. We shall write \m{\Diff(\maV; E) := \Diff(\maV;
  E, E)}.  Typically \m{E} and \m{F} will have hermitian metrics and
then we identify \m{E\sp*} with \m{E} and \m{F\sp*} with \m{F}. In
particular, if \m{E} is a Hermitian bundle, then \m{\Diff(\maV; E)} is
an algebra closed under formal adjoints.
\end{remark}

We are ready now to prove that all geometric operators on \m{M} that
are associated to a compatible metric \m{g} are generated by \m{\maV
  := \varrho_*(\Gamma(A))} \cite{aln1}. (Recall that a compatible
metric on \m{M} is a metric coming from a metric on the Lie algebroid
\m{A} of our Lie manifold \m{(\oM, A)} by restriction to \m{TM}.) In
particular, we have the following result \cite{aln1}.

\begin{proposition}\label{prop.geometric} 
We have that the de Rham differential \m{d} on \m{M} extends to a
differential operator \m{d \in \Diff(\maV; \Lambda^q A^*,
  \Lambda^{q+1} A^*)}. Similarly, the extension \m{\nabla} of the
Levi-Civita connection to an \m{A}-valued connection defines a
differential operator \m{\nabla \in \Diff(\maV; A, A \otimes A^*).}
\end{proposition}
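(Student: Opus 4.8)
The plan is to handle the two operators in turn, reducing the assertion about the de Rham differential $d$ to the one about the connection $\nabla$ via the classical intrinsic formula for the exterior derivative. In both cases the point to verify is purely local: near a point $p \in \oM$ one must check that, in a local basis $Y_1, \ldots, Y_n \in \maV$ of $\maV$ (Remark \ref{rem.localBasis}) and in the associated local frames of the bundles $\Lambda^q A^*$, respectively $A$ and $A \otimes A^*$, the operator is a polynomial of order one in the derivations $Y_i$ with coefficients in $\CI(\oM)$, \ie smooth \emph{up to the boundary}. This is exactly the condition defining membership in the spaces $\Diff(\maV;\cdot,\cdot)$.

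First I would dispose of $\nabla$, which is almost immediate from Proposition \ref{prop.LC}: that proposition says precisely that $\nabla_X Y \in \maV = \varrho_*(\Gamma(\oM; A))$ for all $X, Y \in \maV$. So, fixing $p$ and a local basis $Y_1, \ldots, Y_n$ of $\maV$ near $p$ (which is at the same time a local frame of $A$ over some $V_p$), I write $\nabla_{Y_i} Y_j = \sum_\ell \Gamma_{ij}^\ell Y_\ell$; since $\nabla_{Y_i} Y_j \in \maV$ and $(Y_\ell)$ is a local basis, the $\Gamma_{ij}^\ell$ are uniquely determined elements of $\CI(V_p)$. For $\xi = \sum_j f_j Y_j$ with $f_j \in \CI(V_p)$, property (i) of Proposition \ref{prop.LC} gives $\nabla_{Y_i}\xi = \sum_j (Y_i f_j) Y_j + \sum_{j,\ell} f_j \Gamma_{ij}^\ell Y_\ell$, which is visibly built from the directional derivatives along $Y_i \in \maV$ and multiplications by functions in $\CI(\oM)$. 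Hence $\nabla \in \Diff(\maV; A, A \otimes A^*)$, and the same reasoning applies to the induced connection on any tensor bundle manufactured from $A$, in particular on $\Lambda^q A^*$.

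For $d$ I would use that the extended Levi-Civita connection is torsion free, property (iii) of Proposition \ref{prop.LC}, together with the standard identity
\begin{equation*}
  (d\omega)(X_0, \ldots, X_q) \, = \, \sum_{i=0}^{q} (-1)^i (\nabla_{X_i}\omega)(X_0, \ldots, \widehat{X_i}, \ldots, X_q)\,,
\end{equation*}
valid on $M$ for a torsion-free connection and, by the density of $M$ in $\oM$, extending to all of $\oM$ with $X_0, \ldots, X_q \in \maV$. The right-hand side is the composition of $\nabla$ (on $\Lambda^q A^*$, already shown to lie in $\Diff(\maV)$) with the antisymmetrization $\Lambda^q A^* \otimes A^* \to \Lambda^{q+1} A^*$, which is a $\CI(\oM)$-linear bundle map; therefore $d \in \Diff(\maV; \Lambda^q A^*, \Lambda^{q+1} A^*)$. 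Alternatively one can bypass $\nabla$ entirely and use the Koszul formula for $d$ with the $X_i$ ranging over a local basis $Y_1, \ldots, Y_n$ of $\maV$: the derivative terms $Y_i(\omega(\ldots))$ are directional derivatives along $\maV$, and each bracket term is handled by writing $[Y_i, Y_j] = \sum_\ell c_{ij}^\ell Y_\ell$ with $c_{ij}^\ell \in \CI(\oM)$ --- which is legitimate precisely because $\maV$ is closed under the Lie bracket (Proposition \ref{prop.LieManifold}(i)) and $(Y_\ell)$ is a local basis --- so that $\omega([Y_i, Y_j], \ldots)$ becomes a $\CI(\oM)$-combination of the components of $\omega$. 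Either way the proof is bookkeeping once Proposition \ref{prop.LC} is in hand; the only point needing a little care --- and the one I would flag as the main obstacle --- is the systematic passage from $M$ to $\oM$: every formula is classical on the interior and must be promoted to $\oM$ by continuity, which is exactly the device that converts an operator on $M$ with smooth interior coefficients into a genuine element of $\Diff(\maV;\cdot,\cdot)$.
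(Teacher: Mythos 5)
Your proposal is correct, and its core mechanism is the same as the paper's: express everything in a local basis $Y_1,\ldots,Y_n$ of $\maV$, use closure under the Lie bracket and the smoothness of the resulting coefficients up to the boundary, and promote the classical interior identities to $\oM$ by density. The differences are in packaging. For $\nabla$, you exploit the output of Proposition \ref{prop.LC} directly---namely that $\nabla_{Y_i}Y_j\in\maV$, so the Christoffel symbols $\Gamma_{ij}^\ell$ are in $\CI(\oM)$ by uniqueness of the coefficients in a local basis---whereas the paper goes back to the defining formula \eqref{eq.def.nxy} with $Y=fY_0$ and reads off that the only derivative occurring is $X\in\maV$; your version is a clean and legitimate shortcut given that Proposition \ref{prop.LC} is already established. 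For $d$, your primary route (write $d$ as antisymmetrized covariant derivative of a torsion-free connection, then compose the already-treated $\nabla$ on $\Lambda^qA^*$ with a $\CI(\oM)$-linear bundle map) is valid but makes the statement about $d$ depend on the statement about $\nabla$ and on the extension of the connection to tensor bundles; the paper instead argues for $d$ directly from the Cartan formula, which keeps the two halves of the proposition independent and needs only that $\maV$ is closed under brackets. Your ``alternative'' argument for $d$ is in fact exactly the paper's proof. Nothing is missing; just be explicit, if you take the covariant-derivative route, that the induced $A$-connection on $\Lambda^qA^*$ again has coefficients smooth up to the boundary, which follows from the same Christoffel-symbol computation.
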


\begin{proof} 
The proof of this theorem is to see that the classical formulas for
these geometric operators extend to \m{\oM}, provided that \m{TM} is
replaced by \m{A}. For instance, for the de Rham differential, let
\m{\omega \in \Gamma(\oM; \Lambda\sp{k} A\sp{*})} and \m{X_0,\ldots,
  X_k \in \maV}, and use the formula
\begin{multline*}
  (d\omega)(X_0,\ldots, X_k) \ = \ \sum_{j=0}^q \, (-1)^j
  X_j(\omega(X_0, \ldots, \hat{X}_j, \ldots, X_k)) \ + \\
  \sum_{0 \le i < j \le q} \, (-1)^{i+j} \omega([X_i,X_j], X_0,
  \ldots, \hat{X}_i, \ldots, \hat{X}_j, \ldots, X_k) \in \CI(\oM) \,.
\end{multline*}
By choosing \m{X_0,\ldots, X_k} among a local basis of \m{\maV :=
  \Gamma(\oM; A)} and using the fact that \m{\maV} is closed under the
Lie bracket, we obtain that \m{d \in \Diff(\maV; \Lambda^q A^*,
  \Lambda^{q+1} A^*)}, as claimed.

For the Levi-Civita connection, it suffices to show that \m{\nabla_X
  \in \Diff(\maV; A)} for all \m{X \in \maV} (we could even restrict
\m{X} to a local basis, but this is not really necessary). We will
show that, in a local basis of \m{A}, the differential operator
\m{\nabla_X} is given by an operator involving only derivatives in
\m{\maV}. To this end, we shall use the formula \eqref{eq.def.nxy}
defining \m{\nabla_X}, but choose \m{Y = fY_0} for \m{f \in \CI(\oM)}
and \m{Y_0} and \m{Z} in a local basis in some neighborhood \m{V} of
an arbitrary point \m{p \in \oM}. Then we see from formula
\eqref{eq.def.nxy}, using also the linearity of \m{\phi} in \m{Z},
that \m{(\nabla_X(Y), Z) = fz_0 + X(f)z_1}, with \m{z_0} and \m{z_1}
smooth functions on the given neighborhood \m{V}. Since the only
derivative in this formula is \m{X} and \m{X \in \maV}, this proves
the desired statement for \m{\nabla}.
\end{proof}

Let us consider a vector bundle \m{E \to \oM}. If \m{E} has a metric,
then an \m{A}--connection \m{\nabla \in \Diff(\maV; E, E \otimes
  A\sp{*})} is said to {\em preserve the metric} if
\begin{equation}
  (\nabla_X(\xi), \zeta)_E + (\xi, \nabla_X(\zeta))_E \, = \, X(\xi,
  \zeta)_E
\end{equation}
for all \m{X \in \maV} and \m{\xi, \zeta \in \Gamma(\oM; E)}. In
particular, it follows from Proposition \ref{prop.LC} that the
extension of the Levi-Civita connection to an \m{A}--connection on
\m{A} preserves the metric used to define it. We then have the
following theorem

\begin{theorem}\label{thm.geometric} We continue to 
consider the fixed metric on \m{A} and its associated compatible
metric \m{g} on \m{M}. Let \m{E \to \oM} be a hermitian bundle with a
metric preserving \m{A}-connection. Then \m{\Delta_E :=
  \nabla\sp*\nabla \in \Diff(\maV; E)}.  Similarly,
\begin{equation*}
	\Delta_g \, := \, d\sp*d \, \in \, \Diff(\maV).
\end{equation*}
\end{theorem}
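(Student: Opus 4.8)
The plan is to deduce the theorem from two facts already in place, namely Proposition \ref{prop.geometric}, which puts the building blocks $d$ and $\nabla$ into the relevant operator classes, and the closure of the spaces $\Diff(\maV; E, F)$ under both formal adjoints (Remarks \ref{rem.L2} and \ref{rem.bundle}) and composition. Granting these, the proof is essentially a one-line assembly, so I would organize it as follows.

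First, for $\Delta_E = \nabla^*\nabla$: the metric-preserving $A$-connection $\nabla$ lies in $\Diff(\maV; E, E \otimes A^*)$ by Proposition \ref{prop.geometric}, where $E \otimes A^*$ is given the hermitian metric induced by the hermitian metric on $E$ and the fixed metric on $A$, both of which extend to $\oM$ (this is where compatibility of the metric is used). Its formal adjoint with respect to the $L^2(M; \vol_g)$ inner products determined by these metrics lies in $\Diff(\maV; E \otimes A^*, E)$ by Remark \ref{rem.bundle} --- the point being, exactly as in Remark \ref{rem.L2}, that the correction terms produced when integrating by parts involve only $\div(X) \in \CI(\oM)$ and the Christoffel-type coefficients of $\nabla$, all of which are smooth up to the boundary of $\oM$. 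Composing, $\Delta_E = \nabla^* \circ \nabla \in \Diff(\maV; E, E) = \Diff(\maV; E)$. The argument for $\Delta_g = d^*d$ is identical: $d \in \Diff(\maV; \Lambda^0 A^*, \Lambda^1 A^*)$ and hence $d^* \in \Diff(\maV; \Lambda^1 A^*, \Lambda^0 A^*)$, so $d^*d \in \Diff(\maV)$. If one wants the full Hodge Laplacian $dd^* + d^*d$ on $\Gamma(\oM; \Lambda^q A^*)$, the same reasoning applies term by term.

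I expect no genuine obstacle here; the real content of the statement is contained in Propositions \ref{prop.LC} and \ref{prop.geometric}, and this theorem is their formal corollary. The only step deserving a word of care is the assertion that passing to the formal adjoint keeps the operator inside $\Diff(\maV; \cdot,\cdot)$, rather than merely inside the differential operators on $M$; I would simply point to the integration-by-parts identity $\int_{M} X(f)\, d\vol_g = \int_{M} f\,\div(X)\, d\vol_g$ of Remark \ref{rem.L2}, which holds because the extended divergence $\div(X)$ is globally defined and smooth on $\oM$ by Proposition \ref{prop.LC}, together with the bundle version in Remark \ref{rem.bundle}. Finally, one should check --- routinely, from the local description of operators in $\Diff(\maV)$ in terms of a local basis of $\maV$ --- that these spaces are closed under composition, so that the products $\nabla^*\nabla$ and $d^*d$ indeed land in $\Diff(\maV; E)$ and $\Diff(\maV)$ respectively.
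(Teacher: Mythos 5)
Your argument is correct and is essentially the paper's own proof: the paper likewise deduces the theorem in one line from Proposition \ref{prop.geometric} together with the closure of $\Diff(\maV)$ and its bundle versions under formal adjoints (Remarks \ref{rem.L2} and \ref{rem.bundle}). Your additional remarks on why the adjoint stays in $\Diff(\maV;\cdot,\cdot)$ --- via the extended divergence $\div(X)\in\CI(\oM)$ --- only make explicit what the paper leaves implicit.
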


\begin{proof} This follows from the fact that \m{\Diff(\maV)}
and its vector bundle analogues are closed under formal adjoints and
from Proposition \ref{prop.geometric}.
\end{proof}

Thus, in order to study geometric operators on a Lie manifold, it is
enough to study the properties of differential operators generated by
\m{\maV}. It should be noted, however, that a Riemannian manifold may
have different compactifications to a Lie manifold. An example is
\m{\RR\sp{n}}, which can either be compactified to \m{([-1, 1]\sp{n},
  \maV_b)} (a product of manifolds with cylindrical ends) or it can be
radially compactified to yield an asymptotically euclidean
manifold. (See Example \ref{ex.three}.)

Theorem \ref{thm.geometric} also gives the following.

\begin{remark}
Similarly, since \m{d \in \Diff(\maV; E, F)}, we get that the Hodge
operator \m{d + d\sp{*} \in \Diff(\maV; \Lambda^{*} A^*)}, the same is
true for the signature operator.  More generally, let \m{W \to \oM} be
a Clifford bundle with admissible connection in \m{\Diff(\maV ; W, W
  \otimes A\sp{*})}. Then the associated Dirac operators are also
generated by \m{\maV}.  If $W$ is the Clifford bundle associated to a
Spin${}\sp{c}$-structure on $A$, then the Levi-Civita connection on
$W$ is in \m{\Diff(\maV ; W, W \otimes A\sp{*})}.  All these
statements seem to be more difficult to prove directly in local
coordinates.  See \cite{aln1, LNGeometric} for proofs.
\end{remark}

\subsection{Anisotropic structures}
It is very important in applications to extend the previous frameworks
to include anisotropic structures \cite{BNZ3D2}. We introduce them now
for the purpose of later use.

\begin{definition} An {\em anisotropic structure}
on an open manifold \m{(\oM, A)} with a Lie structure at infinity is
an open manifold manifold with a Lie structure at infinity \m{(\oM,
  B)} (same $\oM$) together with a vector bundle map \m{A \to B} that
is the identity over \m{M} and makes \m{\Gamma(A) = \Gamma(\oM; A)} an
ideal (in Lie algebra sense) of \m{ \Gamma(B) = \Gamma(\oM; B)}.
\end{definition}

We shall denote \m{\maW := \varrho_*(\Gamma(B))}, so \m{\maV :=
  \varrho_*(\Gamma(A))} satisfies \m{[X, Y] \subset \maV} for all \m{X
  \in \maW} and \m{Y \in \maV}. Recall the groups \m{\exp(\maV)} and
\m{\exp(\maW)} introduced at the end of Subsection \ref{ssec.liealg}
(and recall that they are generated by compactly supported vector
fields), then we have the following.

\begin{remark}
 The group \m{\exp(\maW)} acts on \m{\oM} by Lipschitz
 diffeomorphisms, it acts on \m{A} by Lie algebroid morphism, it acts
 on \m{\maV := \varrho_*(\Gamma(A))} by Lie algebra morphisms, and it
 acts on on \m{\Diff(\maV)} by algebra morphisms. Moreover,
\begin{equation*}
  \exp(\maV) \subset \exp(\maW)
\end{equation*}
is a normal subgroup.
\end{remark}

\section{Analysis on Lie manifolds\label{sec4}}

Our main interest is in the analytic properties of the differential
operators in \m{\Diff(\maV)}. In this section, we introduce our
function spaces following \cite{sobolev} and discuss Fredholm
conditions. Throughout this section, \m{(\oM, A)} will denote an open
manifold with a Lie structure at infinity and Lie algebroid \m{A} with
anchor \m{\varrho : A \to T\oM}. Also, by \m{\maV :=
  \varrho_*(\Gamma(A))} we shall denote the structural Lie algebra of
vector fields on \m{\oM}.

\subsection{Function spaces}
We review in this subsection the needed definitions of function
spaces. Let \m{(\oM, A)} be our given open manifold with a Lie
structure at infinity and let \m{g} be a compatible metric on the
interior \m{M} of \m{\oM} (that is, coming from a metric on \m{A}
denoted with the same letter, see Definition
\ref{def.compatible}). Let \m{\nabla} be the Levi-Civita connection
acting on the tensor powers of the bundles \m{A} and \m{A\sp{*}}. We
then define, for \m{m \in \ZZ_+}, the Sobolev spaces as in
\cite{amann, sobolev, nadine, HebeyBook}:
\begin{equation}\label{eq.def.Sobolev}
	H^m(M) \ = \ \{ u : M \to \CC, \ \nabla\sp{k} u \in L^2(M;
        A\sp{*\otimes k}), \ 0 \le k \le m \, \} \,.
\end{equation}

\begin{remark}
In general, the Sobolev spaces \m{H\sp{m}(M)} will depend on the
choice of the metric \m{g}, but if \m{\oM} is compact (that is, if
$(\oM, A)$ is a Lie manifold), then they are independent of the choice
of the metric, as we shall see below. It is interesting to notice that
if denote by \m{d\vol_g} the volume form (1-density) associated to
\m{g}. If \m{h} is another such compatible metric, then
\m{d\vol_h/d\vol_g} and \m{d\vol_g/d\vol_h} extend to smooth, bounded
functions on \m{\oM}. Hence the space \m{L^2(M) := L\sp{2}(M; \vol_g)}
is independent of the choice of the compatible metric \m{g}.
\end{remark}

The spaces \m{H\sp{m}(M)} behave well with respect to anisotropic
structures.

\begin{proposition}
Let \m{(\oM, A)} be an open manifold with a Lie structure at infinity
and with an anisotropic structure \m{(\oM, B)}, such that \m{\maW :=
  \Gamma(B) \supset \Gamma(A)}. Then \m{\exp(\maW)} acts by bounded
operators on \m{H\sp{m}(M)}.
\end{proposition}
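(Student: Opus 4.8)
The plan is to reduce the assertion to the action of one generator of $\exp(\maW)$ and to treat $m=0$ and $m\ge 1$ separately. Since boundedness of linear operators on $H^m(M)$ is stable under composition and inversion, and $\exp(\maW)$ is generated by the diffeomorphisms $\phi=\exp(X)$, $X\in\maW$ of compact support in $\oM$, together with their inverses $\exp(-X)$ (and $-X\in\maW$ too), it suffices to show that $\phi^{*}\colon u\mapsto u\circ\phi$ is bounded on $H^m(M)$ for each such $\phi$; note that $\phi$ is the identity off the compact set $\supp(X)$, and recall from the Remark at the end of Section \ref{sec3} that $\exp(\maW)$ acts by Lipschitz diffeomorphisms of $\oM$, by Lie algebroid automorphisms of $A$, and by algebra automorphisms of $\Diff(\maV)$. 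For $m=0$, i.e.\ for $L^2(M)=L^2(M;\vol_g)$, it is then enough that $\phi$ and $\phi^{-1}$ are Lipschitz for $g$: a bi-Lipschitz diffeomorphism has Jacobian with respect to $d\vol_g$ bounded above and below by positive constants, so $\phi^{*}$ is bounded and boundedly invertible on $L^2(M)$. (Alternatively one shows $\div(X)\in L^{\infty}(M)$ for $X\in\maW$ and uses the flow identity $(\phi^{t})^{*}d\vol_g=\exp\bigl(\int_0^t(\div X)\circ\phi^{s}\,ds\bigr)d\vol_g$; the boundedness of $\div(X)$ follows because $\exp(\maW)$ preserves $A$ as a Lie algebroid, so the distortion the flow produces on the top exterior power of $A$ is governed by a bundle automorphism equal to the identity off $\supp(X)$.)

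For $m\ge 1$ the plan is to transfer the estimate to the algebra $\Diff(\maV)$. Fix a finite set $P_1,\dots,P_N$ generating the $\CI(\oM)$-module $\Diff(\maV)_{\le m}$ of operators of order $\le m$ in $\Diff(\maV)$; such a set exists because $A$ embeds in a trivial bundle of finite rank, so $\maV$ is a finitely generated $\CI(\oM)$-module, whence the Poincar\'e-Birkhoff-Witt theorem of \cite{NWX} applies. By \cite{sobolev} one has an equivalence of norms
\begin{equation*}
  \|u\|_{H^m(M)}^{2}\ \asymp\ \sum_{i=1}^{N}\|P_i u\|_{L^2(M)}^{2}\,,
\end{equation*}
which is the point where the compatibility of $g$ with the Lie structure at infinity is used. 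Since $\exp(\maW)$ acts on $\Diff(\maV)$ by automorphisms, each $P_i':=(\phi^{-1})^{*}P_i\,\phi^{*}$ again lies in $\Diff(\maV)_{\le m}$, and since $P_i'-P_i$ has compactly supported coefficients (it vanishes off $\supp(X)$) one may write $P_i'=\sum_j a_{ij}P_j$ with bounded $a_{ij}\in\CI(\oM)$. Using the $m=0$ case for the middle inequality,
\begin{equation*}
  \|u\circ\phi\|_{H^m}^{2}\ \asymp\ \sum_i\|P_i(u\circ\phi)\|_{L^2}^{2}\ =\ \sum_i\|(P_i' u)\circ\phi\|_{L^2}^{2}\ \le\ C\sum_i\|P_i' u\|_{L^2}^{2}\ \le\ C'\sum_j\|P_j u\|_{L^2}^{2}\ \asymp\ \|u\|_{H^m}^{2}\,,
\end{equation*}
so $\phi^{*}$ is bounded on $H^m(M)$; applying this to $\phi$ and to $\phi^{-1}$ gives that the action is by bounded invertible operators.

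The part I expect to be the real content, as opposed to bookkeeping, is the two inputs borrowed above: that $\exp(\maW)$ genuinely acts on $\Diff(\maV)$ by automorphisms--which rests on $\phi$ being a diffeomorphism of $\oM$ respecting its corner structure and carrying $\maV$ into $\maV$, hence ultimately on the ideal condition $[\maW,\maV]\subseteq\maV$ built into the notion of an anisotropic structure--and the norm equivalence above in the possibly non-compact situation considered here. Both are available in the works quoted in the paper (\cite{aln1}, \cite{sobolev}, \cite{ASkandalis1}). The second input can also be bypassed: differentiating the identity $d(\phi^{*}u)=\phi^{\sharp}(du)$ (where $\phi^{\sharp}$ is the action on $\Gamma(\oM;A^{*})$ induced by the algebroid automorphism covering $\phi$, and $\nabla$ reduces to $d$ on functions) and noting that the lower-order tensors occurring at each stage--the difference of $\nabla$ and the Levi-Civita connection of $\phi^{*}g$, together with its covariant derivatives--vanish off $\supp(X)$ and are therefore bounded, one obtains $\|\nabla^{k}(\phi^{*}u)\|_{L^2}\le C_k\sum_{j\le k}\|\nabla^{j}u\|_{L^2}$ directly.
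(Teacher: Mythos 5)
Your proof is correct and is essentially the paper's argument: the paper's entire proof consists of the single observation that $\exp(\maW)$ is generated by vector fields with compact support in $\oM$, and your write-up supplies exactly the details that this sentence leaves implicit (reduction to one generator $\phi=\exp(X)$, bi-Lipschitz control for the $L^2$ case, and bounded distortion of the higher derivatives coming from the fact that $\phi$ is the identity off $\supp(X)$). The one caveat is that the norm equivalence $\|u\|_{H^m}^2\asymp\sum_i\|P_iu\|_{L^2}^2$ you quote from the Sobolev-spaces reference is established there for compact $\oM$, whereas the proposition concerns a possibly noncompact open manifold with a Lie structure at infinity; your closing ``bypass'' via $\nabla^k(\phi^{*}u)$, which uses only that all correction tensors vanish off the compact set $\supp(X)$, sidesteps this and is the safer (and evidently intended) route.
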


\begin{proof}
This follows from the fact that \m{\exp(\maW)} is generated by vector
fields with compact support in \m{\oM}.
\end{proof}

We now consider some alternative definitions of these Sobolev spaces
in particular cases. We first consider the case of complete manifolds
\cite{amann, sobolev, nadine, HebeyBook, Hebey1}.

\begin{remark} 
Let us assume that \m{(\oM, A)} and the compatible metric \m{g} are
such that \m{M} is complete and let \m{\Delta_g} be (positive)
Laplacian associated to the metric \m{g}. Then \m{H^s(M)} coincides
with the domain of \m{(1 + \Delta_g)^{s/2}} (we use the geometer\rp s 
Laplacian, which is positive).
\end{remark}

In the bounded geometry case we can consider partitions of unity.

\begin{remark}
Let us assume that \m{(\oM, A)} and the compatible metric \m{g} are
such that \m{M := \oM \smallsetminus \pa \oM} is of bounded geometry.
Then the definition of the Sobolev spaces on \m{M} can be given using
a choice of partition of unity with bounded derivatives as in
\cite{sobolev}, for example, to patch the locally defined classical
Sobolev spaces. See also \cite{amann, sobolev, Disconzi, nadine,
  kordyukovLp1, kordyukovLp2, ShubinAsterisque}.
\end{remark}

Finally, if \m{\oM} is an open subset of a Lie manifold, we have yet
the following definition.

\begin{remark}
Let \m{U \subset \oM} be an open subset of a Lie manifold $(\oM, A)$
(so $\oM$ is compact) and let, as usual, $\maV$ denote the structural
Lie algebra of vector fields $\Gamma(\oM, A)$, then \cite{aln1}
\begin{equation}\label{eq.def.Sobolev2}
	H^m(U) \ := \ \{ u : U \to \CC, \ X_1 X_2 \ldots X_k u \in
        L^2(U), \ k \le m, \ X_j \in \maV\} \,,
\end{equation}
so the Sobolev spaces \m{H\sp{m}(U)} will be independent of the chosen
compatible metric on the Lie manifold.
\end{remark}

Each of theses definitions of the Sobolev spaces has its own
advantages and disadvantages.  For instance, the definition
\eqref{eq.def.Sobolev2} has the advantage that it immediately gives
the boundedness of operators \m{P \in \Diff(\maV)}, see Lemma
\ref{lemma.bg}. Let \m{H^{-s}(M) := (H^s(M))^*} and extend the
definition of Sobolev space to \m{s} non-integer by interpolation.

\begin{lemma}\label{lemma.bg}
Let us assume that \m{A} is endowed with a metric such that the
resulting metric \m{g} on \m{TM \subset A} is of bounded geometry. Let
\m{P \in \Diff(\maV)} of order \m{\ord(P) \le m} and with coefficients
that are compactly supported in $\oM$. Then the map \m{P : H^{s}(M)
  \to H^{s-m}(M)} is bounded for all \m{s \in \RR}.
\end{lemma}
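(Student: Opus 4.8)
The plan is to prove the integer case $s = m + j$ with $j \ge 0$ directly, by a Leibniz computation in the Levi--Civita connection $\nabla$, and then to reach all real $s$ by the usual duality and complex interpolation. First I would make $P$ explicit. Since $\maV = \varrho_*(\Gamma(\oM;A))$ is a finitely generated $\CI(\oM)$-module (Proposition \ref{prop.LieManifold}(iii)), fix global generators $Y_1,\dots,Y_N \in \maV$; moving all function factors to the left using $[Y_i,f] = Y_i(f) \in \CI(\oM)$, one writes $P$ as a finite sum
\[
  P \ = \ \sum_{|I| \le m} a_I\, Y^I \,, \qquad Y^I := Y_{i_1}Y_{i_2}\cdots Y_{i_{|I|}} \,,
\]
and the standing hypothesis that the coefficients of $P$ are compactly supported in $\oM$ means precisely that such an expression exists with all $a_I \in \CIc(\oM)$. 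It then suffices to show, for the relevant range of a real parameter $t$: \textbf{(a)} multiplication by any $a \in \CIc(\oM)$ is bounded $H^t(M) \to H^t(M)$; and \textbf{(b)} each generator $Y_i$ is bounded $H^t(M) \to H^{t-1}(M)$; because $P$ is then a finite sum of compositions of such maps.

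For \textbf{(a)} and \textbf{(b)} in the case $t = j$, resp.\ $t = j+1$, with $j \in \ZZ_+$, I would use the definition \eqref{eq.def.Sobolev} of $H^j(M)$ via $\nabla$. The Leibniz rule for $\nabla$ expresses $\nabla^k(au)$ as a sum of contractions of $(\nabla^{k_1}a) \otimes (\nabla^{k_2}u)$ with $k_1 + k_2 = k$; each $\nabla^{k_1}a$ is a smooth section of $A^{*\otimes k_1}$ over $\oM$ whose support lies in the compact set $\supp a$, hence bounded, so summing over $k \le j$ gives $\|au\|_{H^j(M)} \le C_a\|u\|_{H^j(M)}$. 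This is the one place where compact support \emph{in $\oM$}, not merely in $M$, is used: covariant differentiation and contraction never enlarge supports, and a compactly supported continuous section of a bundle over $\oM$ is automatically bounded on all of $M$. For \textbf{(b)}, writing $Y_i u = \langle du, Y_i\rangle$ and applying Leibniz again gives $\nabla^k(Y_i u) = \sum_l \langle \nabla^{k+1-l}u, \nabla^l Y_i\rangle$; here I invoke the bounded geometry hypothesis, which is exactly what guarantees that the generators $Y_i$, regarded as sections of $A$, together with all of their covariant derivatives $\nabla^l Y_i$, are bounded on $\oM$ (equivalently, $M$ has bounded geometry relative to the Lie structure; see \cite{sobolev}). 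Hence $\|Y_i u\|_{H^j(M)} \le C\|u\|_{H^{j+1}(M)}$. Composing, $P : H^{m+j}(M) \to H^{j}(M)$ is bounded for every $j \in \ZZ_+$, i.e.\ $P : H^s(M) \to H^{s-m}(M)$ is bounded for every integer $s \ge m$.

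To pass to all $s \in \RR$, I would first observe that the formal adjoint has the same shape: by Remark \ref{rem.L2}, $\sh{P} \in \Diff(\maV)$, and expanding $\sh{(a_I Y^I)}$ via $\sh{Y_i} = -Y_i + \div(Y_i)$ with $\div(Y_i) \in \CI(\oM)$ shows that every monomial of $\sh{P}$ still carries a factor equal to $\overline{a_I}$ or to a $\maV$-derivative of $\overline{a_I}$, so $\sh{P}$ too has coefficients compactly supported in $\oM$. The passage to arbitrary real $s$ is then routine: complex interpolation between consecutive integers handles all real $s \ge m$ (applied to $P$); duality, using $H^{-t}(M) = (H^t(M))^*$ and $\sh{(\sh{P})} = P$, transfers the estimates for $\sh{P}$ into boundedness of $P$ for all $s \le 0$; and a final interpolation between $s = 0$ and $s = m$ covers the remaining range.

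The hard part is step \textbf{(b)}: for a general open manifold with a Lie structure at infinity a structural vector field need not have bounded covariant derivatives, and then $P$ genuinely fails to be bounded between the $\nabla$-Sobolev spaces, so the real content of the lemma is precisely that bounded geometry makes the finitely many generators $Y_i$ and all their $\nabla^l Y_i$ bounded; granting that input (\cite{sobolev}), everything else is Leibniz together with abstract duality and interpolation. I would also remark that when $\oM$ happens to be an \emph{open subset of a compact} Lie manifold one can shortcut the whole argument, since then $H^m(M)$ admits the description \eqref{eq.def.Sobolev2} by iterated $\maV$-derivatives, which makes both \textbf{(a)} and \textbf{(b)} immediate and uniform.
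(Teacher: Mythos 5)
Your argument is correct in outline but follows a genuinely different route from the one in the paper. The paper's proof does not compute with the connection at all: it localizes \m{P} to geodesic balls of a fixed radius \m{r} via the exponential map (the radius being chosen, using Lemma \ref{lemma.distance}, smaller than both the injectivity radius and the distance from the support of the coefficients to the complement of a compact neighborhood \m{L}), and then invokes Shubin's calculus of uniform operators on manifolds of bounded geometry \cite{ShubinAsterisque}: boundedness on \m{H^s} for all real \m{s} follows once the pulled-back coefficients are uniformly bounded together with all their derivatives, and that uniformity is precisely the content of Lemma \ref{lemma.compact} (the family \m{p \mapsto P_p} extends to a compactly supported smooth family over \m{\oM}). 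Your route---Leibniz in \m{\nabla} for integer \m{s \ge m}, then adjoints, duality, and interpolation---is more self-contained and makes visible exactly where the compact support of the coefficients enters, at the price of redoing by hand what Shubin's results deliver at once (in particular the non-integer and negative values of \m{s}).

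One step of your argument needs repair. You assert that the bounded geometry of \m{g} ``is exactly what guarantees'' that the global generators \m{Y_i} and all their covariant derivatives \m{\nabla^l Y_i} are bounded. That is not what bounded geometry says: it is a condition on the curvature and the injectivity radius, not on an arbitrarily chosen finite generating set of the \m{\CI(\oM)}-module \m{\maV}, and for non-compact \m{\oM} a smooth section of \m{A} over \m{\oM} need not be bounded. (For compact \m{\oM}, i.e.\ for a genuine Lie manifold, the point is moot, since every smooth section over \m{\oM} and all of its covariant derivatives are then automatically bounded.) The fix is to use the compact support of the coefficients once more: choose \m{\chi \in \CIc(\oM)} equal to \m{1} near \m{\bigcup_I \supp a_I} and replace each \m{Y_i} by \m{\chi Y_i}; commuting the cutoffs past the derivatives produces only lower-order correction terms whose coefficients are again compactly supported, and the vector fields \m{\chi Y_i}, being compactly supported smooth sections of \m{A} over \m{\oM}, do have all covariant derivatives bounded. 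With that modification (and the observation, which you make correctly, that \m{\sh{P}} again has compactly supported coefficients), your proof goes through.
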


Note that if $\oM$ is compact (i.e. $(\oM, A)$ is a Lie manifold),
then all \m{P \in \Diff(\maV)} have compactly supported coefficients.
Let us denote by \m{(E)_r} the set of vectors of length \m{<r}, where
\m{E} is a real or complex vector bundle endowed with a metric.

\begin{proof} 
Let \m{K \subset \oM} be a compact subset such that the coefficients
of \m{P} are zero outside \m{K}. Let us choose a compact neighborhood
\m{L} of \m{K} in \m{\oM} and let \m{r_0} be the distance from \m{K}
to the complement of \m{L} in a metric \m{h} on \m{\oM} such that \m{h
  \le g}, which exists by Lemma \ref{lemma.distance}.  Then \m{r_0 >
  0}, because \m{K} is compact.  Moreover, the distance from \m{K} to
the complement of \m{L} in the metric \m{g} is \m{\ge r_0} since \m{h
  \le g}. Let us fix $r$ less than the injectivity radius of \m{M} and
with \m{r_0 > r > 0}. For every \m{p \in M}, we then consider the
exponential map \m{\exp: T_p M \to M}, which is a diffeomorphism from
(the open ball of radius \m{r}) \m{(T_pM)_r} onto its image. Thus
\m{P} gives rise to a differential operator \m{P_p} on each of the
open balls \m{(T_pM)_r}. Using the results of \cite{ShubinAsterisque},
it suffices to show that the coefficients of \m{P} in any of these
balls of radius \m{r} are uniformly bounded. Indeed, this is a
consequence of the following lemma, where the support of the resulting
map is contained in \m{L}.
\end{proof}

The following lemma (see \cite{aln2}) underscores the additional
properties that the Lie manifolds enjoy among the class of all
manifolds with bounded geometry.

\begin{lemma}\label{lemma.compact} Let us use the notation of 
the proof of the previous lemma and denote for any \m{p \in M} by
\m{P_p} the differential operator on \m{(T_pM)_r} induced by the
exponential map. Then the map \m{M \ni p \to P_p} extends to a
compactly supported smooth function defined on \m{\oM} such that
\m{P_p} is a differential operator on \m{(A_p)_r}.
\end{lemma}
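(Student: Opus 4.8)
The plan is to reduce, by a partition of unity, to a local computation in a local basis of $\maV$, and then to recognise both the exponential map and the transport of vector fields along geodesics as solutions of ordinary differential equations whose coefficients --- components of the anchor and Christoffel symbols of the extended Levi-Civita connection --- are smooth functions on all of $\oM$, boundary included.

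First I would cover the compact support $K\subset\oM$ of the coefficients of $P$ by finitely many charts $U_j$, each carrying a local basis $Y^{(j)}_1,\dots,Y^{(j)}_n\in\maV$ of $A$ (orthonormal for the fixed metric on $A$), together with a subordinate partition of unity $\chi_j$. By the Poincar\'e--Birkhoff--Witt remark we may write $\chi_j P=\sum_{|\alpha|\le m}a^{(j)}_\alpha (Y^{(j)}_1)^{\alpha_1}\cdots(Y^{(j)}_n)^{\alpha_n}$ with $a^{(j)}_\alpha\in\CIc(U_j)$; since pullback by a diffeomorphism is an algebra homomorphism on differential operators, it then suffices to prove that, for $p\in M$, both $a^{(j)}_\alpha\circ\widetilde{\exp}_p$ and $\widetilde{\exp}_p^{*}Y^{(j)}_i$ depend smoothly on $p$ and extend smoothly across $\pa\oM$, where $\widetilde{\exp}_p:=\exp^g_p\circ\varrho_p:(A_p)_r\to M$ (using that $\varrho_p$ is an isometry of $(A_p,g)$ onto $(T_pM,g)$, and identifying $T_v(A_p)$ with $A_p$); this is also the natural way to view the given $P_p$ as living on $(A_p)_r$.

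Next I would invoke the standard fact (see \cite{aln1, aln2}) that the geodesic flow of $g$ extends smoothly to $\oM$: in a chart, expressed in the frame $Y_i$, it is the flow of the vector field $\sum_{i,\mu}w^iY^\mu_i(x)\,\pa_{x^\mu}-\sum_{i,j,k}\Gamma^k_{ij}(x)\,w^iw^j\pa_{w^k}$ on $U\times\RR^n$, whose coefficients are smooth on $U$ --- the $Y^\mu_i$ because $Y_i\in\maV$, the $\Gamma^k_{ij}$ by Proposition \ref{prop.LC} --- and which is tangent to the boundary because $\maV\subset\maV_b$. Hence, shrinking $r$ if necessary, $\widetilde{\exp}$ extends to a smooth map $(A)_r\to\oM$ that restricts on each fibre $(A_p)_r$, $p\in M$, to a diffeomorphism onto its image; this already settles the functions $a^{(j)}_\alpha$ and, since for $p\in M$ that image is the $g$-ball $B_g(p,r)$, it confines the support of the resulting family to the set $\{\,p\in\oM:\ \mathrm{dist}_h(p,K)\le r\,\}$, which lies in the compact set $L$ by the choice $r<r_0$ from the previous proof. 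For the vector fields, the hard point, I would pass to the $A$-valued Jacobi field map $\Xi$: for $v\in(A_p)_r$ and $w\in A_p$ let $\Xi(p,v,w):=J_w(1)\in A_{\widetilde{\exp}(p,v)}$, where $J_w$ is the Jacobi field of the $A$-connection along the $A$-geodesic $s\mapsto\widetilde{\exp}(p,sv)$ with $J_w(0)=0$, $\nabla_{\dot\gamma}J_w(0)=w$. The Jacobi equation has coefficients built from the curvature of the $A$-connection, which by the remark after Proposition \ref{prop.LC} extends smoothly to $\oM$ with all its covariant derivatives; so $\Xi$ is smooth on $(A)_r\times_{\oM}A$, linear in $w$, and --- for $r$ small enough, the curvature being bounded on the compact $L$ --- the map $\Xi(p,v,\cdot)$ is a linear isomorphism for every $(p,v)$. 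Since $(d\widetilde{\exp}_p)_v=\varrho_{\widetilde{\exp}_p(v)}\circ\Xi(p,v,\cdot)$ over $M$, one gets $\widetilde{\exp}_p^{*}Y_i(v)=\Xi(p,v,\cdot)^{-1}\big(\tilde Y_i(\widetilde{\exp}(p,v))\big)$ for $p\in M$, where $\tilde Y_i\in\Gamma(\oM;A)$ satisfies $\varrho(\tilde Y_i)=Y_i$; the right-hand side is a vector field on $(A_p)_r$ that is defined and smooth in $p$ on all of $\oM$. Assembling the pieces, $P_p:=\sum_j\sum_{|\alpha|\le m}(a^{(j)}_\alpha\circ\widetilde{\exp}_p)\,(\widehat{Y}^{(j)}_{1,p})^{\alpha_1}\cdots(\widehat{Y}^{(j)}_{n,p})^{\alpha_n}$, with $\widehat{Y}_{i,p}$ the vector field just described, is the desired smooth, compactly supported extension, and each $P_p$ is a differential operator of order $\le m$ on $(A_p)_r$.

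The step I expect to be the main obstacle is exactly the extension across $\pa\oM$ of $\widetilde{\exp}$ and of $\Xi$ while keeping $\Xi(p,v,\cdot)$ invertible. Over $M$ this is classical, but at a boundary point $q$ the anchor $\varrho_q$ is not injective, so $\widetilde{\exp}_q$ may be very degenerate (even constant, as for manifolds with cylindrical ends) and $d\widetilde{\exp}_q$ cannot be inverted directly; passing to the $A$-valued objects is what removes this degeneracy, and it works precisely because the governing ODEs have coefficients smooth up to the boundary --- the content of $\maV\subset\maV_b$ together with Proposition \ref{prop.LC} and its corollary on curvature. Granting that, the remainder is routine bookkeeping with pullbacks and partitions of unity.
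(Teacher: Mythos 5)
Your argument is correct. The paper states this lemma without proof, deferring to \cite{aln2}, and your route---writing $P$ in a local orthonormal basis of $\maV$ via the Poincar\'e--Birkhoff--Witt remark, extending the geodesic spray of the $A$-connection across $\pa \oM$ (its coefficients being the components of the anchor and the Christoffel symbols supplied by Proposition \ref{prop.LC}), and then replacing the degenerate $d\widetilde{\exp}$ at boundary points by the $A$-valued Jacobi-field isomorphism $\Xi(p,v,\cdot)$---is exactly the mechanism behind the cited construction, with the support estimate via $h\le g$ and $r<r_0$ correctly confining the family to the compact set $L$.
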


We define the {\em anisotropic} Sobolev spaces in a similar way.

\begin{remark}
Let \m{(\oM, A)} be a Lie manifold with an anisotropic structure
\m{(\oM, B)}, and let \m{\maW := \Gamma(B) \supset \Gamma(A)}.  Then
we define
\begin{multline}\label{eq.def.Sobolev3}
	H^{m+q}_{\maW}(M) \ := \ \{ u : M \to \CC, \ X_1 X_2 \ldots
        X_k Y_1, Y_2 \ldots Y_l u \in L^2(M), \\
          \text{for all }\ X_j \in \maW \, , \ 0 \le j \le k \le m,
          \ \mbox{ and for all }\ Y_i \in \maV \,, \ 0 \le i \le l \le
          q \, \} \,.
\end{multline}
The spaces \m{H_{\maW}\sp{m+q}(M)} are again independent of the chosen
compatible metric on the Lie manifold.
\end{remark}

\subsection{Pseudodifferential operators on Lie manifolds}

Let us begin by recalling the definition of a tame submanifold with
corners from \cite{aln1} (we changed slightly the terminology).

\begin{definition}\label{def.sub.corners}
Let $\oM$ be a manifold with corners and $L \subset \oM$ be a
submanifold. We shall say that $L$ is a {\em tame submanifold with
  corners} of $\oM$ if $L$ is a manifold with corners (in its own)
that intersects transversely all faces of $\oM$ and such that each
open face $F_0$ of $L$ is the open component of a set of the form $F
\cap L$, where $F$ is an open face of $\oM$ (of the same codimension
as $F_0$).
\end{definition}

The closed faces of a manifold with corners $\oM$ are thus {\em not}
tame submanifolds with corners of $\oM$ even if they happen to be
manifolds with corners. Also, the diagonal of the $n$-dimensional cube
$[-1, 1]^n$ is not a tame submanifold with corners. However, $\{0\}
\times [-1, 1]^{n-1}$ is a tame submanifold with corners of $[-1,
  1]^n$. In fact, all tame submanifolds with corners \m{L \subset \oM}
have a tubular neighborhood \cite{sobolev, aln2}. This tubular
neighborhood allows us{ then to define the space \m{I\sp{m}(\oM, L)}
of {\em classical conormal distributions} as in \cite{hormander3} or
as in \cite{NWX} for manifolds with corners as follows.  Let $V \to X$
be a real vector bundle. A distribution on $V$ is (classically) {\em
  conormal} to $X$ if its fiberwise Fourier transform is a {\em
  classical} symbol on $V\sp{*} \to M$. We shall denote the set of
these distributions corresponding to a symbol of order $\le m$ by
$I\sp{m}(V, M)$. We shall denote by $I_c\sp{m}(V, M) \subset
I\sp{m}(V, M)$ the subset of conormal distributions with compact
support. We extend the definition of $I_c\sp{m}(V, M)$ to the case of
a tame submanifold $M \subset V$ by localization.

Let us fix a compatible metric on \m{M}, the interior of our open
manifold with a Lie structure at infinity \m{(\oM, A)}. Also, let us
fix \m{r>0} less than the injectivity radius of \m{M}. As in the proof
of Lemma \ref{lemma.bg}, the exponential map then defines a
diffeomorphism from the set \m{(TM)_r} of vectors of length \m{< r} to
\m{(M \times M)_r}, which is an open neighborhood of the diagonal in
\m{M \times M}.  This allows us to define a natural bijection
\begin{equation}
    \Phi : I_c\sp{m} ((TM)_r, M) \to I_c\sp{m} ((M \times M)_r, M) \,,
\end{equation}
Similarly, we obtain by restriction an inclusion
\begin{equation}
    I_c\sp{m} ((A)_r, \oM) \to I\sp{m} ((TM)_r, M) \,.
\end{equation}
Recall the group of diffeomorphisms \m{\exp(\maV)} defined at the end
of Subsection \ref{ssec.liealg}. Then we define as in \cite{aln2}
\begin{equation}
  \Psi_{\maV}\sp{m}(M) \ := \ \Phi(I_c\sp{m} ((A)_r, \oM)) \, + \,
  \Phi(I_c\sp{-\infty} ((A)_r, \oM))\exp(\maV) \,.
\end{equation}

Then $\Psi_{\maV}\sp{m}(M)$ is independent of the parameter \m{r > 0}
or the compatible metric $g$ used to define it and we have the
following result \cite{aln2}.

\begin{theorem} Let $(\oM, A)$ be a Lie manifold, $M := 
\oM \smallsetminus \pa \oM$ and $\maV := \varrho_*(\Gamma(A))$.  We
have \m{\Psi_{\maV}\sp{m}(M)\Psi_{\maV}\sp{m\rp}(M) \subset
  \Psi_{\maV}\sp{m+m\rp}(M)}. The subspace \m{\Psi_{\maV}\sp{m}(M)} is
closed under adjoints and the principal symbol \m{\sigma_m :
  \Psi_{\maV}\sp{m}(M) \to S_{cl}\sp{m}(A\sp*)/S_{cl}\sp{m-1}(A\sp*)}
is surjective with kernel \m{\Psi_{\maV}\sp{m-1}(M)}.  Moreover, any
\m{P \in \Psi_{\maV}\sp{m}(M)} defines a bounded operator
\m{H\sp{s}(M) \to H\sp{s-m}(M)}. If an anisotropic structure with
structural vector fields $\maW$ is given, then the group
\m{\exp(\maW)} acts by degree preserving automorphisms on
\m{\Psi_{\maV}\sp{m}(M)}.
\end{theorem}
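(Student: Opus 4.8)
The plan is to reduce every assertion to a local statement near a point $p\in\oM$, using a locally finite cover of $\oM$ by coordinate charts in which $\maV$ has a local basis $Y_1,\dots,Y_n\in\maV$ (such bases exist by Remark~\ref{rem.localBasis}) together with a subordinate partition of unity, and to treat the residual subspace $\Psi_{\maV}^{-\infty}(M):=\Phi(I_c^{-\infty}((A)_r,\oM))\exp(\maV)$ separately. The structural facts I will use repeatedly are: (a) $\maV$ is closed under the Lie bracket and is a finitely generated projective $\CI(\oM)$-module, so iterated brackets of the $Y_i$ expand in the local basis with coefficients in $\CI(\oM)$; (b) by the Poincar\'e--Birkhoff--Witt theorem of \cite{NWX}, every operator in $\Diff(\maV)$ of order $\le k$ is locally $\sum_{|\alpha|\le k}a_\alpha Y^\alpha$ with $a_\alpha\in\CI(\oM)$; (c) $\sh{Y_i}=-Y_i+\div(Y_i)\in\Diff(\maV)$ by Remark~\ref{rem.L2}; and (d) $\Psi_{\maV}^m(M)$ is independent of $r$, so support tubes of Schwartz kernels may be enlarged at will. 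Since $\exp(X)$ is a diffeomorphism of $M$ for $X\in\maV$, operators $R\exp(X)$ with $R$ smoothing are again smoothing, so $\Psi_{\maV}^{-\infty}(M)$ consists of honest smoothing operators on $M$; it is then routine that it is a two-sided ideal stable under adjoints and that $\exp(X)\,\Phi(I_c^{-\infty}((A)_r,\oM))=\Phi(I_c^{-\infty}((A)_r,\oM))\,\exp(X)$.

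\emph{Composition and adjoints.} Modulo $\Psi_{\maV}^{-\infty}(M)$, an operator $P\in\Phi(I_c^m((A)_r,\oM))$ is, in a local basis, the $\maV$-quantization $a(x,Y)$ of a classical symbol $a\in S_{cl}^m$, and likewise $Q$ corresponds to $b\in S_{cl}^{m'}$. The usual oscillatory-integral composition formula produces an asymptotic expansion for the full symbol of $PQ$ whose terms are built from $a$, $b$, their derivatives, and iterated Lie brackets of the $Y_i$; by (a) these all lie in $\CI(\oM)$ and $\maV$, so $PQ$ again has a conormal full symbol extending smoothly up to $\oM$, i.e.\ $PQ\in\Phi(I_c^{m+m'}((A)_r,\oM))$ up to a smoothing remainder. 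That remainder, together with the off-diagonal part of $PQ$ (smooth by pseudolocality) and any accompanying factors of $\exp(\maV)$, is absorbed into $\Psi_{\maV}^{-\infty}(M)$ using $\exp(X)\exp(Y)\in\exp(\maV)$, the identity at the end of the previous paragraph, and (d) to shrink the tube. For the adjoint, $\sh{(a(x,Y))}$ is computed the same way, now using (c), and conormality and the smooth extension to $\oM$ are again preserved; the residual part is manifestly stable under $P\mapsto\sh{P}$ (adjoints taken with respect to $L^2(M)$).

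\emph{The principal-symbol sequence.} The principal symbol is the leading homogeneous term of the fibrewise Fourier transform of the conormal kernel; its classical transformation law shows it is well defined as $\sigma_m(P)\in S_{cl}^m(A^*)/S_{cl}^{m-1}(A^*)$, independently of the local basis, and $\sigma_{m+m'}(PQ)=\sigma_m(P)\sigma_{m'}(Q)$ is read off the leading term of the composition formula. Surjectivity: given $a\in S_{cl}^m(A^*)$, quantize $a$ in each chart of the cover by a compactly supported conormal distribution on $(A)_r$ and patch with the partition of unity; the result lies in $\Phi(I_c^m((A)_r,\oM))$ and has principal symbol $a$. Exactness in the middle: $\sigma_m(P)=0$ forces the full symbol of the near-diagonal part of $P$ to drop one order, so $P\in\Phi(I_c^{m-1}((A)_r,\oM))+\Psi_{\maV}^{-\infty}(M)=\Psi_{\maV}^{m-1}(M)$, while conversely every element of $\Psi_{\maV}^{m-1}(M)$, and every smoothing operator, has vanishing $\sigma_m$.

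\emph{Boundedness, anisotropic invariance, and the main obstacle.} Using that $H^s(M)$ is the domain of $\Delta_g^{s/2}$ (up to lower order) and that the calculus contains the complex powers of $\Delta_g$ (a standard Seeley-type fact), together with duality $H^{-s}(M)=H^s(M)^*$, interpolation, and the composition property, the boundedness claim reduces to $L^2(M)$-boundedness of $P\in\Psi_{\maV}^0(M)$. Write $P=P_0+\sum_j R_j\exp(X_j)$ with $P_0\in\Phi(I_c^0((A)_r,\oM))$ and the $R_j$ smoothing: the $\exp(X_j)$ are bounded on every $H^m(M)$ by the proposition asserting that $\exp(\maW)$ acts by bounded operators on $H^m(M)$ (applied with $\maW=\maV$), the $R_j$ are smoothing with kernels uniformly supported near the diagonal, hence bounded on $L^2(M)$ by bounded geometry, and $P_0$ is, by Lemma~\ref{lemma.compact}, a uniform family of order-$0$ classical pseudodifferential operators on the balls $(A_p)_r$ with uniformly bounded symbols, so Calder\'on--Vaillancourt applied uniformly, plus the partition-of-unity patching of \cite{ShubinAsterisque} used in the proof of Lemma~\ref{lemma.bg}, give $L^2(M)$-boundedness. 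For the anisotropic invariance, let $g:=\exp(X)$, $X\in\maW$; by the remark following the definition of anisotropic structure, $g$ acts on $\oM$ by a diffeomorphism, on $A$ by a Lie-algebroid automorphism, and normalizes both $\maV$ and $\exp(\maV)$, so $P\mapsto g_*Pg_*^{-1}$ pulls a conormal distribution on $(A)_r$ back to one on $(A)_{r'}$ (use $r$-independence), multiplies $\sigma_m$ by the pullback $g^*$ (preserving the order), and sends $\Phi(I_c^{-\infty}((A)_r,\oM))\exp(Y)$ to $\Phi(I_c^{-\infty}((A)_r,\oM))\exp(g_*Y)$ with $g_*Y\in\maV$; hence $\exp(\maW)$ acts by degree-preserving automorphisms. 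I expect the composition step to be the real obstacle: showing that the near-diagonal composite has a full symbol that extends smoothly all the way to $\oM$ (not merely a conormal distribution on $M\times M$) is precisely where the Lie-algebroid structure --- Serre--Swan together with closure of $\maV$ under brackets, packaged in the Poincar\'e--Birkhoff--Witt theorem --- is indispensable, and where the off-diagonal and $\exp(\maV)$ bookkeeping and the $r$-independence must be used with care.
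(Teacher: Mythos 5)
Your route is genuinely different from the paper's: the paper gives no direct proof at all, but states that the theorem is obtained by realizing \m{\Psi_{\maV}\sp{*}(M)} as the image, under the vector representation, of the groupoid pseudodifferential calculus \m{\Psi\sp{*}(\maG)} of \cite{NWX, Monthubert, MonthubertPierrot} for {\em any} Lie groupoid \m{\maG} integrating \m{A} (integrability being supplied by \cite{Debord1, Debord2, NistorJapan}), and refers to \cite{aln2} for the details. There the composition, adjoint, and symbol-exactness properties are inherited from the groupoid calculus, where they follow from the standard composition theorem for compactly supported conormal distributions on the manifold with corners \m{\maG}.

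This is not merely a stylistic difference: your direct argument has a genuine gap exactly at the point you flag as ``the real obstacle,'' and the groupoid detour exists precisely to close it. The asymptotic expansion of the full symbol of \m{PQ} determines \m{PQ} only modulo operators that are smoothing {\em on} \m{M}; but the residual space of the calculus is not ``all smoothing operators on \m{M}''--it is the very restrictive class \m{\Phi(I_c\sp{-\infty}((A)_r,\oM))\exp(\maV)}, consisting of kernels that, after pullback by the exponential map, extend {\em smoothly up to} \m{\pa\oM} with {\em compact support in} \m{\oM}, composed with diffeomorphisms in \m{\exp(\maV)}. Your sentence ``that remainder \dots is absorbed into \m{\Psi_{\maV}^{-\infty}(M)}'' asserts the conclusion rather than proving it: neither the stationary-phase remainder nor the off-diagonal part of \m{k_{PQ}} (smooth by pseudolocality, yes, but only on \m{M\times M}) is shown to have the required uniform behavior and smooth extension as \m{x\to\pa\oM}. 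Controlling this requires either uniform-up-to-the-boundary estimates on a resolved double (or triple) space, or the groupoid, on which the composition is an exact convolution of compactly supported conormal distributions and no remainder ever leaves the compactified object. The same issue, in milder form, affects your adjoint step (the reflection of the kernel re-centers the exponential at the other variable, and smoothness of that change of variables up to \m{\pa\oM} must be checked) and your reduction of \m{H\sp{s}\to H\sp{s-m}} boundedness to \m{L\sp{2}}, which invokes membership of complex powers of \m{\Delta_g} in (the closure of) the calculus--itself a nontrivial theorem for this class, not a ``standard Seeley-type fact'' one may assume here. The principal-symbol exact sequence and the \m{\exp(\maW)}-invariance are fine once composition is in place.
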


The group $\exp(\maW)$ is seen to act on \m{\Psi_{\maV}\sp{m}(M)}
since it acts by Lipschitz diffeomorphisms of $\oM$.  The proof of the
above theorem is too long to include here.  See \cite{aln2} for
details. Let us just say that it is obtained by realizing
\m{\Psi_{\maV}\sp{*}(M)} as the image of a groupoid pseudodifferential
operator algebra \cite{aln2, Monthubert, MonthubertPierrot, NWX} for
{\em any} Lie groupoid integrating the Lie algebroid \m{A} defining
the Lie manifold \m{(\oM, A)} \cite{Debord1, Debord2, NistorJapan}.

The algebra \m{\Psi_{\maV}\sp{*}(M)} has the property that its subset
of differential operators coincides with \m{\Diff(\maV)}. It also has
the good symbolic properties that answer to a question of Melrose
\cite{aln2, MelroseICM}.

\subsection{Comparison algebras}
\label{ssec.comparison}
We continue to denote by \m{(\oM, A)} an open manifold with a Lie
structure at infinity and by \m{\maV:=\varrho_*(\Gamma(A))}.  For
simplicity, we shall assume that \m{\oM} is connected.  We now recall
from \cite{MN3} the {\em comparison} \m{C\sp{*}}-algebra \m{\mfk{A}(U,
  \maV)} associated to an open subset $U \subset \oM$. Its definition
extends to open manifolds with a Lie structure at infinity by Lemma
\ref{lemma.bg} that justifies the following definition.

\begin{definition}\label{def.comparison}
Let us assume that \m{A} has a metric \m{g} such that the induced
metric on \m{M} is of bounded geometry and let \m{U \subset \oM} be an
open subset. Then \m{\mfk{A}(U; \maV)} is the norm closed subalgebra
of the algebra \m{\maB(L\sp{2}(M; \vol_g))} of bounded operators on
\m{L\sp{2}(M; \vol_g)} generated by all the operators of the form
\m{\phi_1 P (1 + \Delta_g)\sp{-k} \phi_2}, where \m{\phi_i \in
  \CIc(U)}, \m{P \in \Diff(\maV)} is a differential operator of order
\m{\le 2k}, and $\Delta_g$ is the Laplacian on $M$ (not on $U$!)
associated to the metric $g$.
\end{definition}

In case an anisotropic structure is given, the group \m{\exp(\maW)}
acts by automorphisms on the comparison algebra \m{\mfk{A}(\oM;
  \maV)}.  We shall need the following lemma that follows right away
from the results in \cite{aln1} and \cite{LNGeometric}.  By a
pseudodifferential operator on a manifold, we shall mean one that is
obtained by the usual quantization formula in any coordinate system.

\begin{lemma}\label{lemma.princSymb}
Let us use the notation and the assumptions of Definition
\ref{def.comparison} and let \m{T := \phi_1 P (1 + \Delta_g)\sp{-k}
  \phi_2}. Then \m{T} is contained in the norm closure of
\m{\Psi_{\maV}\sp{0}(M)} and is a pseudodifferential operator of order
\m{\le 0} with principal symbol
\begin{equation*}
  \sigma_0(T) = \phi_1 \sigma_{2k}(P) (1 + |\xi|\sp{2})\sp{-k} \phi_2
  \,.
\end{equation*}
Moreover, the principal symbol depends continuously on \m{T}, and
hence extends to a continuous, surjective morphism \m{\sigma_0 :
  \mfk{A}(U; \maV) \to \maC_0(S\sp{*}A\vert_{U})}.
\end{lemma}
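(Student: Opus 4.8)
The plan is to establish the two assertions in turn. First I would show that each generator $T = \phi_1 P (1+\Delta_g)^{-k}\phi_2$ of $\mfk{A}(U;\maV)$ lies in the norm closure of $\Psi_\maV^0(M)$ and compute its principal symbol; then I would pass to the whole algebra by continuity. The key input is the mapping-properties theorem for $\Psi_\maV^*(M)$ quoted just above: $\Delta_g = d^*d \in \Diff(\maV)$ (Theorem \ref{thm.geometric}) is a second-order element of $\Diff(\maV) \subset \Psi_\maV^2(M)$, elliptic because its principal symbol is $|\xi|^2$ on $A^*$; hence $(1+\Delta_g)$ is an invertible element of the calculus (on the appropriate Sobolev scale) with parametrix in $\Psi_\maV^{-2}(M)$, and by the standard argument for complex powers in a (pseudo)differential calculus closed under parametrices, $(1+\Delta_g)^{-k} \in \Psi_\maV^{-2k}(M)$, with principal symbol $(1+|\xi|^2)^{-k}$. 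Since $P \in \Diff(\maV)$ has order $\le 2k$ and $\phi_1,\phi_2 \in \CIc(U) \subset \CI(\oM)$ multiply into the calculus, the product $\phi_1 P (1+\Delta_g)^{-k}\phi_2$ lies in $\Psi_\maV^0(M)$ by the composition property $\Psi_\maV^m\Psi_\maV^{m'}\subset\Psi_\maV^{m+m'}$, and its principal symbol is computed by the multiplicativity of $\sigma_0$ to be exactly $\phi_1 \sigma_{2k}(P)(1+|\xi|^2)^{-k}\phi_2 \in \maC_0(S^*A|_U)$ (note $\sigma_{2k}(P)(1+|\xi|^2)^{-k}$ is homogeneous of degree $0$, so it descends to the cosphere bundle, and the compact supports of $\phi_i$ force it into $\maC_0$ of the restriction to $U$). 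Since $\Psi_\maV^0(M) \subset \maB(L^2(M))$ by the boundedness statement, this shows each generator is in the norm closure of $\Psi_\maV^0(M)$ and pins down its symbol.

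Next I would handle arbitrary elements of $\mfk{A}(U;\maV)$. The principal symbol map $\sigma_0$ on $\Psi_\maV^0(M)$ extends continuously to its norm closure $\overline{\Psi_\maV^0(M)}$: this is because $\sigma_0 : \Psi_\maV^0(M) \to \maC_0(S^*A)$ is contractive for the operator norm on the source (the norm of $\sigma_0(T)$ is bounded by the essential norm of $T$, hence by $\|T\|$), so it extends to a norm-decreasing $*$-homomorphism on the closure. Its image is a $C^*$-subalgebra of $\maC_0(S^*A)$ containing all $\phi_1\sigma_{2k}(P)(1+|\xi|^2)^{-k}\phi_2$; since $\Delta_g$ is elliptic and, for $U$ fixed, the functions $\phi_i$ and the symbols $\sigma_{2k}(P)$ for $P \in \Diff(\maV)$ of all even orders separate points and vanish nowhere needed on $S^*A|_U$, the image is exactly $\maC_0(S^*A|_U)$ — this gives surjectivity onto $\maC_0(S^*A|_U)$. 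Restricting the extended $\sigma_0$ to the subalgebra generated by the $T$'s, i.e. to $\mfk{A}(U;\maV) \subset \overline{\Psi_\maV^0(M)}$, yields the desired continuous surjective morphism $\sigma_0 : \mfk{A}(U;\maV) \to \maC_0(S^*A|_U)$.

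The step I expect to be the main obstacle is the clean identification of the \emph{image} of $\sigma_0$ as precisely $\maC_0(S^*A|_U)$ — that is, showing surjectivity rather than just that the image is some $C^*$-subalgebra. One must argue that the symbols $\phi_1\sigma_{2k}(P)(1+|\xi|^2)^{-k}\phi_2$, as $k$, $P\in\Diff(\maV)$ of order $\le 2k$, and $\phi_i \in \CIc(U)$ vary, generate a dense (hence, being a $C^*$-algebra, all of) $\maC_0(S^*A|_U)$: density of the $\phi_i$-part is the usual Stone--Weierstrass/partition-of-unity argument on $U$, while the fiberwise direction uses that homogeneous polynomial symbols of vector fields in a local basis of $\maV$, divided by powers of $(1+|\xi|^2)$, separate points of each cosphere fibre; one must make sure the metric $g$ being of bounded geometry and the local basis of $\maV$ (Remark \ref{rem.localBasis}) give enough such symbols, and that continuity of $\sigma_0$ in $T$ follows from the inequality $\|\sigma_0(T)\|_\infty \le \|T\|_{\maB(L^2)}$. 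The remaining points — closure of the calculus under the relevant compositions and adjoints, and $(1+\Delta_g)^{-k}\in\Psi_\maV^{-2k}(M)$ — are either quoted from the theorem above or are the standard elliptic-parametrix argument in a calculus closed under parametrices, so I would cite \cite{aln2, aln1, LNGeometric} rather than reproduce them.
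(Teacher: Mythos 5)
The paper itself offers no proof of this lemma --- it is asserted to follow ``right away'' from \cite{aln1} and \cite{LNGeometric} and is imported from \cite{MN3} --- so your plan is necessarily a reconstruction, and most of it is the standard one: \m{\Delta_g \in \Diff(\maV)} is second-order elliptic with principal symbol \m{|\xi|^2} on \m{A^*}, the generators land in the norm closure of \m{\Psi_{\maV}^0(M)}, the symbol is computed by multiplicativity, and \m{\sigma_0} is contractive and so extends to the \m{C^*}-closure. One small overstatement: \m{(1+\Delta_g)^{-k}} need not lie in \m{\Psi_{\maV}^{-2k}(M)} itself, since a parametrix inverts \m{1+\Delta_g} only modulo \m{\Psi_{\maV}^{-\infty}(M)} and the exact inverse differs from it by an element of the norm-closed ideal \m{\overline{\Psi_{\maV}^{-\infty}(M)}}; this costs nothing here, because the lemma only asserts membership in the norm closure and the correction has vanishing order-zero symbol.

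The genuine gap is exactly at the step you flagged, and the fix you sketch does not close it. With Definition \ref{def.comparison} as written, \m{P} has order \m{\le 2k} and is weighted by \m{(1+\Delta_g)^{-k}} with \m{k} an integer, so the only generators with non-vanishing order-zero symbol are those with \m{\ord(P)=2k}, and then \m{\sigma_{2k}(P)(x,\xi)} is a homogeneous polynomial of \emph{even} degree in \m{\xi}, hence an even function of \m{\xi}. The first-order \m{P} (vector fields) that you invoke force \m{k\ge 1} and give operators of order \m{\le -1}, whose order-zero symbol is zero. Sums, products, adjoints and norm limits of operators with even symbols again have even symbols, so the image of \m{\sigma_0} on \m{\mfk{A}(U;\maV)} is contained in the closed subalgebra of functions invariant under \m{\xi \mapsto -\xi}, i.e.\ in \m{\maC_0} of the fiberwise projectivized bundle. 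Even homogeneous polynomials divided by \m{|\xi|^{2k}} never separate the antipodal points \m{\pm\xi} of a cosphere fibre, so Stone--Weierstrass yields density only in that even part, not in \m{\maC_0(S^*A\vert_U)}. To get genuine surjectivity one must use the generators \m{\phi_1 P (1+\Delta_g)^{-k/2}\phi_2} with \m{\ord(P)\le k} and \m{k} allowed to be odd (the form used in Cordes' comparison algebras and in \cite{MN3}); then \m{\sigma_k(P)|\xi|^{-k}} for \m{k} odd supplies the odd functions and your Stone--Weierstrass argument does go through. You should either adopt that definition or record that, with the definition as literally stated, the image of \m{\sigma_0} is only the even part of \m{\maC_0(S^*A\vert_U)}.
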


As in \cite{MN3}, we obtain the following result.

\begin{theorem} \label{theorem.3}
Let $(\oM, A)$ be a connected open manifold with a Lie structure at
infinity with a compatible metric of bounded geometry.  Then \m{\fa(M;
  \maV)} contains the algebra \m{\maK(L\sp{2}(M))} of all compact
operators on \m{L\sp{2}(M)} and is contained in the norm closure of
\m{\Psi_{\maV}\sp{0}(M)}.
\end{theorem}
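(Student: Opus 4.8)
The proof proceeds by establishing the two inclusions separately.

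\smallskip

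\noindent\emph{The inclusion $\fa(M;\maV) \subseteq \overline{\Psi_\maV\sp{0}(M)}$ (norm closure).} This is essentially immediate from Lemma \ref{lemma.princSymb}: every generator $\phi_1 P (1+\Delta_g)\sp{-k}\phi_2$ of $\fa(M;\maV)$ lies in $\overline{\Psi_\maV\sp{0}(M)}$. By the composition theorem for $\Psi_\maV\sp{\bullet}(M)$ recalled above, $\Psi_\maV\sp{0}(M)\Psi_\maV\sp{0}(M)\subseteq \Psi_\maV\sp{0}(M)$ and every element of $\Psi_\maV\sp{0}(M)$ is bounded on $L\sp2(M)$, so $\overline{\Psi_\maV\sp{0}(M)}$ is a norm-closed subalgebra of $\maB(L\sp2(M))$; since $\fa(M;\maV)$ is by definition the smallest norm-closed subalgebra containing the above generators, the inclusion follows.

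\smallskip

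\noindent\emph{The inclusion $\maK(L\sp2(M))\subseteq \fa(M;\maV)$.} Here I would follow the classical route for irreducibly acting operator algebras: prove (a) $\fa(M;\maV)$ contains a nonzero compact operator, and (b) $\fa(M;\maV)$ acts irreducibly on $L\sp2(M)$; then the standard fact that an irreducibly acting $C\sp{*}$-subalgebra of $\maB(H)$ meeting $\maK(H)$ nontrivially must contain all of $\maK(H)$ finishes the argument. For (a): take $P=1$ and $k\ge 1$, so that $T:=\phi_1(1+\Delta_g)\sp{-k}\phi_2\in\fa(M;\maV)$ for any $\phi_1,\phi_2\in\CIc(M)$. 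Since the metric is of bounded geometry, $M$ is complete, so $(1+\Delta_g)\sp{-k}$ maps $L\sp2(M)$ boundedly into $H\sp{2k}(M)=\maD(\Delta_g\sp{k})$, and then $\phi_1$ carries this into the subspace of $H\sp{2k}(M)$ consisting of functions supported in the compact set $\supp(\phi_1)\subset M$. A local Rellich argument (cover $\supp(\phi_1)$ by finitely many coordinate charts) shows that this subspace embeds compactly into $L\sp2(M)$, so $T$ is compact; it is nonzero for suitable choices of $\phi_i$ (e.g.\ nonnegative with overlapping supports, using positivity of the resolvent kernel near the diagonal).

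\smallskip

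\noindent For (b): since $\Delta_g=d\sp{*}d\in\Diff(\maV)$ by Theorem \ref{thm.geometric}, also $(1+\Delta_g)\sp{k}\in\Diff(\maV)$, an operator of order $2k$. Given $\psi\in\CIc(M)$, choose $\chi\in\CIc(M)$ with $\chi\equiv 1$ on a neighborhood of $\supp(\psi)$; the generator with $P=(1+\Delta_g)\sp{k}$, $\phi_1=\psi$, $\phi_2=\chi$ equals $\psi(1+\Delta_g)\sp{k}(1+\Delta_g)\sp{-k}\chi = M_\psi$. Hence $\fa(M;\maV)$ contains the multiplication operator $M_\psi$ for every $\psi\in\CIc(M)$, so any operator commuting with $\fa(M;\maV)$ lies in the maximal abelian algebra $L\sp{\infty}(M)$, i.e.\ equals $M_f$ for some $f\in L\sp{\infty}(M)$. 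Applying this to the generators $\phi_1(1+\Delta_g)\sp{-k}\phi_2$, and using that $M_f$ already commutes with $M_{\phi_1}$ and $M_{\phi_2}$, we get $\phi_1\,[M_f,(1+\Delta_g)\sp{-k}]\,\phi_2 = 0$ for all $\phi_i\in\CIc(M)$, hence $[M_f,(1+\Delta_g)\sp{-k}]=0$, hence $M_f$ commutes with $\Delta_g$; by elliptic regularity $f$ is a.e.\ locally constant, and since $\oM$ (hence its dense interior $M$) is connected, $f$ is constant. Thus the commutant of $\fa(M;\maV)$ is $\CC\,I$, so the action is irreducible, and (a)+(b) give $\maK(L\sp2(M))\subseteq\fa(M;\maV)$.

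\smallskip

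The main obstacle is step (b): everything hinges on extracting enough multiplication operators from $\fa(M;\maV)$ to identify its commutant, and then on the rigidity statement that $M_f$ commuting with $\Delta_g$ forces $f$ constant, which is exactly where the connectedness (and the ellipticity of $\Delta_g$) enters. The compactness in (a) and the algebra inclusion in the first part are routine consequences of the earlier results (Lemma \ref{lemma.princSymb}, Theorem \ref{thm.geometric}, and the structure theorem for $\Psi_\maV\sp{\bullet}(M)$).
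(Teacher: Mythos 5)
Your proposal is correct and follows the same overall strategy as the paper's proof (exhibit a nonzero compact operator in $\fa(M;\maV)$, prove irreducibility, and invoke the standard fact that an irreducibly acting $C^{*}$-algebra meeting $\maK$ nontrivially contains all of $\maK$; the first inclusion via Lemma \ref{lemma.princSymb} is identical). The difference lies in how irreducibility is established. The paper argues directly that there is no nontrivial invariant subspace: for any nonzero $\xi_1,\xi_2\in L^{2}(M)$ one can choose $\phi_1,\phi_2$ and $P$ of order $\le 2k-1$ so that $(T\xi_1,\xi_2)\neq 0$ for $T=\phi_1P(1+\Delta)^{-k}\phi_2$ --- a short but somewhat implicit step. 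You instead compute the commutant, using the nice observation that taking $P=(1+\Delta_g)^{k}$ puts every multiplication operator $M_\psi$, $\psi\in\CIc(M)$, inside $\fa(M;\maV)$, which pins the commutant inside $L^{\infty}(M)$ and then reduces matters to the rigidity statement that $[M_f,\Delta_g]=0$ forces $f$ to be constant on the connected manifold $M$. Your route is longer but makes explicit where connectedness enters (it is also implicitly needed in the paper's pairing argument) and it isolates the two analytic inputs cleanly. Two small points you should spell out: the passage from $[M_f,(1+\Delta_g)^{-k}]=0$ to $[M_f,(1+\Delta_g)^{-1}]=0$ is best done by taking the $k$-th root of the positive operator $(1+\Delta_g)^{-k}$ via continuous functional calculus; and the step ``$\Delta_g(fu)=f\Delta_g u$ for all test functions $u$ implies $\nabla f=0$ distributionally'' is what you mean by ``elliptic regularity'' and deserves a sentence. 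Neither is a gap, just a place to be explicit.
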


\begin{proof} We recall the proof for the benefit of
the reader. The inclusion of \m{\fa(\oM; \maV)} in the norm closure of
\m{\Psi_{\maV}\sp{0}(M)} follows from Lemma \ref{lemma.princSymb}.

Let \m{\phi_1, \phi_2 \in \CIc(M)} and \m{P \in \Diff(\maV)} be a
differential operator of order at most $2k-1$, then the composition
operator \m{\phi_1 P (1 + \Delta)\sp{-k} \phi_2} is a non-zero compact
operator and belongs to \m{\fa(\oM; \maV)}, by the definition.  The
role of the cut-off functions is to decrease the support of the
distribution kernel of $P (1 + \Delta)\sp{-k}$.  Since $\phi_1 P (1 +
\Delta) \sp{-k} \phi_2$ is compact, we thus obtain that \m{\fa(\oM;
  \maV)} contains non-zero compact operators. Let \m{\xi_1, \xi_2 \in
  L\sp{2}(M)} be nonzero.  Then we can find \m{\phi_1, \phi_2}, and
\m{P} as above such that $T := \phi_1 P (1 + \Delta)\sp{-k} \phi_2$
satisfies \m{(T \xi_1, \xi_2) \neq 0}. Hence \m{\fa(\oM; \maV)} has no
non-trivial invariant subspace.  Hence \m{\fa(\oM; \maV)} contains all
compact operators because any proper subalgebra of the algebra of
compact operators has an invariant subspace.
\end{proof}

\subsection{Fredholm conditions}
Theorem \ref{theorem.3} allows us, in principle, to study the Fredholm
property of operators in \m{\fa(\oM; \maV)}.  Let us denote by \m{\maK
  = \maK(L\sp{2}(M)} the ideal of compact operators in \m{\fa(\oM;
  \maV)}. Recall then Atkinson\rp s classical result \cite{Douglas}
that states that \m{T \in \fa(\oM; \maV)} is Fredholm if, and only if,
its image \m{T + \maK} in \m{\fa(\oM; \maV)/\maK} is invertible.

Usually it is difficult to check directly that \m{T + \maK} is
invertible in \m{\fa(\oM; \maV)/\maK}, and, instead, one checks the
invertibility of operators of the form \m{\pi(T)}, where \m{\pi}
ranges through a suitable family of representations of \m{\fa(\oM;
  \maV)/\maK}. Exactly what are the needed properties of the family of
representations of \m{\fa(\oM; \maV)/\maK} was studied in
\cite{NistorPrudhon, Roch}. Let us recall the main conclusions of that
paper. Let us consider a family of representations \m{\maF} and \m{\pi
  \in \maF}. It is not enough for the family \m{\maF} to be faithful
so that the invertibility of all $\pi(T)$ implies the Fredholmness of
$T$. For this implication to be true, the necessary condition is that
the family \m{\maF} be {\em invertibility sufficient}
\cite{NistorPrudhon, Roch}.  An equivalent condition (in the separable
case) is that the family $\maF$ be {\em exhausting}, in the sense that
every irreducible representation of \m{\fa(\oM; \maV)/\maK} is weakly
contained in one of the representations \m{\pi \in \maF}.

This approach was used (more or less explicitly) in
\cite{DamakGeorgescu, DLR, GeorgescuIftimovici, GeorgescuNistor2,
  GeorgescuNistor1, LMN1, LNGeometric, MantoiuReine, bm-these, Rab3,
  Roch, RochBookNGT, SchroheFrechet}, and in many other
papers. However, in order for this approach to be effective, we need
to have a good understanding of the representation theory of the
quotient \m{\fa(\oM; \maV)/\maK}. This seems to be difficult in
general, at least without using groupoids. Thus we shall replace the
comparison algebra \m{\fa(\oM; \maV)} with the norm closure of the
algebra \m{\Psi_{\maV}\sp{0}(M)}. The algebra \m{\Psi_{\maV}\sp{0}(M)}
is defined in the next section.

There are many general results that yield Fredholm conditions for
operators.  We formulate now one such result that is sufficient in
most applications.  We shall make some the following {\bf assumptions
  on the Lie manifold} \m{(\oM, A)}
\begin{enumerate}[(a)]
\item We assume that there exists a filtration 
\begin{equation}\label{eq.filtration}  
  \emptyset =: U_{-1}
  \subset U_0 \subset \ldots \subset U_k\subset U_{k+1}
  \subset \ldots \subset U_N := \oM
\end{equation}
  of \m{\oM} with open sets such that each $S_k := U_{k}\smallsetminus
  U_{k-1}$ is a manifold (possibly with corners) and that there exists
  submersions \m{p_k : S_k \to B_k} of manifolds (possibly with
  corners) whose fibers are orbits of \m{\exp(\maV)}.
 
\item We assume that, for each $k = 0, \ldots, N$, there exists a Lie
  algebroid \m{A_k \to B_k} with zero anchor map such that
  \m{A\vert_{S_k} \simeq p_k\pullback(A_k)}, the pull-back of \m{A_k}
  by the submersion \m{p_k} \cite{HigginsMackenzie2}. In particular,
  we have the isomorphism of vector bundles
\begin{equation*}
  A\vert_{S_k} \, \simeq \, \ker(p_k)_* \oplus p_k\sp*(A_k) \,.
\end{equation*} 
  
\item Let us denote by \m{(Z_\alpha)_{\alpha \in J}} the family of
  {\em orbits} \m{Z_\alpha = \exp(\maV)p} of \m{\maV} and by
  \m{G_\alpha} the simply-connected Lie group that integrates the Lie
  algebra \m{(A_k)_{p_k(p)} \simeq \ker(\varrho_p)}, for any \m{p \in
    Z_\alpha \subset S_k}. Also, let us denote by \m{\maG} the
  disjoint union
\begin{equation}\label{eq.def.maG}
 \maG \ede  \,  \cup_{\alpha \in J} Z_\alpha
    \times Z_{\alpha} \times G_{\alpha} 
\end{equation} 
   with the induced groupoid structure. We assume that the groupoid
   exponential map makes \m{\maG} a Hausdorff Lie groupoid
   \cite{NistorJapan}. In particular, \m{\maG} is a manifold (possibly
   with corners).
\end{enumerate} 

Under the above assumptions, the results in \cite{KSkandalis, LMN1,
  LNGeometric, NistorPrudhon, renaultBook, renault91} give the
following theorem.

\begin{theorem} \label{thm.fc.Lie}
Let $I$ be an index parametrizing the set of obits of $\maV$ on $\pa
\oM$. (So $J = I \cup \{0\}$.)  We can associate to each \m{P \in
  \Diff(\maV; E, F)} a family $(P_\alpha)$, of \m{G_\alpha}-invariant
operators \m{P_\alpha} on \m{Z_\alpha \times G_\alpha}. If all the
groups $G_{\alpha}$, $\alpha \in I$ are amenable, then the following
Fredholm condition holds.
\begin{multline*}
  P : H\sp{s}(M) \to H\sp{s-m}(M) \mbox{ is Fredholm
  }\ \ \Leftrightarrow \ \ \ \m{P} \mbox{ is elliptic} \\ \mbox{ and
    all } \ P_\alpha : H\sp{s}(Z_\alpha \times G_\alpha ) \to
  H\sp{s-m}(Z_\alpha \times G_\alpha ), \, \alpha \in I,\ \mbox{ are
    invertible}\,.
\end{multline*}
\end{theorem}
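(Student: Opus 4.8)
The plan is to reduce the Fredholmness of $P$ to an invertibility-modulo-compacts statement in the norm closure $\maA := \overline{\Psi_{\maV}^{0}(M)}$ (not to be confused with the Lie algebroid $A$), and then to analyse the quotient $\maA/\maK$ by means of the Lie groupoid $\maG$ of assumption (c) together with the abstract invertibility criterion of \cite{NistorPrudhon, Roch}. First I would carry out the Sobolev reduction: since $P \in \Diff(\maV; E, F)$ has order $m$ and the $A$-Laplacians $\Delta_E$, $\Delta_F$ lie in $\Diff(\maV)$ by Theorem \ref{thm.geometric}, the operator $Q := (1+\Delta_F)^{(s-m)/2} P (1+\Delta_E)^{-s/2}$ differs from an element of $\Psi_{\maV}^{0}(M)$ by a smoothing term (using that the complex powers $(1+\Delta)^{t/2}$ belong to the calculus $\Psi_{\maV}^{t}(M)$), and $P : H^{s}(M;E) \to H^{s-m}(M;F)$ is Fredholm if and only if $Q : L^{2}(M;E) \to L^{2}(M;F)$ is. By Theorem \ref{theorem.3}, $\maK = \maK(L^{2}(M))$ is an ideal of $\maA$, so Atkinson's theorem reduces the problem to the invertibility of the class $Q + \maK$ in $\maA/\maK$.

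Next I would introduce the groupoid model. By the realisation of $\Psi_{\maV}^{*}(M)$ as the image of a groupoid pseudodifferential calculus \cite{aln2}, $\Psi_{\maV}^{*}(M)$ is the image, under the vector (regular) representation $\varrho_{0}$ on $L^{2}(M)$, of the calculus $\Psi^{*}(\maG)$ of the Hausdorff Lie groupoid $\maG$ from assumption (c), which integrates $A$. The interior $M$ is the orbit indexed by $0$, with trivial isotropy (the anchor being an isomorphism there), and $\maG$ restricts over $M$ to the pair groupoid $M \times M$; consequently $\ker\varrho_{0}$ is the ideal of $\overline{\Psi^{0}(\maG)}$ attached to the saturated closed set $\pa\oM$, and $\varrho_{0}$ identifies $\overline{\Psi^{0}(\maG)}/\ker\varrho_{0}$ with $\maA$, carrying the closure of $\Psi^{-1}(\maG)$ onto $\maK$. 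Amenability of the isotropy groups $G_{\alpha}$ enters here so that full and reduced completions agree on the relevant bundle-of-groups pieces and $\varrho_{0}$ is indeed faithful modulo $\ker\varrho_{0}$.

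The core of the argument is to show that, for the quotient $\maA/\maK$, the family $\maF := \{\sigma_{0}\} \cup \{\pi_{\alpha} : \alpha \in I\}$ is \emph{exhausting} (equivalently, invertibility sufficient) in the sense of \cite{NistorPrudhon, Roch}. Here $\sigma_{0}$ is the principal symbol, with values in $\maC_{0}(S^{*}A)$, and $\pi_{\alpha}$ is the regular representation attached to the orbit $Z_{\alpha} \subset \pa\oM$: assumption (b) guarantees that the isotropy Lie algebra $\ker(\varrho_{q}) \cong \mfk{g}_{\alpha}$ is constant along $Z_{\alpha}$, so that $\maG|_{Z_{\alpha}} \cong Z_{\alpha}\times Z_{\alpha}\times G_{\alpha}$ and $\pi_{\alpha}$ acts on $L^{2}(Z_{\alpha}\times G_{\alpha})$, sending $P$ to the operator $P_{\alpha}$ of the statement. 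Decomposing $\pa\oM$ along the filtration of assumption (a) into a composition series of ideals, each subquotient being an algebra over one of the submersions $p_{k} : S_{k} \to B_{k}$ with fibres the $Z_{\alpha}$, one checks that every irreducible representation of $\maA/\maK$ is weakly contained in $\sigma_{0}$ or in some $\pi_{\alpha}$; amenability of $G_{\alpha}$ is invoked once more to ensure that the single regular representation $\pi_{\alpha}$ already exhausts the irreducibles of each bundle-of-groups stratum. This is the type of statement proved in \cite{KSkandalis, LMN1, LNGeometric, NistorPrudhon, renaultBook, renault91}, and it is the step I expect to be the main obstacle, since this is where assumptions (a)--(c) and the amenability hypothesis are all genuinely used.

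Finally, the abstract criterion of \cite{NistorPrudhon, Roch} gives that $Q + \maK$ is invertible in $\maA/\maK$ if and only if $\sigma_{0}(Q)$ is invertible and $\pi_{\alpha}(Q)$ is invertible for every $\alpha \in I$. The first condition is the ellipticity of $P$; for the second, I would undo the Sobolev reduction on each $Z_{\alpha}\times G_{\alpha}$, using that the $G_{\alpha}$-invariant Laplacians intertwine with $\pi_{\alpha}$, so that $\pi_{\alpha}(Q)$ agrees modulo lower order with $(1+\Delta)^{(s-m)/2} P_{\alpha} (1+\Delta)^{-s/2}$, whence $\pi_{\alpha}(Q)$ invertible on $L^{2}$ is equivalent to $P_{\alpha} : H^{s}(Z_{\alpha}\times G_{\alpha}) \to H^{s-m}(Z_{\alpha}\times G_{\alpha})$ invertible. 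Combining this with the first two steps yields the asserted equivalence.
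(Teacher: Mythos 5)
Your proposal is correct and follows essentially the same route as the paper's own (sketched) proof: conjugation by powers of $1+\Delta$ to reduce to an order-zero element of the closure of the groupoid calculus, injectivity of the vector representation via Hausdorffness and \cite{KSkandalis}, amenability to identify full and reduced $C^*$-algebras stratum by stratum, and the exhausting/invertibility-sufficient family of \cite{NistorPrudhon, Roch} consisting of the principal symbol together with the regular representations $\pi_\alpha$ at the boundary orbits. If anything, you are slightly more explicit than the paper about where ellipticity enters (as invertibility of $\sigma_0(Q)$ in the exhausting family for $\maA/\maK$) and about undoing the Sobolev reduction on each $Z_\alpha\times G_\alpha$, but this is a presentational refinement rather than a different argument.
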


\begin{proof}(Sketch)
The exact sequence of (full) $C\sp{\ast}$-algebras associated to an
open subset of the set of units of a groupoid \cite{renaultBook} tells
us that $\Prim(C\sp{\ast}(\maG))$ is the disjoint union of the sets
$\Prim(C\sp{\ast}(\maG_{S_k}))$. We have that $C\sp{\ast}(\maG)\simeq
C_r\sp{\ast}(\maG)$, because $C\sp{\ast}(\maG_{S_k})\simeq
C_r\sp{\ast}(\maG_{S_k})$ for each $k$, by the amenability of the
groups $G_\alpha$.  This shows that the set $\{\Ind(\lambda_\alpha)\}$
of representations of $C\sp{\ast}(\maG)$ induced from the regular
representations $\lambda_\alpha$ of $G_\alpha$ is an exhausting set of
representations of $C\sp{\ast}(\maG)\simeq C_r\sp{\ast}(\maG)$. Each
$\pi_{\alpha} := \Ind(\lambda_\alpha)$ is the regular representation
of $C_r\sp{\ast}(\maG)$ associated to (any unit in) the orbit
$Z_{\alpha}$, with $\pi_0$ being the vector representation on
$L\sp{2}(M)$.
 
The assumptions imply that $S_0 = U_0 = M$ and that $G_0 = \{e\}$.
Hence $\maG_{S_0}$ is the pair groupoid $S_0 \times S_0$ and
$C\sp{\ast}(\maG_{M}) \simeq \maK$, the algebra of compact operators
on $L\sp{2}(M)$. We obtain that $C\sp{\ast}(\maG)/\maK \simeq
C\sp{\ast}(\maG_{\pa \oM}) \simeq C_r\sp{\ast}(\maG_{\pa \oM})$ and
the regular representations of $C\sp{\ast}(\maG_{\pa \oM})$ form an
invertibility sufficient set of representations of
$C\sp{\ast}(\maG_{\pa \oM})$.  This gives that $a \in
C\sp{\ast}(\maG)\simeq C_r\sp{\ast}(\maG)$ is Fredholm if, and only
if, $\pi_\alpha(a)$ is invertible for all $\alpha$ corresponding to
orbits in $\pa \oM$.
 
We shall apply these observations to the algebra
$\Psi\sp{\infty}(\maG)$ of pseudodifferential operators on groupoids,
which is recalled in the next subsection.  The fact that $\maG$ is
Hausdorff implies that the vector representation of $C\sp{\ast}(\maG)$
(associated to the orbit $M \subset \oM$) is injective, by
\cite{KSkandalis}. We shall use then the vector representation to
identify $\Psi\sp{\infty}(\maG)$ and $C_r\sp{\ast}(\maG)$ with their
images under the vector representation. In, particular, $P$ is given
by a family of operators $(P_x)$, $x \in M$, with operators
corresponding to units in the same orbit unitarily equivalent. We then
let $P_\alpha := P_x$, for some $x$ in the orbit corresponding to
$\alpha$.
 
Let $a := (1 + \Delta)\sp{(s-m)/2} P (1 + \Delta)\sp{-s/2} \in
\overline{\Psi\sp{0}}(\maG)$ \cite{LMN1, LNGeometric}.  We then have
that $P$ is Fredholm if, and only if, $a$ is Fredholm on $L\sp{2}(M)$,
which, in turn, is true, if, and only if, $\pi_\alpha(a)$ is
invertible for all $Z_\alpha \subset \pa\oM$. Since
\begin{equation*}
 \pi_x(a) \ede (1 + \pi_x(\Delta))\sp{(s-m)/2} P_x (1 + \pi_x(\Delta))\sp{-s/2},
\end{equation*}
acting on $Z_\alpha \times G_\alpha$, the result follows from the fact
that the set of arrows of $\maG$ with domain $x \in Z_\alpha$ is
$Z_\alpha \times G_\alpha$.
\end{proof}

This theorem is closely related to the representations of Lie
groupoids, see \cite{buneciSurvey, Orloff07, OrloffHuef12,
  EchterhoffMemoirs, Echterhoff96, IonescuWilliamsEHC, Renault87,
  ErpWilliams}. For our result, we need Hausdorff groupoids, see
however also \cite{KSkandalis, TuNonH} for some results on non-Hausdorff
groupoids. More general Fredholm conditions can be
obtained along the same lines,
but the result mentioned here, although having a rather long list of
assumptions, is easy to prove and to use. More references to earlier
results will be given in the next section when discussing examples.

\begin{remarks}
We continue with a few remarks. 

\begin{enumerate}
\item If \m{\oM} is compact and smooth (so without corners), then \m{I
  = \emptyset}, and we recover Theorem \ref{thm.fc.as}. As we will
  explain below, we also recover Theorem \ref{thm.fc.aps2}.  Each
  operator \m{P_\alpha} is ``of the same kind'' as \m{P} (Laplace,
  Dirac, ... ) and can be recovered by ``freezing the coefficients''
  at the orbit \m{Z_\alpha}. The theorem allows us to reduce some
  questions on \m{M} to questions on \m{P_\alpha} and
  \m{G_\alpha}. Because of the \m{G_\alpha}-invariance of our
  operators, we can use results on harmonic analysis on \m{G_\alpha}
  to obtain an inductive procedure to study geometric operators on
  \m{M} \cite{Mehdi, Pasquale}.

\item Each open face $F_0$ of \m{\oM} is invariant for \m{\exp(\maV)},
  and hence, if an orbit \m{Z_\alpha} intersects \m{F_0}, then it is
  completely contained in \m{F_0}. In particular, the set of orbits
  \m{I} identifies with the disjoint union of the sets \m{B_k} for
  \m{k=1, 2, \ldots, N}.
  
\item The sections of \m{\ker(p_k)_*} act by derivation on the
  sections of \m{p_k\sp*(A_,)}. Also, it follows that for any \m{p \in
    F_0}, the isotropy Lie algebra \m{\ker(\varrho_p)} is canonically
  isomorphic to the Lie algebra \m{(A_k)_{p_k(p)}}, see Definition
  \ref{def.isotropy}.

\item We note that our assumptions on \m{(\oM, A)} imply that the
  groupoid \m{\maG} considered in our assumptions must coincide with
  the one introduced by Claire Debord \cite{Debord1, Debord2}.
\end{enumerate}
\end{remarks}


\subsection{Pseudodifferential operators on groupoids}\label{ssec.psdo}
Let us briefly recall, for the benefit of the reader, the definition
of pseudodifferential operators on a Lie groupoid \m{\maG}.  Let $d :
\maG \to \oM$ be the domain map and $\maG_x = d^{-1}(x)$. Then
$\Psi^{m}(\maG)$, $m \in \RR$, consists of smooth families \m{(P_x)_{x
    \in \oM}} of classical, order $m$ pseudodifferential operators
$(P_x \in \Psi^m(\maG_x))$ that are right invariant with respect to
multiplication by elements of $\maG$ and are ``uniformly supported.''
To define what uniformly supported means, let us observe that the
right invariance of the operators $P_x$ implies that their
distribution kernels $K_{P_x}$ descend to a distribution $k_P \in
I^m(\maG, \oM)$ \cite{bm-these, NWX}. The family $P = (P_x)$ is called
{\em uniformly supported} if, by definition, $k_P$ has compact support
in $\maG$.

Groupoids simplify the study of pseudodifferential operators on
singular and non-compact spaces. For instance, one obtains a
straightforward definition of the \dlp generalized indicial
operators\drp\ as restrictions to invariant subsets \cite{LMN1}. More
precisely, let \m{N \subset \oM} be an invariant subset for \m{\maG},
that is, \m{d\sp{-1}(N) = r\sp{-1}(N)}, and let \m{\maG_N :=
  d\sp{-1}(N)}. Let us now assume that \m{P \in \Psi\sp{m}(\maG)} is
given by the family \m{(P_x)_{x \in \oM}}, then the \m{N}--indicial
family \m{\maI_N(P) := (P_x)_{x \in N}} is defined simply as the
restriction of \m{P} to \m{N} and is in \m{\Psi\sp{m}(\maG_N)}.  See
\cite{DebordSkandalis} for an extension of these results in relation
to the adiabatic groupoid. See also \cite{Aastrup, Karsten, KarstenCR,
  MeloNestSchrohe} for results on the Boutet-de-Montvel calculus in
the framework of groupoids.

In order to be able to use the machinery of Lie groupoids in analysis,
one has to sometimes first {\em integrate} the Lie algebroid that
naturally arises in the analysis problem at hand. That is, given a Lie
algebroid $A \to \oM$, one wants to find a Lie groupoid $\maG$ such
that $A(\maG) \simeq A$. Such a groupoid $A$ does not always exist,
and when it exists, it is not unique. Moreover, the choice of the
groupoid $\maG$ depends on the analysis problem one is interested to
solve.  For example, the Lie algebroid $TM \to M$ for a smooth,
compact manifold $M$ has the pair groupoid $M \times M$ as a {\em
  minimal} integrating groupoid and has $\maP(M)$, the path groupoid
of $M$ as the {\em maximal} integrating groupoid \cite{Debord1,
  Debord2, NistorJapan}. The first groupoid leads to the usual
analysis on compact, smooth manifolds (the AS-framework), whereas the
second one leads to the analysis of invariant operators on $\widetilde
M$, the universal covering space of $M$ (with group of deck
transformation $\pi_1(M)$). Both these examples are examples of
$d$-connected integrating groupoids (i.e. the fibers of the domain map
$d$ are connected). There are analysis problems, however, when one is
lead to non-$d$-connected groupoids \cite{CarvalhoYu}.

There are many works dealing with pseudodifferential operators on
groupoids, on singular spaces, or with the related $C\sp{*}$-algebras,
see for example \cite{Alldridge, Beltitza1, LindnerMem, Coriasco, DLR,
  Grieser, LeschPflaum, Mantoiu3, Mantoiu1, NazaSavinSternin, Perez,
  RochBookLimit, Roch, RochBookNGT, BKSo2, BKSo1, Vassout}.

\section{Examples and applications\label{sec5}}

We now discuss some applications. They are included just to give an
idea of the many possible applications of Lie manifolds, so we will be
short, but we refer to the existing literature for more details.  We
begin with some examples.

\subsection{Examples of Lie manifolds and Fredholm conditions}
We now include examples of Lie manifolds and show how to use Theorem
\ref{thm.fc.Lie}.  The following examples cover many of the examples
appearing in practice.

\begin{example}\label{ex.one.bis}
We now review our first, basic example, Example \ref{ex.one}, in view
of the new results. Recall that \m{\maV= \maV_b :=} the space of
vector fields on \m{\oM} that are tangent to \m{\pa \oM}.  Near the
boundary, a local basis is given by Equation
\eqref{eq.local.basis.one} of Example \ref{ex.one}, and hence
\m{\Diff(\maV_b)} is the algebra of totally characteristic
differential operators.  If \m{\oM} has a smooth boundary and we
denote by \m{r} the distance to the boundary (in some everywhere
smooth metric--including the boundary), then a typical compatible
metric on \m{M} is given near the boundary by \m{(r\sp{-1} dr)\sp{2} +
  h}, where \m{h} is a metric smooth up to the boundary. Hence the
geometry is that of a manifold with cylindrical ends.

We have that the orbits \m{Z_\alpha} are the open faces of \m{\oM},
except \m{M} itself. The groups are \m{G_\alpha \simeq \RR\sp{k}},
where \m{k} is the codimension of the corresponding face (so all are
commutative Lie groups).  In the case of a smooth boundary, the
\m{Z_\alpha}\rp s are the connected components of the boundary,
\m{G_\alpha = \RR}, and \m{P_\alpha} is the restriction of \m{\maI(P)}
to a translation invariant operator on \m{Z_{\alpha} \times \RR}.  See
also \cite{Lesch, MelrosePiazza, MelroseMendoza, Monthubert,
  MonthubertPierrot, SchSch1} for just a sample of the many papers on
this particular class of manifolds.
\end{example}

In the following examples, \m{\oM} will be a compact manifold with
smooth boundary \m{\pa \oM}. The following example is that of an
asymptotically hyperbolic space and has the feature that it leads to
non-commutative groups \m{G_\alpha}.

\begin{example}\label{ex.two}
Let \m{\oM} be a compact manifold with smooth boundary \m{\pa \oM} and
defining function \m{r}. We proceed as in Example \eqref{ex.one}. The
structural Lie algebra of vector fields is \m{\maV = r \Gamma(T\oM)=}
the space of vector fields on \m{\oM} that vanish on the
boundary. Using the same notation as in the Example \eqref{ex.one},
near a point of the boundary \m{\pa \oM = \{ r = 0\}}, a local basis
is given by
\begin{equation} 
  r \pa_r,\, r \pa_{y_2},\, \ldots ,\, r \pa_{y_n} \,,
\end{equation}
so \m{\maV} is a finitely generated, projective \m{\CI(\oM)}--module.
Since \m{\maV} is also closed under the Lie bracket and \m{\Gamma_c(M;
  TM) \subset \maV \subset \maV_b}, we have that \m{(\oM, \maV)}
defines indeed a Lie manifold.

The orbits \m{Z_\alpha \subset \pa \oM} are reduced to points, so we
can take \m{I := \pa \oM}, and \m{G_\alpha = T_\alpha \pa \oM \rtimes
  \RR} is the semi-direct product with \m{\RR} acting by dilations on
the vector space \m{T_\alpha \pa \oM}, $\alpha \in I$.  The
pseudodifferential calculus \m{\Psi_{\maV}\sp{*}(M)}for this example
was defined also by \cite{Lauter}, Lauter-Moroianu
\cite{LauterMoroianu1}, Mazzeo \cite{Mazzeo91}, and Schulze
\cite{SchulzeBook91}. The metric is {\em asymptotically hyperbolic}.
See also \cite{AlbinMelrose1, Gerard, Moroianu2010}.
\end{example}

The following example covers, in particular, \m{\RR\sp{n}} with the
usual Euclidean metric and with the radial compactification.

\begin{example}\label{ex.three}
As in the previous example, \m{\oM} is a compact manifold with smooth
boundary \m{\pa \oM=\{r =0\}}. We shall take now \m{\maV = r \maV_b=}
the space of vector fields on \m{\oM} that vanish on the boundary
\m{\pa \oM} and whose normal covariant derivative to the boundary also
vanishes. Using the same notation as in the previous two examples, at
the boundary \m{\pa \oM}, a local basis is given by
\begin{equation}
  r^2 \pa_r,\, r \pa_{y_2},\, \ldots ,\, r \pa_{y_n}\,.
\end{equation}

Again the orbits \m{Z_\alpha} are reduced to points, so \m{\alpha \in
  I := \pa \oM}, but this time \m{G_\alpha = T_\alpha \oM = T_\alpha
  \pa \oM \times \RR} is commutative. See also \cite{rodinoSG2007,
  MelroseICM, parenti, SchroheFrechet}. If \m{\pa \oM = S^{n-1}}, the
resulting geometry is that of an asymptotically Euclidean manifold. In
particular, \m{\RR\sp{n}} with the radial compactification fits into
the framework of this example.
\end{example}

\begin{example}\label{ex.four}
As in the previous two examples, \m{\oM} is a compact manifold with
smooth boundary \m{\pa \oM=\{r =0\}}. To construct our Lie algebra of
vector fields \m{\maV = \maV_e}, we assume that we are given a smooth
fibration \m{\pi : \pa M \to B}, and we let \m{\maV_e} to be the space
of vector fields on \m{\oM} that {\em are tangent} to the fibers of
\m{\pi : \pa \oM \to B}. By choosing a product coordinate system on a
small open subset of the boundary, a local basis is then given by
\begin{equation}
    r \pa_r,\, r \pa_{y_2}, \ldots , \, r  \pa_{y_k},\, \pa_{y_{k +
      1}},\, \ldots , \, \pa_{y_{n}} \,.
\end{equation}
Here \m{k} is such that the fibers of \m{\pi : \pa \oM \to B} have
dimension \m{n-k}. Thus, when \m{k=1} (so the fibration is over a
point, that is, \m{\pi : \pa \oM \to pt}), we recover our first
example, Example \ref{ex.one.bis}. On the other hand, when \m{k=n} (so
the fibration is \m{\pi : \pa \oM \to \pa \oM}), we recover our second
example, Example \ref{ex.two}. For \m{n=3} and \m{k=2}, we recover the
edge differential operators of Example \ref{ex.edge} (3) (see Equation
\eqref{eq.def.edge}). We note that \m{\maV := \maV_e \subset \maV_b =:
  \maW} yields a typical example of an anisotropic stucture.

In general, in this example, the set of orbits is \m{I = \{\alpha\}=
  B}, \m{Z_\alpha = \pi^{-1}(\alpha)}, and \m{G_{\alpha} = T_{\alpha}B
  \rtimes \RR} is a solvable Lie group with \m{\RR} acting by
dilations. The geometry is related to that of locally symmetric
spaces. Differential operators of this kind appear in the study of
behavior at the edge of boundary value problems. This example
generalizes the second example (Example \ref{ex.two}) and the same
references are valid for this example as well. See however also
\cite{Grushin71} for possibly the first paper on this type of
examples.
\end{example}

We conclude with some less standard examples.

\begin{example}\label{ex.five}
Let us assume that we are in the same framework as in the previous
example, Example, \ref{ex.four}, but we replace the fibration of
\m{\pa\oM} with a foliation.  Then the resulting Lie manifold may fail
to satisfy Theorem \ref{thm.fc.Lie}.  See however \cite{Rochon}.  It
is interesting to notice that in this case, the resulting class of
Riemann manifolds leads naturally to the study of foliation algebras.
\end{example}


Our last example in this subsection is on a manifold with corners.

\begin{example}\label{ex.six} Let \m{A \to \oM} be a Lie algebroid
(we do not assume \m{\Gamma(\oM; A) \subset \maV_b}) and let \m{\phi :
    \oM \to [0, \infty)} be a smooth function such that \m{ \{ \phi =
      0 \} = \pa \oM}. We define \m{\maV := \phi \Gamma(\oM; A)}. Then
    \m{(\oM, \maV)} defines a Lie manifold.
\end{example}

A related example deals with the $N$-body problem in Quantum Mechanics
\cite{Derezinski-Gerard} and can be used to give a new proof of the
classical HWZ-theorem on the essentials spectrum of these
operators. This is too long and technical to include here, however.

\subsection{Index theory}


Let now \m{(\oM, A)} be a Lie manifold and let \m{f} be the product of
the defining functions of all its faces. We consider then the exact
sequence
\begin{equation}\label{eq.gen.es}
	0 \to f \Psi^{-1}(\maG) \to \Psi^0(\maG) \to \Symb \to
          0 \,,
\end{equation}
which gives rise as before to the map $ \pa : K_1(\Symb) \to K_0(I).$
The {\bf Fredholm index problem} is in this case to compute
\begin{equation*}
	 Tr_* \circ \pa :  K_1(\Symb) \to \ZZ \,.
\end{equation*}
Since \m{ \phi_* \circ \pa = \psi_*}, where \m{\psi = \pa \phi \in
  \HP^1(\Symb)}, by Connes' results, the {\em Fredholm index problem}
is equivalent to computing the class of \m{\psi} in periodic cyclic
homology. This is a difficult problem that is still largely unsolved.
Undoubtedly, excision in cyclic theory will play an important role
\cite{CuntzQuillen0}. See also \cite{ConnesNCG, ConnesBook,
  ConnesMoscovici, NistorFol, NistorDocumenta, Perrot,
  PerrotRodsphon, Pflaum1, Rodsphon, Tsygan}. Instead of this
general problem, we shall look now at a particular, but relevant case
\cite{Carvalho, CarvalhoNistor}.

\begin{definition}
 We say that a be a Lie manifold \m{(\oM, \maW)} is {\em
   asymptotically commutative} if all vectors in \m{\maW} vanish on
 \m{\pa \oM} and all isotropy Lie algebras \m{\ker(q_x)} are
 commutative.
\end{definition}
 
Let \m{x_1, x_2, \ldots, x_k} be the defining functions of all the
hyperfaces of \m{\oM} and \m{f = x_1^{a_1} x_2^{a_2} \ldots x_k^{a_k}}
for some positive integers \m{a_j}. Then, for any Lie manifold, the
product \m{(\oM, \maV)} \m{\maW := f \maV} defines an asymptotically
commutative Lie manifold \m{(\oM, \maW)}.


If \m{(\oM, \maW)} is asymptotically commutative, then the algebra
$\Symb$ is commutative. Its completion will be of the form
\m{C(\Omega)}, as in the work of Cordes \cite{CordesBook, CordesOwen}.
Since the algebra \m{\Symb} is commutative in this case, it is
possible then to compute the index of Fredholm operators using
classical invariants \cite{CarvalhoNistor}. As an application, one
obtains also the index of Dirac operators coupled with potentials of
the form \m{f^{-1} V_0}, where \m{V_0} is invertible at infinity on
any Lie manifold (not just asymptotically commutative)
\cite{CarvalhoNistor}.

\subsection{Essential spectrum}
We now present some applications to essential spectra.  We use the
notation introduced in Subsection \ref{ssec.comparison}.  The
applications to essential spectra of operators are based on the fact
that for a self-adjoint operator \m{D} we have that
\begin{equation}
 \lambda \in \sigma_{ess}(D) \ 
 \Leftrightarrow \ D - \lambda \mbox{ is not Fredholm.}
\end{equation}
We shall consider the case of self-adjoint operators $D$ affiliated to
\m{\mfk{A}(M, \maV)} (that is, satisfying \m{(D + \imath)\sp{-1} \in
  \mfk{A}(M, \maV)} \cite{DamakGeorgescu, GeorgescuIftimovici}).  Then
we can use Theorem \ref{thm.fc.Lie} to study when \m{D - \lambda} is
(or is not) Fredholm.

We shall use these ideas for the open manifolds modeled by the Lie
algebra of vector fields \m{(\oM, \maV_b)} of Example
\eqref{ex.one.bis} and the associated positive Laplacian \m{\Delta_M}
\cite{LauterNistor}.

\begin{theorem} 
Let \m{\oM} be a manifold with corners and let \m{(\oM, \maV_b)} be
the Lie manifold of Example \eqref{ex.one.bis}.  We endow $M$, the
interior of $\oM$, with the induced metric.  Let \m{\Delta_M} be the
associated positive Laplacian on \m{M}. Assuming that $M \neq \oM$, we
have
\begin{equation*}
	\sigma(\Delta_{M}) \, = \, [0,\infty)
\end{equation*}
\end{theorem}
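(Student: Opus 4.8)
The statement asserts that the positive Laplacian $\Delta_M$ on the interior of a manifold with corners $\oM$, equipped with a cylindrical-end (b-)metric, has spectrum exactly $[0,\infty)$. The plan is to prove the two inclusions separately. For $\sigma(\Delta_M) \subset [0,\infty)$, this is immediate: $\Delta_M$ is a nonnegative self-adjoint operator (we are using the geometer's Laplacian $d^*d$, which is positive by construction), so its spectrum is contained in $[0,\infty)$. The substantive content is the reverse inclusion $[0,\infty) \subset \sigma(\Delta_M)$, and since $\Delta_M$ is self-adjoint and $\sigma(\Delta_M)$ is closed, it suffices to show $[0,\infty)$ contains no point of the resolvent set, or equivalently that $\Delta_M - \lambda$ fails to be invertible for every $\lambda > 0$.

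\textbf{Step 1: Reduce to a Fredholm/essential-spectrum question.} I would invoke the principle recalled just above the theorem: for a self-adjoint operator $D$ affiliated to the comparison algebra $\fa(M;\maV_b)$, we have $\lambda \in \sigma_{ess}(D)$ if and only if $D - \lambda$ is not Fredholm. First one checks that $\Delta_M$ (more precisely $(\Delta_M + \imath)^{-1}$) is affiliated to $\fa(M; \maV_b)$; this is essentially Definition \ref{def.comparison} together with Lemma \ref{lemma.princSymb}. Then, since $\Delta_M - \lambda$ is elliptic for every $\lambda$ (its principal symbol is $|\xi|^2$, independent of $\lambda$), Theorem \ref{thm.fc.Lie} says that $\Delta_M - \lambda : H^s(M) \to H^{s-2}(M)$ is Fredholm if and only if all the model operators $P_\alpha = \maI_{Z_\alpha}(\Delta_M) - \lambda$ are invertible on the corresponding $Z_\alpha \times G_\alpha$, where $\alpha$ ranges over orbits in $\pa \oM$. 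So it is enough to produce, for each $\lambda \ge 0$, some boundary orbit $Z_\alpha$ for which $\maI_{Z_\alpha}(\Delta_M) - \lambda$ is \emph{not} invertible; this forces $\Delta_M - \lambda$ non-Fredholm, hence $\lambda \in \sigma_{ess}(\Delta_M) \subset \sigma(\Delta_M)$. Taking the closure then gives $[0,\infty) \subset \sigma(\Delta_M)$. The hypothesis $M \ne \oM$ guarantees $\pa\oM \ne \emptyset$, so such orbits exist.

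\textbf{Step 2: Analyze a single model operator.} Pick a hyperface $H$ of $\oM$ and an orbit $Z_\alpha \subset H$; for the $\maV_b$-structure, $G_\alpha \simeq \RR^k$ with $k$ the codimension, and the indicial operator $\maI_{Z_\alpha}(\Delta_M)$ is a translation-invariant operator on $Z_\alpha \times \RR^k$ of the form $\Delta_{Z_\alpha} + \sum_{j=1}^k(-\pa_{t_j}^2)$ (possibly with some lower-order correction coming from the curvature/divergence terms, but with the same leading behavior). Conjugating by the Fourier transform in the $\RR^k$ variables, $\maI_{Z_\alpha}(\Delta_M)$ becomes the multiplication-by-$\tau$-dependent family $\widehat{\Delta}(\tau) = \Delta_{Z_\alpha} + |\tau|^2$ on $Z_\alpha$. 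Its spectrum, as $\tau$ ranges over $\RR^k$, is $\bigcup_{\tau} \big(\sigma(\Delta_{Z_\alpha}) + |\tau|^2\big) \supseteq [0,\infty)$, since $\sigma(\Delta_{Z_\alpha})$ contains $0$ (or at least some nonnegative number) and $|\tau|^2$ sweeps $[0,\infty)$. Consequently $\maI_{Z_\alpha}(\Delta_M) - \lambda$ is not invertible for any $\lambda \ge 0$: its spectrum (on $L^2(Z_\alpha\times\RR^k)$) is all of $[0,\infty)$. This is the step requiring the most care—one must confirm that the indicial operator really is of this essentially Euclidean-plus-product form, which follows from the explicit local basis $r_1\pa_{r_1},\dots,r_k\pa_{r_k},\pa_{y_{k+1}},\dots$ of Equation \eqref{eq.local.basis.one} together with Proposition \ref{prop.geometric} (the Laplacian is generated by $\maV_b$), and from the fact that passing to the indicial operator replaces each $r_j\pa_{r_j}$ by $\pa_{t_j}$ after Kondratiev's transform.

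\textbf{Main obstacle.} The genuinely delicate point is Step 2: verifying that the model (indicial) operator associated to a boundary orbit is exactly the Laplacian of a product cylindrical-end manifold—or at least has essential spectrum $[0,\infty)$—rather than something with a spectral gap. In the corner case, the model is a Laplacian on $Z_\alpha \times \RR^k$ for a \emph{lower-dimensional} manifold with corners $\overline{Z_\alpha}$, so in principle one recurses; but the recursion bottoms out at $k$ equal to the full codimension, where $\overline{Z_\alpha}$ is a closed (boundaryless) manifold and $\Delta_{Z_\alpha}$ has $0$ in its spectrum (constants), which is exactly what is needed. I would organize Step 2 around this observation: at a maximal-codimension face the model operator is $\Delta_{\text{closed}} + |\tau|^2$ on (closed manifold)$\times\RR^k$, whose spectrum is visibly $[0,\infty)$; this single orbit already kills Fredholmness of $\Delta_M - \lambda$ for all $\lambda \ge 0$, completing the proof.
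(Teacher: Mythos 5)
Your proof is correct and follows exactly the strategy the paper intends for this theorem (which it states without proof, citing \cite{LauterNistor}): combine the equivalence between membership in the essential spectrum and failure of the Fredholm property with Theorem \ref{thm.fc.Lie}, and observe that at an orbit in a face of maximal codimension the model operator is a translation-invariant Laplacian on $F \times \RR^k$ with $F$ a closed manifold, whose spectrum is all of $[0,\infty)$. The only point to tighten is the hedge in your Step 2 about cross terms: for a general compatible metric the model operator is the Laplace--Beltrami operator of an arbitrary translation-invariant metric on $F \times \RR^k$, not necessarily a product, but the same conclusion follows after a partial Fourier transform in the $\RR^k$-variables, since the bottom eigenvalue $\lambda_0(\tau)$ of the resulting elliptic fibre operators on $F$ depends continuously on $\tau$, equals $0$ at $\tau = 0$ (constants are harmonic), and tends to $+\infty$ as $|\tau| \to \infty$, hence sweeps out all of $[0,\infty)$.
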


A complete characterization of the spectrum (multiplicity of the
spectral measure, discreteness of the point spectrum, absence of
continuous singular spectrum) is wide open, in spite of its
importance.

Similarly, let \m{\Dir} be the Dirac operator associated to a
\m{Cliff(A)}-bundle over \m{\oM}. Then \cite{NistorPolyhedral}

\begin{theorem} 
The Dirac operator \m{\Dir} on \m{M = \oM \smallsetminus \pa \oM} is
invertible if, and only if, for any open face $F$ (including the
interior face $M$), the associated Dirac operator \m{\Dir_F}, has no
harmonic spinors (that is, it has zero kernel).
\end{theorem}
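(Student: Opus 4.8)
(Sketch)
The plan is to reduce invertibility to the Fredholmness criterion of Theorem~\ref{thm.fc.Lie} together with a direct analysis of the model operators attached to the faces, and then to close the argument by a downward induction on $\dim(\oM)$. Since $M$ is complete and of bounded geometry, $\Dir$ is essentially self-adjoint on $\CIc$, its domain is $H\sp{1}(M)$, and \dlp $\Dir$ invertible\drp\ means $0 \notin \sigma(\Dir)$; because $\Dir$ is self-adjoint this is equivalent to $\Dir \colon H\sp{1}(M) \to L\sp{2}(M)$ being Fredholm and injective, the cokernel being canonically isomorphic to the kernel. The base case of the induction is the case of $\oM$ smooth and compact: then there are no proper faces, $\Dir$ is automatically Fredholm, and it is invertible if and only if $\ker(\Dir) = 0$, that is, if and only if $\Dir_M$ has no harmonic spinors.

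For the inductive step, I would first apply Theorem~\ref{thm.fc.Lie} to $P = \Dir$ on the Lie manifold $(\oM, \maV_b)$ of Example~\ref{ex.one.bis}: this Lie manifold satisfies the assumptions (a)--(c) preceding that theorem, with orbits $Z_\alpha$ the open faces $F$ of $\oM$ and isotropy groups $G_\alpha \simeq \RR\sp{k}$, where $k$ is the codimension of $F$, which are abelian and hence amenable. As $\Dir$ is elliptic, the theorem gives that $\Dir$ is Fredholm if and only if each model operator $\Dir_\alpha$ on $Z_\alpha \times G_\alpha = F \times \RR\sp{k}$, $F$ a proper face, is invertible.

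Next I would identify $\Dir_\alpha$ explicitly. In the local coordinates $(r_1, \ldots, r_k, y')$ near a corner point of codimension $k$ of Example~\ref{ex.one}, the Kondratiev substitution $r_j = e\sp{t_j}$ turns $\maV_b$ into the vector fields $\pa_{t_1}, \ldots, \pa_{t_k}, \pa_{y'}$, and \dlp freezing the coefficients\drp\ of $\Dir$ at the face $F = \{r_1 = \cdots = r_k = 0\}$ produces the $\RR\sp{k}$-translation invariant operator
\begin{equation*}
  \Dir_\alpha \ = \ \sum_{j=1}\sp{k} c(dt_j)\, \pa_{t_j} \ + \ \Dir_F
\end{equation*}
on $F \times \RR\sp{k}$, where $\Dir_F$ is the Dirac operator on the (smaller) Lie manifold $\overline{F}$ and $c(\cdot)$ denotes Clifford multiplication. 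Its partial Fourier transform in $t = (t_1, \ldots, t_k)$ is the self-adjoint family $\Dir_F + i\sum_{j} \tau_j\, c(dt_j)$, $\tau \in \RR\sp{k}$; since each $c(dt_j)$ is conormal to $F$ it anticommutes with $\Dir_F$, and $\big(\sum_j \tau_j c(dt_j)\big)\sp{2} = -|\tau|\sp{2}$, so the square of this family equals $\Dir_F\sp{2} + |\tau|\sp{2}$. It is therefore invertible for every $\tau$, with uniformly bounded inverse, if and only if $0 \notin \sigma(\Dir_F)$; equivalently, $\Dir_\alpha$ is invertible on $L\sp{2}(F \times \RR\sp{k})$ if and only if $\Dir_F$ is invertible. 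Combined with the previous step, $\Dir$ is Fredholm if and only if $\Dir_F$ is invertible for every proper face $F$.

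Finally, applying the inductive hypothesis to each proper face $\overline{F}$, which has strictly smaller dimension, \dlp $\Dir_F$ is invertible\drp\ is equivalent to \dlp $\Dir_{F'}$ has no harmonic spinors for every face $F'$ of $\overline{F}$\drp, and the faces of $\overline{F}$ are exactly the faces of $\oM$ contained in $\overline{F}$. Hence $\Dir$ is Fredholm if and only if $\Dir_F$ has no harmonic spinors for every proper face $F$ of $\oM$; adjoining the condition $\ker(\Dir) = 0$, that is, \dlp $\Dir_M$ has no harmonic spinors\drp, yields the statement. The step I expect to require the most care is the explicit identification of $\Dir_\alpha$ and the Clifford-algebra computation of the square of its Fourier transform --- in particular, checking that the admissible connection and the grading of the $Cliff(A)$-module restrict to $\overline{F}$ so that $\Dir_F$ genuinely appears, and that it is the invertibility of $\Dir_F$ (not merely the triviality of its kernel) that is forced, which uses $k \ge 1$. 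The local-coordinate computations of \cite{aln1, LNGeometric, NistorPolyhedral} supply what is needed.
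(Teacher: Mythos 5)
Your sketch is correct and takes essentially the same route as the paper, whose entire proof is the remark that it uses Theorem \ref{thm.fc.Lie} together with the fact that the resulting operators \m{P_\alpha} are again Dirac operators: your identification of \m{P_\alpha} as \m{\sum_j c(dt_j)\pa_{t_j}+\Dir_F} on \m{F\times\RR^k}, the Clifford computation \m{\widehat{P_\alpha}(\tau)^2=\Dir_F^2+|\tau|^2} forcing invertibility (not just injectivity) of \m{\Dir_F}, and the downward induction over the faces are precisely the details that remark leaves implicit. The only point deserving an extra word in the inductive step is the one the paper itself flags in Section \ref{sec2}, namely that the closure \m{\overline{F}} of an open face need not be a manifold with corners in its own right, so one should justify why \m{(\overline{F},\maV_b)} and \m{\Dir_F} still fit the framework.
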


The proof uses Theorem \ref{thm.fc.Lie} and the fact that the
resulting operators \m{P_\alpha} are also Dirac operators.

Many similar results were obtained in Quantum Mechanics by Georgescu
and his collaborators \cite{DamakGeorgescu, GeorgescuIftimovici,
  GeorgescuNistor1}. In fact, certain problems related to the
\m{N}--body problem can be formulated in terms of a suitable
compactifications of \m{X := \RR\sp{3n}} to a manifold with corners
\m{\oM} on which \m{X} still acts and such that the Lie algebra of
vector fields \m{\maV} is obtained from the action of \m{X}
\cite{GeorgescuNistor2}. See also \cite{Derezinski-Gerard}.

\subsection{Hadamard well posedness on polyhedral domains}
This type of application \cite{BMNZ} is of a different nature and does
not use pseudodifferential operators or other operator algebras. It
uses only Lie manifolds and their geometry. Let then \m{\Omega \subset
  \RR\sp{n}} be an {\em open, bounded} subset of with boundary \m{\pa
  \Omega}. We shall consider the ``simplest'' boundary value problem
on \m{\Omega}, the {\em Poisson problem} with {\em Dirichlet} boundary
conditions:
\begin{equation}
    \begin{cases}  
    \ \  - \Delta u \, = \, f \, & \\
     \ \ \ u \vert_{\pa \Omega} \, = \, 0 \,. &
\end{cases} 
\end{equation}
We refer to \cite{BMNZ} for further references and details not
included here. Recall then the following classical result, which we
shall refer to as the {\em basic well-posedness theorem (for
  \m{\Delta} on smooth domains)}

\begin{theorem} 
Let us assume that \m{\pa\Omega} is smooth. Then the Laplacian
\m{\Delta} defines an isomorphism
\begin{equation*} 
     \Delta : H^{s+1}(\Omega) \cap \{ u \vert _{\pa \Omega} = 0 \} \to
     H^{s-1}(\Omega), \quad s \ge 0 \,.
\end{equation*}
\end{theorem}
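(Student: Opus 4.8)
The plan is to reduce everything to the base case $s=0$ and then bootstrap by elliptic regularity up to the boundary. For $s=0$ one works with the Dirichlet bilinear form $a(u,v):=\int_\Omega \nabla u\cdot\nabla\bar v\,dx$ on $H^1_0(\Omega)$, where $H^1_0(\Omega)=H^1(\Omega)\cap\{u|_{\partial\Omega}=0\}$ (the two descriptions coincide because $\partial\Omega$ is smooth, so the trace map $H^1(\Omega)\to H^{1/2}(\partial\Omega)$ is continuous with kernel $H^1_0(\Omega)$, which is also the $H^1$-closure of $\CIc(\Omega)$). Since $\Omega$ is bounded, the Poincaré inequality gives $\|u\|_{H^1}\le C\|\nabla u\|_{L^2}$ on $H^1_0(\Omega)$, so $a$ is bounded and coercive; by the Lax--Milgram lemma, for every $f\in H^{-1}(\Omega)=(H^1_0(\Omega))^*$ there is a unique $u\in H^1_0(\Omega)$ with $a(u,v)=\langle f,v\rangle$ for all $v$, that is, $\Delta u=f$ in the distributional sense (up to the harmless choice of sign convention on $\Delta$). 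Since $H^{s+1}|_{s=0}=H^1$ and $H^{s-1}|_{s=0}=H^{-1}$, this is exactly the theorem for $s=0$.

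Next I would upgrade the regularity: if $f\in H^{s-1}(\Omega)$ with $s\ge 1$, then the weak solution $u$ lies in $H^{s+1}(\Omega)$ and $\|u\|_{H^{s+1}}\le C\|f\|_{H^{s-1}}$. Interior regularity is the standard difference-quotient (or Fourier-multiplier) argument for the constant-coefficient operator $\Delta$. Boundary regularity is obtained by flattening a piece of $\partial\Omega$ by a smooth change of coordinates --- here the smoothness of $\partial\Omega$ is used in an essential way --- turning $\Delta$ into a uniformly elliptic second-order operator with smooth coefficients on a half-ball; one then estimates the tangential derivatives by taking tangential difference quotients (which preserve the homogeneous Dirichlet condition), and recovers the doubly-normal derivative $\pa_\nu^2 u$ algebraically from the equation itself. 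Iterating this $s$ times, and patching via a partition of unity subordinate to a finite cover of $\overline\Omega$ by interior balls and boundary coordinate patches, yields $u\in H^{s+1}(\Omega)$ with the a priori estimate. Alternatively one may invoke the Agmon--Douglis--Nirenberg theory: the Dirichlet condition for $\Delta$ satisfies the Lopatinski--Shapiro (complementing) condition, so $(\Delta,\gamma_0)$ is an isomorphism $H^{s+1}(\Omega)\to H^{s-1}(\Omega)\oplus H^{s+1/2}(\partial\Omega)$ for every $s\ge 0$, and the theorem follows by restricting to zero boundary data.

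Finally I would assemble the isomorphism. The trace map shows $\{u\in H^{s+1}(\Omega):u|_{\partial\Omega}=0\}$ is a closed subspace, hence a Banach space, and $\Delta$ maps it continuously into $H^{s-1}(\Omega)$ directly from the definition of the Sobolev spaces. Injectivity follows from the energy identity $\int_\Omega|\nabla u|^2=-\int_\Omega(\Delta u)\bar u$, valid after one integration by parts once $u|_{\partial\Omega}=0$ (or simply because $H^{s+1}(\Omega)\cap\{u|_{\partial\Omega}=0\}\subset H^1_0(\Omega)$ and the $s=0$ case is already injective): $\Delta u=0$ forces $\nabla u=0$, hence $u$ is locally constant, hence $u=0$ using $u|_{\partial\Omega}=0$ on each component. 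Surjectivity is precisely the regularity statement of the previous paragraph applied to the Lax--Milgram solution. A continuous bijection between Banach spaces is an isomorphism by the open mapping theorem, which finishes the proof. The main obstacle is the boundary elliptic regularity estimate; everything else is soft, and in a write-up one would most economically cite a standard reference for it (such as the books quoted in the paper) rather than reproduce the difference-quotient computation.
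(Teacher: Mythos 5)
Your proof is correct, and it is the standard one. Note that the paper itself offers no proof of this statement: it is quoted as the classical ``basic well-posedness theorem'' for the Dirichlet Laplacian on a smooth bounded domain, with the reader referred to the literature (e.g.\ \cite{BMNZ} and the references therein), so there is nothing internal to compare against. Your route --- Lax--Milgram on $H^1_0(\Omega)$ for $s=0$, boundary elliptic regularity by flattening and tangential difference quotients (or Agmon--Douglis--Nirenberg) for the surjectivity onto $H^{s-1}$, injectivity from the energy identity, and the open mapping theorem to conclude --- is exactly the argument the paper is implicitly invoking. The only point worth making explicit in a write-up is that the statement is for all real $s\ge 0$, not just integers; the difference-quotient bootstrap gives integer $s$, and the intermediate cases follow by interpolation or, as you note, directly from the Lopatinski--Shapiro/ADN formulation, which is valid for every real $s\ge 0$.
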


A useful consequence (easy to contradict for non-smooth domains) is:

\begin{corollary}\ 
If \m{f} and \m{\pa \Omega} are smooth, then the solution \m{u} of the
Poisson problem with Dirichlet boundary conditions is also smooth.
\end{corollary}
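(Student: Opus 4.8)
The plan is to obtain this as an immediate bootstrap consequence of the basic well-posedness theorem stated just above. Since $\Omega$ is open and bounded and $f$ is smooth up to the boundary, we have $f \in H^{s-1}(\Omega)$ for \emph{every} real $s \ge 0$. For each such $s$, the basic well-posedness theorem (applied with $-\Delta$ in place of $\Delta$, which is harmless since one is an isomorphism exactly when the other is) produces a unique $u_s \in H^{s+1}(\Omega)$ with $u_s\vert_{\pa\Omega} = 0$ and $-\Delta u_s = f$.

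The next step is to check that these solutions do not depend on $s$. For $s' \ge s \ge 0$ one has the inclusion $H^{s'+1}(\Omega) \subset H^{s+1}(\Omega)$, so $u_{s'}$ is in particular an $H^{s+1}$-solution of the same Dirichlet problem; by the injectivity of the map in the basic well-posedness theorem, $u_{s'} = u_s$. Writing $u$ for this common value, we conclude that the solution $u$ of the Poisson problem lies in $H^{s+1}(\Omega)$ for all $s \ge 0$, that is, $u \in \bigcap_{s \ge 0} H^s(\Omega)$.

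Finally I would invoke the Sobolev embedding theorem on the bounded domain $\Omega$ with smooth boundary: a function lying in $H^s(\Omega)$ for all $s$ lies in $C^k(\overline{\Omega})$ for all $k$, hence $u \in \CI(\overline{\Omega})$, which is the assertion. There is no real obstacle here; the only point requiring a moment's care is the consistency of the family $(u_s)$, i.e.\ that the higher-regularity solutions coincide with the lower-regularity one, which is exactly what uniqueness in the basic well-posedness theorem provides.
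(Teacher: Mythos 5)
Your argument is correct and is exactly the bootstrap the paper has in mind: the paper states this corollary as an immediate consequence of the basic well-posedness theorem and gives no separate proof, so your use of the isomorphism for every $s \ge 0$, uniqueness to identify the solutions across different $s$, and the Sobolev embedding on the smooth bounded domain fills in precisely the intended reasoning. No gaps.
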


It has been known for a very long time that the basic well posedness
theorem {\em does not extend} to the case when \m{\pa \Omega} is {\em
  non-smooth}.  This can be immediately seen from the following
example.

\begin{example} Let us assume that \m{\Omega} is the unit square, that is
\m{\Omega = (0,1)^2}. If \m{u} is smooth, then \m{\pa_x^2u(0,0) = 0 =
  \pa_y^2u(0,0)}, and hence \m{f(0,0) = \Delta u(0,0) = 0}. By
choosing \m{f(0,0) \neq 0}, we will thus obtain a solution \m{u} that
is not smooth.
\end{example}

In view of the many practical applications of the basic well-posedness
theorem, we want to extend it in some form to non-smooth domains.
Assume now \m{\Omega \subset \RR\sp{n}} is a {\em polyhedral domain.}
Exactly what a polyhedral domain means in three dimensions is subject
to debate. In this presentation, we shall use the definition in
\cite{BMNZ} in terms of stratified spaces (we refer to that paper--a
version of which paper was first circulated in 2004 as an IMA
preprint--for the exact definition). The key technical point in that
paper is to replace the classical Sobolev spaces \m{H^m(\Omega)},
introduced in Equation \eqref{eq.def.Sobolev} with weighted versions
as in Kondratiev\rp s paper \cite{Kondratiev67}.  Let us then denote
by \m{\rho} the distance function to the singular part of the boundary
and define
\begin{equation*}
  \Kond{m}{a}(\Omega) \, := \, \{u ,\, \rho^{|\alpha|-a}
    \pa^\alpha u \in L^2(\Omega),\ |\alpha| \le m\} \,.
\end{equation*}
(Notice the appearance of the factor \m{\rho}!)  Thus, in two
dimensions, \m{\rho(x)} is the distance from \m{x \in \Omega} to the
vertices of \m{\Omega}, whereas in three dimensions, \m{\rho(x)} is
the distance from \m{x \in \Omega} to the set of edges of \m{\Omega}.


\begin{theorem}\label{thm.polyhedral}
Let \m{\Omega \subset \RR^n} be a bounded polyhedral domain and \m{m
  \in \ZZ_+}.  Then there exists \m{\eta > 0} such that
\begin{equation*}
    \Delta \, : \, \Kond{m + 1}{a+1}(\Omega) \cap \{u \vert_{\pa \Omega} = 0
    \} \ \to \ \Kond{m - 1}{a - 1}(\Omega) \,,
\end{equation*}
is an isomorphism for all \m{|a|< \eta}.
\end{theorem}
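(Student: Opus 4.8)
The plan is to desingularize $\Omega$ into an open manifold with a Lie structure at infinity and to reduce the theorem to a perturbation-of-invertibility statement on fixed Sobolev spaces. First I would blow up the singular strata (vertices, edges, \ldots) of $\pa\Omega$ to obtain a manifold with corners $\oM$; its hyperfaces split into the ones coming from the smooth part of $\pa\Omega$, which remain genuine boundary faces at finite distance, and the ones created at the singular strata, which lie \dlp at infinity.\drp\ One takes $\maV$ to be the vector fields tangent to the faces at infinity, so that $(\oM,\maV)$ carries a Lie structure at infinity whose interior $M$ is diffeomorphic to $\Omega$ together with the smooth part of $\pa\Omega$; the metric $g_0 := \rho^{-2}g_e$, with $g_e$ the Euclidean metric and $\rho$ the distance to the singular set (which is a product of defining functions of the faces at infinity, and is smooth and bounded below near the smooth boundary), is a compatible metric. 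Two structural facts, each checked in generalized spherical coordinates near every stratum exactly as in Section~\ref{sec2}, are needed: the Euclidean Laplacian satisfies $\rho^2\Delta\in\Diff^2(\maV)$ and is elliptic in $\Diff(\maV)$; and $\Kond{m}{a}(\Omega)=\rho^{\,a-n/2}H^m(M)$ with equivalent norms for all $m$ and $a$, because $\rho^{|\alpha|}\pa^\alpha$ is, modulo lower order, a combination of $|\alpha|$-fold products of fields of $\maV$ with coefficients in $\CI(\oM)$, while $d\vol_{g_e}=\rho^{\,n}\,d\vol_{g_0}$.

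With these in hand, conjugation by powers of $\rho$ converts the map in the theorem into a fixed one. Writing $u=\rho^{\,(a+1)-n/2}v$ and $\Delta=\rho^{-2}(\rho^2\Delta)$ yields
\[
 \Delta u \;=\; \rho^{\,(a-1)-n/2}\,Q_a v\,,\qquad Q_a \;:=\; \rho^{-((a+1)-n/2)}\,(\rho^2\Delta)\,\rho^{\,(a+1)-n/2}\,.
\]
Since the fields of $\maV$ are tangent to the faces where $\rho$ vanishes, $\rho^{-1}X(\rho)\in\CI(\oM)$ for $X\in\maV$, so conjugation by powers of $\rho$ preserves $\Diff(\maV)$; hence $Q_a\in\Diff^2(\maV)$ is elliptic and depends polynomially, in particular norm-continuously as an operator on fixed spaces, on $a$. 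Because $\rho$ is smooth and bounded below near the finite boundary, the Dirichlet condition is preserved by the substitution, and under the identification above the map $\Delta:\Kond{m+1}{a+1}(\Omega)\cap\{u|_{\pa\Omega}=0\}\to\Kond{m-1}{a-1}(\Omega)$ is an isomorphism \emph{if and only if} $Q_a:H^{m+1}(M)\cap\{v\text{ vanishes on the finite boundary}\}\to H^{m-1}(M)$ is, and here the source and target no longer depend on $a$.

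Next I would establish the isomorphism at $a=0$ for all $m$. For $m=0$: using the identification above together with the Hardy-type inequality $\Kond{1}{1}(\Omega)\cap\{u|_{\pa\Omega}=0\}=H^1_0(\Omega)$, valid on polyhedral domains, $Q_0:H^1(M)\cap\{\text{Dir}\}\to H^{-1}(M)$ corresponds to $\Delta:H^1_0(\Omega)\to H^{-1}(\Omega)$, which is an isomorphism by Lax--Milgram and the Poincar\'e inequality. For general $m$: elliptic regularity for $Q_0\in\Diff^2(\maV)$ — the interior regularity with respect to the Lie structure, provable from the bounded geometry of $(M,g_0)$ by a partition of unity and difference quotients, together with classical Dirichlet regularity near the finite boundary, where $g_0$ is a smooth non-degenerate metric — upgrades any $v\in H^1(M)$ with $v|_{\pa\Omega}=0$ and $Q_0v\in H^{m-1}(M)$ to $v\in H^{m+1}(M)$. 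Combined with the $m=0$ case this makes $Q_0:H^{m+1}(M)\cap\{\text{Dir}\}\to H^{m-1}(M)$ a bijection of Hilbert spaces, hence a topological isomorphism by the open mapping theorem.

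Finally, since $a\mapsto Q_a$ is a norm-continuous family in $\maB\big(H^{m+1}(M)\cap\{\text{Dir}\},\,H^{m-1}(M)\big)$ that is invertible at $a=0$, and the invertible operators form an open set, $Q_a$ is invertible for $|a|<\eta$ with $\eta=\eta(m)>0$; transporting back through the identification $\Kond{m}{a}(\Omega)=\rho^{\,a-n/2}H^m(M)$ gives the theorem. Note that this argument uses only the geometry of the Lie manifold and no pseudodifferential operators, matching the remark before the theorem; one could instead invoke Theorem~\ref{thm.fc.Lie} and the indicial operators to pin down the optimal $\eta$, but that is not needed for the existence statement. I expect the main obstacle to be the first paragraph: constructing $\oM$ so that the smooth and singular parts of $\pa\Omega$ are treated correctly (genuine finite-distance boundary versus faces at infinity, and which faces $\maV$ is tangent to), verifying $\rho^2\Delta\in\Diff^2(\maV)$ and the norm equivalence $\Kond{m}{a}(\Omega)=\rho^{\,a-n/2}H^m(M)$, and proving the Hardy inequality; the analytic core — Lax--Milgram, elliptic regularity, and the perturbation in $a$ — is then comparatively routine.
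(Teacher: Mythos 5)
Your proposal follows the same architecture as the paper's (very brief) sketch: blow up the singular strata of \m{\pa\Omega} to obtain a Lie manifold with boundary, identify \m{\Kond{m}{a}(\Omega)} with \m{\rho^{a-n/2}H^m(M)} for the compatible metric \m{\rho^{-2}g_e}, and then do the analysis there; this is exactly the route of \cite{BMNZ, sobolev} that the paper cites. The one place where you genuinely deviate is the final step: the cited proofs establish that \m{\Delta_a} is Fredholm away from a discrete set of exceptional weights via the indicial (Mellin) operators, use index continuity and the duality \m{\Delta_a^*=\Delta_{-a}} to pass from \m{a=0} to \m{|a|<\eta}, and thereby identify the optimal \m{\eta}; you instead conjugate to a fixed pair of Sobolev spaces, observe that \m{a\mapsto Q_a} is polynomial in \m{a} with coefficients in \m{\Diff(\maV)} (hence norm-continuous by the boundedness lemma), and invoke openness of the invertibles. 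Your version is more elementary and entirely avoids pseudodifferential and indicial machinery, at the cost of producing an unspecified, possibly \m{m}-dependent \m{\eta}; since the theorem only asserts existence of some \m{\eta>0}, that is a legitimate trade. The load-bearing ingredients you flag -- the Hardy inequality identifying \m{\Kond{1}{1}\cap\{u\vert_{\pa\Omega}=0\}} with \m{H^1_0(\Omega)}, the norm equivalence \m{\Kond{m}{a}=\rho^{a-n/2}H^m(M)}, and elliptic regularity up to the Dirichlet face of the blown-up manifold (where the geometry is only uniformly, not literally, smooth near the corners where that face meets the faces at infinity) -- are indeed where all the work lies, and they are precisely the results proved in the references.
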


In two dimensions, this result is due to Kondratiev
\cite{Kondratiev67}.

The proof of Theorem \ref{thm.polyhedral} is based on a study of the
properties of a {\em Lie manifold with boundary} \m{\Sigma(\Omega)}
canonically associated to \m{\Omega} by a {\em blow-up} procedure.
The weighted Sobolev spaces \m{\maK_a^m(\Omega)} can be shown to
coincide with the usual Sobolev spaces associated to
\m{\Sigma(\Omega)}. See \cite{sobolev} for the definition of Lie
manifolds with boundary. General blow-up procedures for Lie manifolds
were studied in \cite{schroedinger}. It can be shown that the class of
Lie manifolds satisfying Theorem \ref{thm.fc.Lie} is closed under
blow-ups with respect to tame Lie submanifolds. Since most practical
applications deal with Lie manifolds that are obtained by such a
blow-up procedure from a smooth manifold, that establishes Theorem
\ref{thm.fc.Lie} in most cases of interest.

The blow-up procedure is an inductive procedure that consists in
successively replacing cones of the form \m{ CL := [0, \epsilon) \times
    L/ (\{0\} \times L)} with their associated cylinders \m{[0,
      \epsilon) \times L.} 

No well posedness result similar to Theorem \ref{thm.polyhedral} holds
for the {\em Neumann problem} (normal derivative at the boundary is
zero):
\begin{equation}
    \begin{cases}  
    \ - \Delta u \, = \, f \ & \\ \ \ \pa_\nu u \, = \, 0 \,, &
\end{cases} 
\end{equation}
where \m{\nu} is a continuous unit normal vector field at the
boundary. In fact, in three dimensions, the above problem is never
Fredholm.

Here is however a variant of Theorem \ref{thm.polyhedral} that has
been proved proved useful in practice. Let us consider a polygonal
domain \m{\Omega} and, for each vertex $P$ of $\Omega$, let us
consider a function \m{\chi_P \in \CI(\RR\sp{2})} that is equal to 1
around the vertex \m{P}, depends only on the distance to \m{P}, and
has small support. Let \m{W_s} be the linear span of the functions
\m{\chi_P}, where \m{P} ranges through the set of vertices of
\m{\Omega}.  Let \m{\{1\}\sp{\perp}} be the space of functions with
integral zero. Then we have the following result \cite{HMN}.

\begin{theorem}\label{thm.neumann}
Let \m{\Omega \subset \RR^2} be a connected, bounded polygonal domain
and \m{m \in \ZZ_+}. Then there exists \m{\eta > 0} such that
\begin{equation*}
    \Delta \, : \, \Big (\, \Kond{m + 1}{a+1}(\Omega) \cap \{\pa_\nu u
     = 0 \}\, + \, W_s \, \Big) \cap \{1\}\sp{\perp}
    \, \to \, \Kond{m - 1}{a - 1}(\Omega) \cap \{1\}\sp{\perp} \,,
\end{equation*}
is an isomorphism for all \m{0 < a < \eta}.
\end{theorem}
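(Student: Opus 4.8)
The plan is to transfer the problem to the Lie manifold with boundary $\Sigma(\Omega)$ obtained from $\Omega$ by the canonical blow-up of its vertices (the object already used for Theorem~\ref{thm.polyhedral}, see \cite{sobolev, BMNZ}), and then to run the three-step scheme announced in the Introduction: produce a Fredholm auxiliary operator and compute its index, prove that it is injective, and enlarge its domain by $W_s$ so that it becomes an isomorphism. Under the identification of the weighted spaces $\Kond{m}{a}(\Omega)$ with the Sobolev spaces of $\Sigma(\Omega)$, the Poisson problem with Neumann conditions becomes a boundary value problem for an operator of the form $\rho^{-2}P$ with $P\in\Diff(\maV)$, $\maV=\maV_b$ localized at the exceptional faces (Example~\ref{ex.one.bis}), the condition $\pa_\nu u=0$ being a genuine boundary condition along the face coming from $\pa\Omega$. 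Throughout, write $\maD_a := \Kond{m+1}{a+1}(\Omega)\cap\{\pa_\nu u = 0\}$ and let $N$ denote the number of vertices of $\Omega$, so that $\dim W_s = N$.

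First I would establish Fredholmness. By the version of Theorem~\ref{thm.fc.Lie} for Lie manifolds with boundary, the map $\Delta :\maD_a \to \Kond{m-1}{a-1}(\Omega)$ is Fredholm once it is elliptic — which it is — and all the model operators $P_P$ attached to the (zero-dimensional) vertex orbits are invertible. After the weight conjugation each $P_P$ is the Neumann Laplacian on the infinite model sector at the vertex $P$; the Mellin/Kondratiev analysis at a conical point identifies its invertibility with the condition that $a$ avoid the indicial set $\{\,k\pi/\alpha_P : k\in\ZZ\,\}$ coming from the eigenvalues $(k\pi/\alpha_P)^2$, $k\ge 0$, of the Neumann Laplace operator on the arc $[0,\alpha_P]$. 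The smallest positive element being $\pi/\alpha_P$, setting $\eta := \min_P \pi/\alpha_P > 0$ makes all the $P_P$ invertible for $0<a<\eta$; hence $\Delta:\maD_a \to \Kond{m-1}{a-1}(\Omega)$, and all the finite-dimensional modifications of it appearing below, are Fredholm in this range.

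Next I would compute $\ind\big(\Delta:\maD_a\to\Kond{m-1}{a-1}(\Omega)\big)$ for $0<a<\eta$ by a relative-index argument across the weight $a=0$. For $-\eta<a<0$ the domain $\Kond{m+1}{a+1}(\Omega)$ now contains the constants but still no other harmonic function (Green's formula), so the kernel is $\RR\cdot 1$; the cokernel is one-dimensional, detected by the functional $u\mapsto\int_\Omega u$, since $\Delta$ always lands in $\{1\}\sp{\perp}$ by the divergence theorem and the image is all of $\{1\}\sp{\perp}$; thus the index equals $0$ for $-\eta<a<0$. As $a$ decreases past $0$ the index drops by the total multiplicity of the indicial value $0$ over all vertices, namely $N$ (one constant mode per vertex; in groupoid terms this is the spectral flow of the family $(P_P)$), whence $\ind\big(\Delta:\maD_a\to\Kond{m-1}{a-1}(\Omega)\big) = -N$ for $0<a<\eta$. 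This is the step I expect to be the main obstacle: the whole statement hinges on getting this count right, and the decisive point is that the Neumann arc Laplacian, unlike its Dirichlet counterpart used for Theorem~\ref{thm.polyhedral}, carries the extra eigenvalue $0$, which is precisely what dictates both the enlargement by $W_s$ and the restriction to $\{1\}\sp{\perp}$.

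Finally I would combine injectivity with the index. For $0<a<\eta$ one has $\Kond{m+1}{a+1}(\Omega)\subset H^1(\Omega)$ and this space contains no nonzero constant, so any $u\in\maD_a$ with $\Delta u=0$ satisfies $\int_\Omega|\nabla u|^2=0$ (Green's formula, using $\pa_\nu u=0$), hence $u\equiv 0$; thus $T := \big(\Delta:\maD_a\to\Kond{m-1}{a-1}(\Omega)\big)$ is injective and, by the index computation, $\dim\coker T = N$. Each $\chi_P$ lies in $H^1(\Omega)$, is radial about $P$ — so $\nabla\chi_P$ is tangent to the (straight) edges issuing from $P$ while $\chi_P$ vanishes near the other vertices and edges, whence $\pa_\nu\chi_P=0$ — and the $\chi_P$ are linearly independent modulo $\Kond{m+1}{a+1}(\Omega)$ since each equals a nonzero constant near its own vertex. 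Hence $\maD_a + W_s$ is a direct sum with $\dim W_s = N$, so $\tilde T := \big(\Delta:\maD_a + W_s\to\Kond{m-1}{a-1}(\Omega)\big)$ has index $\ind T + N = 0$. Its kernel is exactly $\RR\cdot 1$: the identity $1 = \big(1-\sum_P\chi_P\big)+\sum_P\chi_P$ with $1-\sum_P\chi_P\in\maD_a$ shows $1$ belongs to the domain, and the $H^1$ energy identity forces every kernel element to be constant. Therefore $\dim\coker\tilde T = 1$, and since $\tilde T$ maps into $\{1\}\sp{\perp}$ by the divergence theorem, its image is all of $\{1\}\sp{\perp}$. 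Because $\RR\cdot 1\cap\{1\}\sp{\perp}=0$, restricting the domain to $\big(\maD_a+W_s\big)\cap\{1\}\sp{\perp}$ and the target to $\Kond{m-1}{a-1}(\Omega)\cap\{1\}\sp{\perp}$ turns $\tilde T$ into a continuous bijection of Banach spaces, hence an isomorphism by the open mapping theorem — which is the assertion of the theorem.
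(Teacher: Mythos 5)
Your overall strategy is the one the paper follows: Fredholmness for \m{0<a<\eta} via the indicial roots at the vertices, injectivity from the \m{H^1} energy identity, the count \m{\dim\coker = N}, and the enlargement by \m{W_s} followed by restriction to \m{\{1\}\sp{\perp}}. Your final assembly (the direct sum \m{\maD_a\oplus W_s}, kernel \m{=\RR\cdot 1}, surjectivity onto \m{\{1\}\sp{\perp}}, open mapping theorem) is correct. The gap is in the index computation --- precisely the step you single out as the decisive one --- and it contains two compensating errors.

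First, the claim that \m{\ind\big(\Delta:\maD_{a}\to\Kond{m-1}{a-1}(\Omega)\big)=0} for \m{-\eta<a<0} is false. Your energy argument for the kernel does not apply on that side: for \m{a<0} the space \m{\Kond{m+1}{a+1}(\Omega)} is \emph{not} contained in \m{H^1(\Omega)} (near each vertex it contains the logarithm of the distance to that vertex, whose gradient fails to be square integrable), so the kernel is not reduced to the constants; it also contains harmonic functions with logarithmic singularities at the vertices, and the index there is in fact \m{+N}, not \m{0}. Second, the jump of the index across \m{a=0} is \m{2N}, not \m{N}: the quantity that governs the jump is the \emph{algebraic} multiplicity of the indicial root \m{\lambda=0} of the pencil \m{\lambda^2+\pa_\theta^2} with Neumann conditions on \m{[0,\alpha_P]}, and \m{\lambda=0} is a double root (a Jordan chain of length two, producing both the homogeneous solution \m{1} and the solution \m{\log\rho}), whereas you counted only the geometric multiplicity ``one constant mode per vertex.'' The two mistakes cancel (\m{0-N=-N} versus the correct \m{N-2N=-N}), so you land on the right index, but the derivation is unsound. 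The paper's argument avoids both pitfalls: it uses the duality \m{\Delta_a\sp{*}=\Delta_{-a}}, which gives \m{\ind(\Delta_a)=-\ind(\Delta_{-a})} with no need to identify any kernel on the negative-weight side, and combines this with the relative index \m{\ind(\Delta_a)-\ind(\Delta_{-a})=-2N} to conclude \m{\ind(\Delta_a)=-N}. Replacing your second paragraph by that duality-plus-relative-index argument repairs the proof; the rest can stand as written.
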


The proof of this theorem is based on an index theorem on polygonal
domains, more precisely, a relative index theorem as follows.

\begin{proof} (Sketch) Let us
denote by \m{\Delta_a} the operator for a fixed value of the weight
\m{a}. Then one knows by \cite{Kondratiev67} (or an analysis similar
to the one needed for the APS index formula), that \m{\Delta_a} is
Fredholm if, and only if, \m{a \neq k \pi/\alpha}, where \m{k \in \ZZ}
and \m{\alpha} ranges through the values of the angles of our domain
\m{\Omega}. (For the Dirichlet problem one has a similar condition for
\m{a}, except that \m{k \neq 0}.) One sees that \m{\Delta_0} is not
Fredholm, but one can compute the relative index \m{\ind(\Delta_a) -
  \ind (\Delta_{-a}) = -2n}, for \m{a>0} small, where \m{n} is the
number of vertices of \m{\Omega} (anyone familiar with the APS theory
will have no problem proving this crucial fact). By definition,
\m{\Delta_{a}\sp{*} = \Delta_{-a}}, and hence this gives
\m{\ind(\Delta_{a}) = -n}. A standard energy estimate shows that
\m{\ker(\Delta_{a}) = 1} for \m{a > 0}, with the kernel given by the
constants. This is enough to complete the proof.
\end{proof}

Theorems \ref{thm.polyhedral} and \ref{thm.neumann} have found
applications to optimal rates of convergence for the Finite Element
Method in two and three dimensions \cite{BNZ3D2}, where optimal rates
of convergence in three dimensions were obtained for the first time.

\def\cprime{$'$}


\begin{thebibliography}{100}

\bibitem{Aastrup}
Johannes Aastrup, Severino~T. Melo, Bertrand Monthubert, and Elmar Schrohe.
\newblock Boutet de {M}onvel's calculus and groupoids {I}.
\newblock {\em J. Noncommut. Geom.}, 4(3):313--329, 2010.

\bibitem{AlbinMelrose1}
Pierre Albin and Richard Melrose.
\newblock Fredholm realizations of elliptic symbols on manifolds with boundary.
\newblock {\em J. Reine Angew. Math.}, 627:155--181, 2009.

\bibitem{Alldridge}
Alexander Alldridge and Troels~Roussau Johansen.
\newblock An index theorem for {W}iener-{H}opf operators.
\newblock {\em Adv. Math.}, 218(1):163--201, 2008.

\bibitem{amann}
Herbert Amann.
\newblock Uniformly regular and singular riemannian manifolds.
\newblock preprint arXiv:1405.382.

\bibitem{schroedinger}
Bernd Ammann, Catarina Carvalho, and Victor Nistor.
\newblock Regularity for eigenfunctions of {S}chr\"odinger operators.
\newblock {\em Lett. Math. Phys.}, 101(1):49--84, 2012.

\bibitem{sobolev}
Bernd Ammann, Alexandru~D. Ionescu, and Victor Nistor.
\newblock Sobolev spaces on {L}ie manifolds and regularity for polyhedral
  domains.
\newblock {\em Doc. Math.}, 11:161--206 (electronic), 2006.

\bibitem{aln1}
Bernd Ammann, Robert Lauter, and Victor Nistor.
\newblock On the geometry of {R}iemannian manifolds with a {L}ie structure at
  infinity.
\newblock {\em Int. J. Math. Math. Sci.}, (1-4):161--193, 2004.

\bibitem{aln2}
Bernd Ammann, Robert Lauter, and Victor Nistor.
\newblock Pseudodifferential operators on manifolds with a {L}ie structure at
  infinity.
\newblock {\em Ann. of Math. (2)}, 165(3):717--747, 2007.

\bibitem{ASkandalis1}
Iakovos Androulidakis and Georges Skandalis.
\newblock The holonomy groupoid of a singular foliation.
\newblock {\em J. Reine Angew. Math.}, 626:1--37, 2009.

\bibitem{APS1}
M.~Atiyah, V.~Patodi, and I.~Singer.
\newblock Spectral asymmetry and {R}iemannian geometry. {I}.
\newblock {\em Math. Proc. Cambridge Philos. Soc.}, 77:43--69, 1975.

\bibitem{AS3}
M.~F. Atiyah and I.~M. Singer.
\newblock The index of elliptic operators {III}.
\newblock {\em Ann. of Math.}, 87:546--604, 1968.

\bibitem{Aubin}
T.~Aubin.
\newblock Espaces de sobolev sur les vari\'et\'es riemanniennes.
\newblock {\em Bull. Sc. math.}, 100:149--173, 1970.

\bibitem{BNZ3D2}
C.~Bacuta, V.~Nistor, and L.~Zikatanov.
\newblock Improving the rate of convergence of high-order finite elements on
  polyhedra. {II}. {M}esh refinements and interpolation.
\newblock {\em Numer. Funct. Anal. Optim.}, 28(7-8):775--824, 2007.

\bibitem{BMNZ}
Constantin Bacuta, Anna~L. Mazzucato, Victor Nistor, and Ludmil Zikatanov.
\newblock Interface and mixed boundary value problems on {$n$}-dimensional
  polyhedral domains.
\newblock {\em Doc. Math.}, 15:687--745, 2010.

\bibitem{Beltitza1}
Ingrid Belti{\c{t}}{\u{a}} and Daniel Belti{\c{t}}{\u{a}}.
\newblock Algebras of symbols associated with the {W}eyl calculus for {L}ie
  group representations.
\newblock {\em Monatsh. Math.}, 167(1):13--33, 2012.

\bibitem{BenameurHeitsch}
Moulay-Tahar Benameur and James~L. Heitsch.
\newblock Index theory and non-commutative geometry. {I}. {H}igher families
  index theory.
\newblock {\em $K$-Theory}, 33(2):151--183, 2004.

\bibitem{Bismut}
J.-M. Bismut.
\newblock The index theorem for families of {D}irac operators: two heat
  equation proofs.
\newblock {\em Invent. Math.}, 83:91--151, 1986.

\bibitem{KarstenCR}
Karsten Bohlen.
\newblock Boutet de monvel operators on singular manifolds.
\newblock CR Acad Sci Paris (to appear).

\bibitem{Karsten}
Karsten Bohlen.
\newblock Boutet de {M}onvel’s calculus via groupoid actions.
\newblock Thesis, Universit\umlaut{a}t Hannover.

\bibitem{BrylinskiNistor}
J.-{L}. Brylinski and V.~Nistor.
\newblock Cyclic cohomology of etale groupoids.
\newblock {\em K-{T}heory}, 8:341--365, 1994.

\bibitem{buneciSurvey}
M{\u{a}}d{\u{a}}lina~Roxana Buneci.
\newblock Groupoid {$C^\ast$}-algebras.
\newblock {\em Surv. Math. Appl.}, 1:71--98 (electronic), 2006.

\bibitem{rodinoSG2007}
Marco Cappiello, Todor Gramchev, and Luigi Rodino.
\newblock Exponential decay and regularity for {SG}-elliptic operators with
  polynomial coefficients.
\newblock In {\em Hyperbolic problems and regularity questions}, Trends Math.,
  pages 49--58. Birkh\"auser, Basel, 2007.

\bibitem{Carvalho}
C.~Carvalho.
\newblock A {$K$}-theory proof of the cobordism invariance of the index.
\newblock {\em $K$-Theory}, 36(1-2):1--31 (2006), 2005.

\bibitem{CarvalhoNistor}
Catarina Carvalho and Victor Nistor.
\newblock An index formula for perturbed {D}irac operators on {L}ie manifolds.
\newblock {\em J. Geom. Anal.}, 24(4):1808--1843, 2014.

\bibitem{CarvalhoYu}
Catarina Carvalho and Yu~Qiao.
\newblock Layer potentials {$C^*$}-algebras of domains with conical points.
\newblock {\em Cent. Eur. J. Math.}, 11(1):27--54, 2013.

\bibitem{LindnerMem}
Simon~N. Chandler-Wilde and Marko Lindner.
\newblock Limit operators, collective compactness, and the spectral theory of
  infinite matrices.
\newblock {\em Mem. Amer. Math. Soc.}, 210(989):viii+111, 2011.

\bibitem{Orloff07}
Lisa~Orloff Clark.
\newblock C{CR} and {GCR} groupoid {$C^*$}-algebras.
\newblock {\em Indiana Univ. Math. J.}, 56(5):2087--2110, 2007.

\bibitem{OrloffHuef12}
Lisa~Orloff Clark and Astrid an~Huef.
\newblock The representation theory of {$C^*$}-algebras associated to
  groupoids.
\newblock {\em Math. Proc. Cambridge Philos. Soc.}, 153(1):167--191, 2012.

\bibitem{ConnesNCG}
A.~Connes.
\newblock Non commutative differential geometry.
\newblock {\em Publ. Math. IHES}, 62:41--144, 1985.

\bibitem{ConnesBook}
A.~Connes.
\newblock {\em Noncommutative geometry}.
\newblock Academic Press, San Diego, 1994.

\bibitem{ConnesMoscovici}
A.~Connes and H.~Moscovici.
\newblock The local index formula in noncommutative geometry.
\newblock {\em Geom. Funct. Anal.}, 5(2):174--243, 1995.

\bibitem{ConnesSkandalis}
A.~Connes and G.~Skandalis.
\newblock The longitudinal index theorem for foliations.
\newblock {\em Publ. Res. Inst. Math. Sci., Kyoto Univ.}, 20(6):1139--1183,
  1984.

\bibitem{ConnesFol}
Alain Connes.
\newblock Sur la th\'eorie non commutative de l'int\'egration.
\newblock In {\em Alg\`ebres d'op\'erateurs ({S}\'em., {L}es {P}lans-sur-{B}ex,
  1978)}, volume 725 of {\em Lecture Notes in Math.}, pages 19--143. Springer,
  Berlin, 1979.

\bibitem{CordesBook}
H.~Cordes.
\newblock {\em Spectral theory of linear differential operators and comparison
  algebras}, volume~76 of {\em London Mathematical Society Lecture Note
  Series}.
\newblock Cambridge University Press, Cambridge, 1987.

\bibitem{CordesOwen}
H.~Cordes and R.~McOwen.
\newblock The {$C\sp*$}-algebra of a singular elliptic problem on a noncompact
  {R}iemannian manifold.
\newblock {\em Math. Z.}, 153(2):101--116, 1977.

\bibitem{CordesHerman68}
H.~O. Cordes and E.~A. Herman.
\newblock Gel\cprime fand theory of pseudo differential operators.
\newblock {\em Amer. J. Math.}, 90:681--717, 1968.

\bibitem{Coriasco}
Sandro Coriasco and Lidia Maniccia.
\newblock On the spectral asymptotics of operators on manifolds with ends.
\newblock {\em Abstr. Appl. Anal.}, pages Art. ID 909782, 21, 2013.

\bibitem{CDN12}
Martin Costabel, Monique Dauge, and Serge Nicaise.
\newblock Analytic regularity for linear elliptic systems in polygons and
  polyhedra.
\newblock {\em Math. Models Methods Appl. Sci.}, 22(8):1250015, 63, 2012.

\bibitem{CuntzQuillen0}
J.~Cuntz and D.~Quillen.
\newblock Excision in bivariant periodic cyclic cohomology.
\newblock {\em Invent. Math.}, 127:67--98, 1997.

\bibitem{DamakGeorgescu}
Mondher Damak and Vladimir Georgescu.
\newblock Self-adjoint operators affiliated to {$C^*$}-algebras.
\newblock {\em Rev. Math. Phys.}, 16(2):257--280, 2004.

\bibitem{DaugeBook}
Monique Dauge.
\newblock {\em Elliptic boundary value problems on corner domains}, volume 1341
  of {\em Lecture Notes in Mathematics}.
\newblock Springer-Verlag, Berlin, 1988.
\newblock Smoothness and asymptotics of solutions.

\bibitem{DebordLescure2}
C.~Debord and J.-M. Lescure.
\newblock {$K$}-duality for pseudomanifolds with isolated singularities.
\newblock {\em J. Funct. Anal.}, 219(1):109--133, 2005.

\bibitem{Debord1}
Claire Debord.
\newblock Holonomy groupoids of singular foliations.
\newblock {\em J. Differential Geom.}, 58(3):467--500, 2001.

\bibitem{Debord2}
Claire Debord.
\newblock Local integration of {L}ie algebroids.
\newblock In {\em Lie algebroids and related topics in differential geometry
  ({W}arsaw, 2000)}, volume~54 of {\em Banach Center Publ.}, pages 21--33.
  Polish Acad. Sci., Warsaw, 2001.

\bibitem{DebordLescure}
Claire Debord and Jean-Marie Lescure.
\newblock {$K$}-duality for stratified pseudomanifolds.
\newblock {\em Geom. Topol.}, 13(1):49--86, 2009.

\bibitem{DLR}
Claire Debord, Jean-Marie Lescure, and Fr\arp{e}d\arp{e}ric Rochon.
\newblock Pseudodifferential operators on manifolds with fibred corners.
\newblock preprint arXiv:1112.4575, to appear in Annales de l'Institut Fourier.

\bibitem{DebordSkandalis}
Claire Debord and Georges Skandalis.
\newblock Adiabatic groupoid, crossed product by {$\mathbb{R}_+^\ast$} and
  pseudodifferential calculus.
\newblock {\em Adv. Math.}, 257:66--91, 2014.

\bibitem{Derezinski-Gerard}
J.~Derezi{\'n}ski and C.~G{\'e}rard.
\newblock {\em Scattering theory of classical and quantum {$N$}-particle
  systems}.
\newblock Texts and Monographs in Physics. Springer-Verlag, Berlin, 1997.

\bibitem{Disconzi}
Marcelo Disconzi, Yuanzhen Shao, and Gieri Simonett.
\newblock Some remarks on uniformly regular riemannian manifolds.
\newblock preprint arXiv:1410.8627.

\bibitem{Douglas}
R.~G. Douglas.
\newblock {\em Banach algebra techniques in {O}perator {T}heory}, volume~49 of
  {\em Pure and {A}pplied {M}ath.}
\newblock Academic Press, 1972.

\bibitem{EchterhoffMemoirs}
Siegfried Echterhoff.
\newblock The primitive ideal space of twisted covariant systems with
  continuously varying stabilizers.
\newblock {\em Math. Ann.}, 292(1):59--84, 1992.

\bibitem{Echterhoff96}
Siegfried Echterhoff.
\newblock Crossed products with continuous trace.
\newblock {\em Mem. Amer. Math. Soc.}, 123(586):viii+134, 1996.

\bibitem{GeorgescuIftimovici}
Vladimir Georgescu and Andrei Iftimovici.
\newblock {$C^*$}-algebras of quantum {H}amiltonians.
\newblock In {\em Operator algebras and mathematical physics ({C}onstan\c ta,
  2001)}, pages 123--167. Theta, Bucharest, 2003.

\bibitem{GeorgescuNistor2}
Vladimir Georgescu and Victor Nistor.
\newblock On the essential spectrum of {$N$}-body hamiltonians with
  asymptotically homogeneous interactions.
\newblock submitted.

\bibitem{GeorgescuNistor1}
Vladimir Georgescu and Victor Nistor.
\newblock The essential spectrum of {$N$}-body systems with asymptotically
  homogeneous order-zero interactions.
\newblock {\em C. R. Math. Acad. Sci. Paris}, 352(12):1023--1027, 2014.

\bibitem{Gerard}
C.~G{\'e}rard and M.~Wrochna.
\newblock Hadamard states for the linearized {Y}ang-{M}ills equation on curved
  spacetime.
\newblock {\em Comm. Math. Phys.}, 337(1):253--320, 2015.

\bibitem{GilkeyBook}
Peter~B. Gilkey.
\newblock {\em Invariance theory, the heat equation, and the {A}tiyah-{S}inger
  index theorem}.
\newblock Studies in Advanced Mathematics. CRC Press, Boca Raton, FL, second
  edition, 1995.

\bibitem{Grieser}
Daniel Grieser and Eug{\'e}nie Hunsicker.
\newblock Pseudodifferential operator calculus for generalized
  {$\mathbb{Q}$}-rank 1 locally symmetric spaces. {I}.
\newblock {\em J. Funct. Anal.}, 257(12):3748--3801, 2009.

\bibitem{nadine}
Nadine Gro{\ss}e and Cornelia Schneider.
\newblock Sobolev spaces on {R}iemannian manifolds with bounded geometry:
  general coordinates and traces.
\newblock {\em Math. Nachr.}, 286(16):1586--1613, 2013.

\bibitem{Grushin71}
V.~V. Gru{\v{s}}in.
\newblock A certain class of elliptic pseudodifferential operators that are
  degenerate on a submanifold.
\newblock {\em Mat. Sb. (N.S.)}, 84 (126):163--195, 1971.

\bibitem{Moroianu2010}
Colin Guillarmou, Sergiu Moroianu, and Jinsung Park.
\newblock Eta invariant and {S}elberg zeta function of odd type over convex
  co-compact hyperbolic manifolds.
\newblock {\em Adv. Math.}, 225(5):2464--2516, 2010.

\bibitem{SABR}
P.~Hagan, D.~Kumar, A.~Lesniewski, and D.~Woodward.
\newblock Managing smile risk.
\newblock {\em Wilmott Magazine}, pages 84--108, 1991 September.

\bibitem{HebeyBook}
Emmanuel Hebey.
\newblock {\em Sobolev spaces on {R}iemannian manifolds}, volume 1635 of {\em
  Lecture Notes in Mathematics}.
\newblock Springer-Verlag, Berlin, 1996.

\bibitem{Hebey1}
Emmanuel Hebey and Fr{\'e}d{\'e}ric Robert.
\newblock Sobolev spaces on manifolds.
\newblock In {\em Handbook of global analysis}, pages 375--415, 1213. Elsevier
  Sci. B. V., Amsterdam, 2008.

\bibitem{HigginsMackenzie1}
Philip~J. Higgins and Kirill Mackenzie.
\newblock Algebraic constructions in the category of {L}ie algebroids.
\newblock {\em J. Algebra}, 129(1):194--230, 1990.

\bibitem{HigginsMackenzie2}
Philip~J. Higgins and Kirill C.~H. Mackenzie.
\newblock Fibrations and quotients of differentiable groupoids.
\newblock {\em J. London Math. Soc. (2)}, 42(1):101--110, 1990.

\bibitem{SkandalisFolBC}
N.~Higson, V.~Lafforgue, and G.~Skandalis.
\newblock Counterexamples to the {B}aum-{C}onnes conjecture.
\newblock {\em Geom. Funct. Anal.}, 12(2):330--354, 2002.

\bibitem{hormander3}
L.~H\"ormander.
\newblock {\em The analysis of linear partial differential operators, {\rm
  vol.\ 3}. {P}seudo-differential operators}, volume 274 of {\em Grundlehren
  der Mathematischen Wissenschaften}.
\newblock Springer-Verlag, Berlin - Heidelberg - New York, 1985.

\bibitem{IonescuWilliamsEHC}
Marius Ionescu and Dana Williams.
\newblock The generalized {E}ffros-{H}ahn conjecture for groupoids.
\newblock {\em Indiana Univ. Math. J.}, 58(6):2489--2508, 2009.

\bibitem{Jones}
V.~Jones and V.~S. Sunder.
\newblock {\em Introduction to subfactors}, volume 234 of {\em London
  Mathematical Society Lecture Note Series}.
\newblock Cambridge University Press, Cambridge, 1997.

\bibitem{Karoubi}
M.~Karoubi.
\newblock Connexions, courbure et classes characteristiques en {K}-theorie
  algebrique.
\newblock In {\em Proceedings of the Canadian Mathematical Society}, volume~2,
  pages 19--27, 1982.
\newblock part 1.

\bibitem{KSkandalis}
Mahmood Khoshkam and Georges Skandalis.
\newblock Regular representation of groupoid {$C^*$}-algebras and applications
  to inverse semigroups.
\newblock {\em J. Reine Angew. Math.}, 546:47--72, 2002.

\bibitem{Kondratiev67}
V.~A. Kondrat{\cprime}ev.
\newblock Boundary value problems for elliptic equations in domains with
  conical or angular points.
\newblock {\em Transl. Moscow Math. Soc.}, 16:227--313, 1967.

\bibitem{kordyukovLp1}
Yu.~A. Kordyukov.
\newblock {$L^p$}-theory of elliptic differential operators with bounded
  coefficients.
\newblock {\em Vestnik Moskov. Univ. Ser. I Mat. Mekh.}, (4):98--100, 1988.

\bibitem{kordyukovLp2}
Yu.~A. Kordyukov.
\newblock {$L^p$}-theory of elliptic differential operators on manifolds of
  bounded geometry.
\newblock {\em Acta Appl. Math.}, 23(3):223--260, 1991.

\bibitem{kordyukovFol}
Yu.~A. Kordyukov.
\newblock Index theory and noncommutative geometry on manifolds with
  foliations.
\newblock {\em Uspekhi Mat. Nauk}, 64(2(386)):73--202, 2009.

\bibitem{KMR}
V.~Kozlov, V.~Maz{\cprime}ya, and J.~Rossmann.
\newblock {\em Spectral problems associated with corner singularities of
  solutions to elliptic equations}, volume~85 of {\em Mathematical Surveys and
  Monographs}.
\newblock American Mathematical Society, Providence, RI, 2001.

\bibitem{Lauter}
R.~Lauter.
\newblock Pseudodifferential analysis on conformally compact spaces.
\newblock {\em Mem. Amer. Math. Soc.}, 163(777):xvi+92, 2003.

\bibitem{LauterMoroianu1}
R.~Lauter and S.~Moroianu.
\newblock Fredholm theory for degenerate pseudodifferential operators on
  manifolds with fibered boundaries.
\newblock {\em Comm. Partial Differential Equations}, 26:233--283, 2001.

\bibitem{LauterMoroianu2}
R.~Lauter and S.~Moroianu.
\newblock Homology of pseudodifferential operators on manifolds with fibered
  cusps.
\newblock {\em Trans. Amer. Math. Soc.}, 355(8):3009--3046 (electronic), 2003.

\bibitem{LMN1}
Robert Lauter, Bertrand Monthubert, and Victor Nistor.
\newblock Pseudodifferential analysis on continuous family groupoids.
\newblock {\em Doc. Math.}, 5:625--655 (electronic), 2000.

\bibitem{LNGeometric}
Robert Lauter and Victor Nistor.
\newblock Analysis of geometric operators on open manifolds: a groupoid
  approach.
\newblock In {\em Quantization of singular symplectic quotients}, volume 198 of
  {\em Progr. Math.}, pages 181--229. Birkh\"auser, Basel, 2001.

\bibitem{LauterNistor}
Robert Lauter and Victor Nistor.
\newblock On spectra of geometric operators on open manifolds and
  differentiable groupoids.
\newblock {\em Electron. Res. Announc. Amer. Math. Soc.}, 7:45--53
  (electronic), 2001.

\bibitem{Mantoiu2}
Max Lein, Marius M{\u{a}}ntoiu, and Serge Richard.
\newblock Magnetic pseudodifferential operators with coefficients in
  {$C^\ast$}-algebras.
\newblock {\em Publ. Res. Inst. Math. Sci.}, 46(4):755--788, 2010.

\bibitem{Lesch}
Matthias Lesch.
\newblock {\em Operators of {F}uchs type, conical singularities, and asymptotic
  methods}, volume 136 of {\em Teubner-Texte zur Mathematik [Teubner Texts in
  Mathematics]}.
\newblock B. G. Teubner Verlagsgesellschaft mbH, Stuttgart, 1997.

\bibitem{LeschPflaum}
Matthias Lesch and Markus~J. Pflaum.
\newblock Traces on algebras of parameter dependent pseudodifferential
  operators and the eta-invariant.
\newblock {\em Trans. Amer. Math. Soc.}, 352(11):4911--4936, 2000.

\bibitem{HMN}
H.~Li, A.~Mazzucato, and V.~Nistor.
\newblock Analysis of the finite element method for transmission/mixed boundary
  value problems on general polygonal domains.
\newblock {\em Electronic Transactions Numerical Analysis}, 37:41--69, 2010.

\bibitem{Hengguang}
Hengguang Li.
\newblock A-priori analysis and the finite element method for a class of
  degenerate elliptic equations.
\newblock {\em Math. Comp.}, 78(266):713--737, 2009.

\bibitem{LockhartOwen}
Robert~B. Lockhart and Robert~C. McOwen.
\newblock On elliptic systems in {${\bf R}^{n}$}.
\newblock {\em Acta Math.}, 150(1-2):125--135, 1983.

\bibitem{LodayQuillen}
J.-L. Loday and D.~Quillen.
\newblock Cyclic homology and the {L}ie homology of matrices.
\newblock {\em Comment. Math. Helv.}, 59:565--591, 1984.

\bibitem{Mehdi}
N.~Lohou{\'e} and S.~Mehdi.
\newblock Estimnates for the heat kernel on differential forms on {R}iemannian
  symmetric spaces and applications.
\newblock {\em Asian J. Math.}, 14(4):529--580, 2010.

\bibitem{MackenzieBook1}
K.~Mackenzie.
\newblock {\em Lie groupoids and {L}ie algebroids in differential geometry},
  volume 124 of {\em London Math. Soc. Lect. Note Series}.
\newblock Cambridge University Press, Cambridge, 1987.

\bibitem{Manin1}
Y.~Manin.
\newblock {\em Topics in Noncommutative Geometry}.
\newblock Princeton University Press, Princeton, New Jersey, 1991.

\bibitem{MantoiuReine}
Marius M{\u{a}}ntoiu.
\newblock {$C^\ast$}-algebras, dynamical systems at infinity and the essential
  spectrum of generalized {S}chr\"odinger operators.
\newblock {\em J. Reine Angew. Math.}, 550:211--229, 2002.

\bibitem{Mantoiu3}
Marius M{\u{a}}ntoiu, Radu Purice, and Serge Richard.
\newblock Twisted crossed products and magnetic pseudodifferential operators.
\newblock In {\em Advances in operator algebras and mathematical physics},
  volume~5 of {\em Theta Ser. Adv. Math.}, pages 137--172. Theta, Bucharest,
  2005.

\bibitem{Mantoiu1}
Marius M{\u{a}}ntoiu, Radu Purice, and Serge Richard.
\newblock Spectral and propagation results for magnetic {S}chr\"odinger
  operators; a {$C^*$}-algebraic framework.
\newblock {\em J. Funct. Anal.}, 250(1):42--67, 2007.

\bibitem{Mazzeo91}
R.~Mazzeo.
\newblock {Elliptic theory of differential edge operators. I.}
\newblock {\em Commun. Partial Differ. Equations}, 16(10):1615--1664, 1991.

\bibitem{MeloNestSchrohe}
S.~T. Melo, R.~Nest, and E.~Schrohe.
\newblock {$C\sp \ast$}-structure and {$K$}-theory of {B}outet de {M}onvel's
  algebra.
\newblock {\em J. Reine Angew. Math.}, 561:145--175, 2003.

\bibitem{MelrosePiazza}
R.~Melrose and P.~Piazza.
\newblock Analytic {$K$}-theory on manifolds with corners.
\newblock {\em Adv. Math.}, 92(1):1--26, 1992.

\bibitem{MelroseMendoza}
R.~B. Melrose and G.~Mendoza.
\newblock Elliptic operators of totally characteristic type.
\newblock MSRI Preprint.

\bibitem{MelroseICM}
R.B. Melrose.
\newblock Pseudodifferential operators, corners and singular limits.
\newblock In {\em Proceeding of the International Congress of Mathematicians,
  Kyoto}, pages 217--234, Berlin - Heidelberg - New York, 1990.
  Springer-Verlag.

\bibitem{MelroseNistor}
Richard Melrose and Victor Nistor.
\newblock Homology of pseudodifferential operators {I}" (manifolds with
  boundary).
\newblock preprint http://front.math.ucdavis.edu/9606.5055.

\bibitem{MoerdijkFolBook}
I.~Moerdijk and J.~Mr{\v{c}}un.
\newblock {\em Introduction to foliations and {L}ie groupoids}, volume~91 of
  {\em Cambridge Studies in Advanced Mathematics}.
\newblock Cambridge University Press, Cambridge, 2003.

\bibitem{bm-these}
B.~Monthubert.
\newblock {\em Groupo\"\i des et calcul pseudo-diff\'erentiel sur les
  vari\'et\'es \`a coins}.
\newblock PhD thesis, Universit\'e Paris 7, 1998.

\bibitem{Monthubert}
Bertrand Monthubert.
\newblock Pseudodifferential calculus on manifolds with corners and groupoids.
\newblock {\em Proc. Amer. Math. Soc.}, 127(10):2871--2881, 1999.

\bibitem{MN3}
Bertrand Monthubert and Victor Nistor.
\newblock The {$K$}-groups and the index theory of certain comparison
  {$C^*$}-algebras.
\newblock In {\em Noncommutative geometry and global analysis}, volume 546 of
  {\em Contemp. Math.}, pages 213--224. Amer. Math. Soc., Providence, RI, 2011.

\bibitem{MonthubertPierrot}
Bertrand Monthubert and Fran{\c{c}}ois Pierrot.
\newblock Indice analytique et groupo\"\i des de {L}ie.
\newblock {\em C. R. Acad. Sci. Paris S\'er. I Math.}, 325(2):193--198, 1997.

\bibitem{MoroianuNistor}
S.~Moroianu and V.~Nistor.
\newblock Index and homology of pseudodifferential operators on manifolds with
  boundary.
\newblock In {\em Perspectives in operator algebras and mathematical physics},
  volume~8 of {\em Theta Ser. Adv. Math.}, pages 123--148. Theta, Bucharest,
  2008.

\bibitem{NazaSavinSternin}
V.~E. Naza{\u\i}kinski{\u\i}, A.~Yu. Savin, and B.~Yu. Sternin.
\newblock The {A}tiyah-{B}ott index on stratified manifolds.
\newblock {\em Sovrem. Mat. Fundam. Napravl.}, 34:100--108, 2009.

\bibitem{Schulze}
V.~E. Naza{\u\i}kinski{\u\i}, A.~Yu. Savin, B.~Yu. Sternin, and B.-W. Shulze.
\newblock On the index of elliptic operators on manifolds with edges.
\newblock {\em Mat. Sb.}, 196(9):23--58, 2005.

\bibitem{Neeb}
Karl-Hermann Neeb.
\newblock Towards a {L}ie theory of locally convex groups.
\newblock {\em Jpn. J. Math.}, 1(2):291--468, 2006.

\bibitem{RodinoBook}
Fabio Nicola and Luigi Rodino.
\newblock {\em Global pseudo-differential calculus on {E}uclidean spaces},
  volume~4 of {\em Pseudo-Differential Operators. Theory and Applications}.
\newblock Birkh\"auser Verlag, Basel, 2010.

\bibitem{NistorFol}
V.~Nistor.
\newblock The index of operators on foliated bundles.
\newblock {\em J. Funct. Anal.}, 141(2):421--434, 1996.

\bibitem{NistorDocumenta}
V.~Nistor.
\newblock Higher index theorems and the boundary map in cyclic cohomology.
\newblock {\em Doc. Math.}, 2:263--295 (electronic), 1997.

\bibitem{NistorPolyhedral}
V.~Nistor.
\newblock Pseudodifferential operators on non-compact manifolds and analysis on
  polyhedral domains.
\newblock In {\em Spectral geometry of manifolds with boundary and
  decomposition of manifolds}, volume 366 of {\em Contemp. Math.}, pages
  307--328. Amer. Math. Soc., Providence, RI, 2005.

\bibitem{NistorPrudhon}
V.~Nistor and N.~Prudhon.
\newblock Exhausting families of representations and spectra of
  pseudodifferential operators.
\newblock Preprint 2014, http://front.math.ucdavis.edu/1411.7921, submitted.

\bibitem{NistorJapan}
Victor Nistor.
\newblock Groupoids and the integration of {L}ie algebroids.
\newblock {\em J. Math. Soc. Japan}, 52(4):847--868, 2000.

\bibitem{NWX}
Victor Nistor, Alan Weinstein, and Ping Xu.
\newblock Pseudodifferential operators on differential groupoids.
\newblock {\em Pacific J. Math.}, 189(1):117--152, 1999.

\bibitem{palais}
R.~S. Palais.
\newblock {\em Seminar on the {A}tiyah-{S}inger index theorem}.
\newblock Princeton University Press, Princeton, N.J., 1965.

\bibitem{parenti}
C.~Parenti.
\newblock Operatori pseudodifferentiali in {$\mathbb{R}^n$} e applicazioni.
\newblock {\em Annali Mat. Pura ed App.}, 93:391--406, 1972.

\bibitem{Pasquale}
A.~Pasquale.
\newblock Resonances for the laplacian on riemannian symmetric spaces: the case
  of $\operatorname{SL}(3; \mathbb{R})/\operatorname{SO}(3)$.
\newblock Preprint arXiv:1411.6527.

\bibitem{Perez}
Joe~J. Perez.
\newblock Generalized {F}redholm properties for invariant pseudodifferential
  operators.
\newblock {\em Acta Appl. Math.}, 124:55--72, 2013.

\bibitem{Perrot}
Denis Perrot.
\newblock Pseudodifferential extension and {T}odd class.
\newblock {\em Adv. Math.}, 246:265--302, 2013.

\bibitem{PerrotRodsphon}
Denis Perrot and Rudy Rodsphon.
\newblock An equivariant index theorem for hypoelliptic operators.
\newblock http://arxiv.org/abs/1412.5042.

\bibitem{Pflaum1}
M.~J. Pflaum, H.~Posthuma, and X.~Tang.
\newblock The index of geometric operators on {L}ie groupoids.
\newblock {\em Indag. Math. (N.S.)}, 25(5):1135--1153, 2014.

\bibitem{PV1}
M.~Pimsner and D.~Voiculescu.
\newblock Exact sequences for {$K$}-groups and {E}xt-groups of certain
  cross-product {$C^{\ast} $}-algebras.
\newblock {\em J. Operator Theory}, 4(1):93--118, 1980.

\bibitem{PV2}
M.~Pimsner and D.~Voiculescu.
\newblock {$K$}-groups of reduced crossed products by free groups.
\newblock {\em J. Operator Theory}, 8(1):131--156, 1982.

\bibitem{plamenevskiiBook}
B.~A. Plamenevski{\u\i}.
\newblock {\em Algebras of pseudodifferential operators}, volume~43 of {\em
  Mathematics and its Applications (Soviet Series)}.
\newblock Kluwer Academic Publishers Group, Dordrecht, 1989.
\newblock Translated from the Russian by R. A. M. Hoksbergen.

\bibitem{Pradines}
Jean Pradines.
\newblock Th\'eorie de {L}ie pour les groupo\"\i des diff\'erentiables.
  {C}alcul diff\'erenetiel dans la cat\'egorie des groupo\"\i des
  infinit\'esimaux.
\newblock {\em C. R. Acad. Sci. Paris S\'er. A-B}, 264:A245--A248, 1967.

\bibitem{Rab3}
V.~Rabinovich, B.-W. Schulze, and N.~Tarkhanov.
\newblock {$C^*$}-algebras of singular integral operators in domains with
  oscillating conical singularities.
\newblock {\em Manuscripta Math.}, 108(1):69--90, 2002.

\bibitem{RochBookLimit}
Vladimir Rabinovich, Steffen Roch, and Bernd Silbermann.
\newblock {\em Limit operators and their applications in operator theory},
  volume 150 of {\em Operator Theory: Advances and Applications}.
\newblock Birkh\"auser Verlag, Basel, 2004.

\bibitem{renaultBook}
Jean Renault.
\newblock {\em A groupoid approach to {$C^{\ast} $}-algebras}, volume 793 of
  {\em Lecture Notes in Mathematics}.
\newblock Springer, Berlin, 1980.

\bibitem{Renault87}
Jean Renault.
\newblock Repr\'esentation des produits crois\'es d'alg\`ebres de groupo\"\i
  des.
\newblock {\em J. Operator Theory}, 18(1):67--97, 1987.

\bibitem{renault91}
Jean Renault.
\newblock The ideal structure of groupoid crossed product {$C^\ast$}-algebras.
\newblock {\em J. Operator Theory}, 25(1):3--36, 1991.
\newblock With an appendix by Georges Skandalis.

\bibitem{Roch}
Steffen Roch.
\newblock Algebras of approximation sequences: structure of fractal algebras.
\newblock In {\em Singular integral operators, factorization and applications},
  volume 142 of {\em Oper. Theory Adv. Appl.}, pages 287--310. Birkh\"auser,
  Basel, 2003.

\bibitem{RochBookNGT}
Steffen Roch, Pedro~A. Santos, and Bernd Silbermann.
\newblock {\em Non-commutative {G}elfand theories}.
\newblock Universitext. Springer-Verlag London, Ltd., London, 2011.
\newblock A tool-kit for operator theorists and numerical analysts.

\bibitem{Rochon}
Fr{\'e}d{\'e}ric Rochon.
\newblock Pseudodifferential operators on manifolds with foliated boundaries.
\newblock {\em J. Funct. Anal.}, 262(3):1309--1362, 2012.

\bibitem{Rodsphon}
Rudy Rodsphon.
\newblock Zeta functions, excision in cyclic cohomology and index problems.
\newblock http://arxiv.org/abs/1309.2536.

\bibitem{Rordam}
M.~R{\o}rdam, F.~Larsen, and N.~Laustsen.
\newblock {\em An introduction to {$K$}-theory for {$C^*$}-algebras}, volume~49
  of {\em London Mathematical Society Student Texts}.
\newblock Cambridge University Press, Cambridge, 2000.

\bibitem{CarilloLescureMonthubert}
P.~Carrillo Rouse, J.~M. Lescure, and B.~Monthubert.
\newblock A cohomological formula for the {A}tiyah-{P}atodi-{S}inger index on
  manifolds with boundary.
\newblock {\em J. Topol. Anal.}, 6(1):27--74, 2014.

\bibitem{SchroheFrechet}
Elmar Schrohe.
\newblock Fr\'echet algebra techniques for boundary value problems on
  noncompact manifolds: {F}redholm criteria and functional calculus via
  spectral invariance.
\newblock {\em Math. Nachr.}, 199:145--185, 1999.

\bibitem{SchSch1}
Elmar Schrohe and Bert-Wolfgang Schulze.
\newblock Boundary value problems in {B}outet de {M}onvel's algebra for
  manifolds with conical singularities. {I}.
\newblock In {\em Pseudo-differential calculus and mathematical physics},
  volume~5 of {\em Math. Top.}, pages 97--209. Akademie Verlag, Berlin, 1994.

\bibitem{SchulzeBook91}
B.-W. Schulze.
\newblock {\em Pseudo-differential operators on manifolds with singularities},
  volume~24 of {\em Studies in Mathematics and its Applications}.
\newblock North-Holland Publishing Co., Amsterdam, 1991.

\bibitem{Seeley59}
R.~T. Seeley.
\newblock Singular integrals on compact manifolds.
\newblock {\em Amer. J. Math.}, 81:658--690, 1959.

\bibitem{Seeley63}
R.~T. Seeley.
\newblock The index of elliptic systems of singular integral operators.
\newblock {\em J. Math. Anal. Appl.}, 7:289--309, 1963.

\bibitem{Shreve}
S.~Shreve.
\newblock {\em Stochastic calculus for finance. {II}}.
\newblock Springer Finance. Springer-Verlag, New York, 2004.
\newblock Continuous-time models.

\bibitem{ShubinAsterisque}
M.~A. Shubin.
\newblock Spectral theory of elliptic operators on noncompact manifolds.
\newblock {\em Ast\'erisque}, 207:5, 35--108, 1992.
\newblock M\'ethodes semi-classiques, Vol.\ 1 (Nantes, 1991).

\bibitem{simanca}
S.~R. Simanca.
\newblock {\em Pseudo-differential operators}, volume 171 of {\em Pitman
  research notes in mathematics}.
\newblock Longman Scientific \& Technical, Harlow, Essex, 1990.

\bibitem{BKSo2}
Bing~Kwan So.
\newblock On the full calculus of pseudo-differential operators on boundary
  groupoids with polynomial growth.
\newblock {\em Adv. Math.}, 237:1--32, 2013.

\bibitem{BKSo1}
Bing~Kwan So.
\newblock Exponential coordinates and regularity of groupoid heat kernels.
\newblock {\em Cent. Eur. J. Math.}, 12(2):284--297, 2014.

\bibitem{Troitsky}
Yu.~P. Solovyov and E.~V. Troitsky.
\newblock {\em {$C^*$}-algebras and elliptic operators in differential
  topology}, volume 192 of {\em Translations of Mathematical Monographs}.
\newblock American Mathematical Society, Providence, RI, 2001.
\newblock Translated from the 1996 Russian original by Troitsky, Translation
  edited by A. B. Sossinskii.

\bibitem{Tsygan}
B.~L. Tsygan.
\newblock Homology of matrix {L}ie algebras over rings and {H}ochschild
  homology.
\newblock {\em Uspekhi Math. Nauk.}, 38:217--218, 1983.

\bibitem{TuNonH}
Jean-Louis Tu.
\newblock Non-{H}ausdorff groupoids, proper actions and {$K$}-theory.
\newblock {\em Doc. Math.}, 9:565--597 (electronic), 2004.

\bibitem{ErpWilliams}
Erik van Erp and Dana~P. Williams.
\newblock Groupoid crossed products of continuous-trace {$C^\ast$}-algebras.
\newblock {\em J. Operator Theory}, 72(2):557--576, 2014.

\bibitem{Vassout}
St{\'e}phane Vassout.
\newblock Unbounded pseudodifferential calculus on {L}ie groupoids.
\newblock {\em J. Funct. Anal.}, 236(1):161--200, 2006.

\bibitem{Voiculescu}
D.~V. Voiculescu, K.~J. Dykema, and A.~Nica.
\newblock {\em Free random variables}, volume~1 of {\em CRM Monograph Series}.
\newblock American Mathematical Society, Providence, RI, 1992.
\newblock A noncommutative probability approach to free products with
  applications to random matrices, operator algebras and harmonic analysis on
  free groups.

\bibitem{WeggeOlsen}
N.~E. Wegge-Olsen.
\newblock {\em K-theory and {$C^*$}-algebras}.
\newblock Oxford University Press, 1993.

\end{thebibliography}
\end{document}